\newtheorem{theorem}{Theorem}[section]
\newtheorem{proposition}[theorem]{Proposition}
\newtheorem{corollary}[theorem]{Corollary}
\newtheorem{cor}[theorem]{Corollary}
\newtheorem{prop}[theorem]{Proposition}
\newtheorem{notation}[theorem]{Notation}
\newtheorem{example}[theorem]{Example}
\newtheorem{remark}[theorem]{Remark}
\newtheorem{lemma}[theorem]{Lemma}
\newtheorem{definition}[theorem]{Definition}
\newenvironment{defn}{\begin{definition}\rm}{\end{definition}}
\newenvironment{notn}{\begin{notation}\rm}{\end{notation}}
\newenvironment{ex}{\begin{example}\rm}{\end{example}}
\newcommand{\R}{\mathbb{R}}
\newcommand{\N}{\mathbb{N}}
\newcommand{\vv}{\mathcal{RV}}
\newcommand{\proj}{\mathrm{Proj}}
\newcommand{\m}{\mathfrak{m}}
\newcommand{\fa}{\mathfrak{a}}
\newcommand{\fb}{\mathfrak{b}}
\newcommand{\n}{\mathfrak{n}}
\newcommand{\q}{\mathfrak{q}}
\newcommand{\p}{\mathfrak{p}}
\newcommand{\II}{\overline{I}}
\newcommand{\IC}{\mathbb{C}}
\newcommand{\IA}{\mathbb{A}}
\newcommand{\cO}{\mathcal O}
\newcommand{\ord}{\mathrm{ord}}
\newcommand{\spec}{\operatorname{Spec}}
\newcommand{\reg}{\operatorname{Reg}}
\newcommand{\vvar}{\mathrm{Var}}
\newcommand{\xvec}[1]{\ensuremath{x_{1}, \ldots, x_{#1}}}
\newcommand{\Projan}{\operatorname{Projan}}
\newcommand{\Specan}{\operatorname{Specan}}
\begin{document}
\title{Weak subintegral closure of ideals}
\author{Terence Gaffney}
\author{Marie A. Vitulli}

\address{Department of Mathematics, Northeastern University, 567 Lake Hall, 360 Huntington Ave, Boston, MA 02115-5095} 
\email{gaff@research.neu.edu}

\address{Department of Mathematics, 1222-University of Oregon, Eugene, OR  97403-1222}
\email{vitulli@uoregon.edu}

\thanks{The authors would like to thank the AMS, Caroline Grant Melles, and the late Ruth Michler for sponsoring and organizing the special session on Singularities in Algebraic and Analytic Geometry in San Antonio (1999) that brought them together in this project. They would also like to thank Craig Huneke for helpful conversations.}
\date{September 12, 2008}
\subjclass[2000]{Primary 13B22, 32C20; Secondary 13F45, 13A30, 14M05}
\maketitle

\begin{abstract} We describe some basic facts about the weak subintegral closure of ideals  in both the algebraic and complex-analytic settings.  We focus on the analogy between  results on the integral closure of ideals and modules and the weak subintegral closure of an ideal.  We start by giving a geometric interpretation of the Reid-Roberts-Singh criterion for when an element is weakly subintegral over a subring. We give new characterizations of the weak subintegral closure of an ideal.  We associate with an ideal $I$ of a ring $A$  an ideal $I_>$, which consists of all elements of $A$ such that $v(a)>v(I)$, for all  Rees valuations $v$ of $I$. The ideal $I_>$ plays an important role in conditions from stratification theory such as Whitney's condition A and Thom's condition $A_f$ and  is contained in every reduction of $I$.  We close with a valuative criterion for when an element is in the weak subintegral closure of an ideal.  For this, we introduce a new closure operation for a pair of modules, which we call relative closure.

\end{abstract}

\section{Introduction}
The purpose of this paper
is to describe some basic facts about the weak subintegral closure of ideals  in both the algebraic and complex-analytic settings.  We focus on the analogy between  results on the integral closure of ideals and modules and the weak subintegral closure of an ideal. 

Since we are interested in the characteristic zero case, we blur the distinction between the operations of weak normalization and seminormalization and the related conditions.

We first sketch a brief history of our subject.  In 1967 the operation of \textit{weak normalization} was introduced in the complex analytic setting  by Andreotti and Norguet in \cite{AN}.  An analytic space is \textit{weakly normal} (that is, equal to its weak normalization) if every continuous complex-valued function that is holomorphic off the singular locus is globally holomorphic. In 1969 the weak normalization of an abstract scheme was studied by Andreotti and Bombieri \cite{AB}.   Traverso \cite{Tr} introduced the
operation of \textit{seminormalization} for integral ring extensions the following year and showed that the seminormalization of
a ring $A$ in an integral extension $B$ is the largest subring whose prime spectrum is in bijective correspondence with Spec($A$) and with isomorphic residue fields.   Traverso then looked at Noetherian reduced rings and in that context defined a seminormal ring as one
that equals its seminormalization in its normalization.  Traverso's construction of the seminormalization was local in nature and  involved ``gluing" over the various primes ideals of $B$ that lie over a single prime ideal of $A$.   Hamann \cite{Ha} later demonstrated that $A$ is seminormal in $B$ if and only if every element $b \in B$ such that $b^2, b^3 \in A$ is actually in $A$. Traverso showed that with the
   additional assumption that the normalization is finite, then the canonical homomorphism
    $\mathrm{Pic } \, A \rightarrow \mathrm{Pic }  \, A[T] $ is an isomorphism if and only if $A$ is seminormal.
     A decade after Traverso first introduced the operation of seminormalization,
        Swan \cite{Sw} called an extension of rings $A\subset B$ \textit{subintegral }if $B$ is
         integral over $A$ and the inclusion induces a bijection of prime spectrums and
          isomorphisms of the residue fields.   Swan made a small but significant modification of Hamann's
           characterization of seminormal rings by declaring that a reduced ring $A$ is seminormal
            in an extension ring $B$ if  whenever $b, c  \in B$ with $b^3 = c^2$, there exists an
             element $a \in A$ such that $a^2 = b \mbox{ and } a^3 = c$.   For a reduced ring
              whose quotient ring is a product of fields, the two notions agree, but don't in
              general. Swan was able to show that, in general,  the homomorphism $\mathrm{Pic } \, A \rightarrow \mathrm{Pic }  \, A[T] $ is an
     isomorphism if and only if $A_{\mathrm{red}}$ is seminormal.  Swan also constructed the
     seminormalization of an arbitrary reduced ring by mimicking the construction of the
     algebraic closure of a field.

 The notions of weak subintegral closure and weak normalization are closely related.  An extension of rings $A\subset B$ is \textit{weakly subintegral }if $B$ is
integral over $A$, the inclusion induces a bijection of prime spectrums, and
purely inseparable extensions of the residue fields. An element $b$ in $B$ is weakly subintegral over $A$ if $A \subset A[b]$ is a weakly subintegral extension of rings. The weak normalization of $A$ in $B$ is the largest weakly subintegral extension of $A$ in $B$. In these notes we work over characteristic zero exclusively.  In this context, the seminormalization of $A$ in $B$ coincides with the weak normalization of $A$ in $B$.
We will use the latter terminology in the subsequent sections of this paper.

Now we consider the weak subintegral closure, ${}^*I$, of an ideal $I$. We will use the definition proposed by Vitulli and Leahy \cite{VL}. This is described in detail in section 2. For now, we note an important link between weak normalization of a graded ring and the weak subintegral closure of an ideal. Suppose that $A \subseteq B$ are rings, $I$ is an ideal in $A$, and $b \in B$. Then, $b$ is weakly subintegral over $I$ if and only if the element $bt \in B[t]$ is weakly subintegral over the Rees ring $A[It]$ ( \cite[Lemma 3.2]{VL}, for $n=1$). This connection is parallel to a connection in the theory of integral closure of ideals. If $A$ is a ring, $I$ is an ideal in $A$, $b\in A$ is integral over $I$ if and only if $bt\in A[t]$ is integral over the Rees ring $A[It]$. This relationship suggests developing other parts of the theory of the weak subintegral closure of an ideal in parallel with the theory of the integral closure of an ideal. Indeed, the parallelism can be taken further and applied to modules as well.  This is the starting point for our paper.

In section 2 we describe a characterization due to Reid, Roberts and Singh (\cite[Condition 1.3] {RRS}. They gave a criterion  for an element $b\in B$ to be an element
of the weak normalization of $A$ similar to the
criterion for $b$ to be in the normalization.   We give a geometric interpretation of their criterion in Propositions \ref{alg branched cover} and \ref{anal branched cover}, which explain the form taken by the equations appearing in their criterion. Roughly speaking, a rational function is in the integral closure of the local ring of a variety if and only if its graph can be embedded in a branched cover of the variety. Proposition \ref{anal branched cover}  shows that the function is in the weak normalization if the graph embeds in the part of the cover in which ``all the sheets come together."

A basic result in the theory of integral closure of ideals is that $h \in \overline{ I}$ if and only if the pullback of $h$ to the normalized blow-up by $I$ is in the pull back of $I$. We pursue this approach in section 3. In Theorem \ref{local anal char of *I}, we prove the analogous statement for weak subintegral closure for local analytic rings and in Theorem \ref{local alg char of *I}  we prove the algebraic version.

In section 4 we relate the weak subintegral closure of an ideal $I\subset A$ to the ideal $I_{>}$, which consists of all elements of $A$ such that $v(a)>v(I)$, for all  Rees valuations $v$ of $I$. The ideal $I_{>}$ plays an important role in conditions from stratification theory such as Whitney's condition A and Thom's condition $A_f$.  We show that  $I_{>}\subset {}^*I$ (Proposition \ref{prop:lantz}), and that any reduction $J$ of $I$ contains $I_{>}$ as well (Cor. \ref{cor:lantz}). Proposition \ref{prop:lantz} proves a generalization of a conjecture of Lanz.

If we restrict to 0-dimensional ideals $I$ in a Noetherian local ring and assume that the residue field is algebraically closed of characteristic 0, we can say more. If $J$ is a minimal reduction of $I$, then $J+I_{>}={}^*J$ (Theorem \ref{thm:weaknor of min red}), and if $I$ is generated by the minimal number of generators, then for every reduction $J$, we have ${}^*J=I$, otherwise, the set of all elements of $A$ weakly subintegral over all reductions $J$ of $I$ is precisely ${}^*I$ (Cor. \ref{ cor: *J = I}).
If $I$ is $\m$-primary
then the elements of $I_{>}$ are also known as elements of $A$ which are strictly dependent on $I$ and are denoted $I^{\dagger}$ (see section 3 of \cite{GK} or section 5 of \cite{GG}). 
There are valuative criteria for both the elements of $I$ and $I^{\dagger}$. In section 5 we develop such a criterion for the elements of ${}^*I$. The criterion is based on a new closure operation, relative closure (Definition \ref{rel closure anal} in the analytic case and \ref{rel closure alg} in the algebraic case). This closure operation is defined by a valuative criterion. Because weak subintegral dependence is connected with proving a projection is a homeomorphism, our analytic criterion is based on map germs from $\IC$ to $X \times X$, and even in the ideal case we are led to use pairs of modules in the relative closure operation.  The criterion is contained in Theorem \ref{al crit anal}. In turn, the algebraic criterion is based on local homomorphism of $\IC$-algebras from $A \otimes _{\IC} A \to \IC[[z]]$, where $A = \cO_{X,x}$.  In this setting, the field $k$ can be replaced by any algebraically closed field $k$.  Another, interesting approach to a valuative criterion is being developed by Holger Brenner (private correspondence).

As this introduction shows, the development of this paper is based on both the geometric/analytic and algebraic points of view. In writing the paper we have tried to incorporate both perspectives, as they each give valuable insight into the subject. We hope this style of writing will also make the contributions of each community more available to the other.

\section{Preliminaries}
    In this section we recall and build on the element-wise definition of weak subintegrality introduced by Reid, Roberts, and Singh  (\cite[Condition 1.3] {RRS}).  We present a geometric interpretation of their definition that gives additional insight into why such a system of equations occurs.  We finish the section by recalling the element-wise definition of weak subintegrality over an ideal introduced by Vitulli and Leahy in \cite{VL}.

Recall that an integral extension of rings $A \subset B$ is weakly subintegral if the induced map $\spec(B) \to \spec(A)$ is a bijection with purely inseparable extensions of the residue fields.  Note that if we work with finitely-generated reduced algebras over $\mathbb{C}$ (or any algebraically closed field of characteristic 0) or with the local ring of a complex-analytic space, 
it suffices to require that the induced map of prime spectrums is a bijection. 
In this case,  for each  $Q\ \in \spec( B)$,  the induced map from $\spec(B/Q)$ to $\spec(A/(Q \cap A))$ must be a bijection. 
Recall that the degree of the quotient field of $B/Q$ over the quotient field of $A/(Q\cap A)$ is equal to the number of preimages of a general point of 
Spec($A/(Q \cap A)$)
( cf. \cite[Proposition 3.17]{M}   in the algebraic case; in the analytic case when this number is 1
this is a corollary of the existence of universal denominators, e.g., see  \cite[Theorem 21]{Gu}).  It follows that if the number of generic preimages is 1, 
then the inclusions are isomorphisms, provided that $\dim V(Q)>0$.  Of course, if $\spec(B/Q)$ 
is $0$-dimensional then $Q$ must be a maximal ideal and the residue fields are just $\mathbb{C}$. So 
 the map still induces an isomorphism.

Let's return to the general situation. Given an extension of rings $A \subset B$, the weak normalization ${}^*_B A$ of  $A$ in $B$ is defined by
%: def of *A
\begin{equation}{}^*_B A = \{ b \in B \mid \forall \p \in \spec(A), \; \exists n \ge 0 \mbox{ such that } (b/1)^{e^n} \in A_{\p} + R(B_{\p}) \},
\end{equation}
where $R(B_{\p})$ is the Jacobson radical of $B_{\p}$ and $e$ is the characteristic exponent of the residue class field $\kappa(\p)$ of $A_{\p}$.  One can show that ${}^*_B A$ is  the set of all elements $b\in B$ such that $A\subset A[b]$ is a weakly subintegral extension. If $B$ is the normalization of the reduced Noetherian ring $A$, we refer to the weak normalization of  $A$ in $B$ as the weak normalization of  $A$ and denote it by ${}^*A$.

Reid, Roberts and Singh (\cite[Condition 1.3] {RRS} gave a criterion for an element $b\in B$ to be an element 
of the weak normalization of $A$ similar to the 
criterion for $b$ to be in the normalization.   We now recall their definition.

An element $b \in B$ is said to be
{\it weakly  subintegral over  A
} provided that there exist a nonnegative integer
 $q $ and  elements $a_i\in A \hskip .1cm (1\le{i}\le 2q+1$)
such that $b$ satisfies the equations
\begin{equation} \label{RRSeqns}
T^n+\sum_{i=1}^{n} {n \choose i} a_iT^{n-i}=0\quad (q+1\le n \le 2q+1).
\end{equation}

In the original characterization of weakly subintegral elements, a factor of $(-1)^i$ accompanied the coefficient $a_i$, but here we absorb that factor in $a_i$. Reid, Roberts, and Singh proved that $b \in B$ is weakly subintegral over $A$ if and only if $A \subseteq A[b]$ is a weakly subintegral extension (see \cite[Theorem 6.11] {RRS}).

There is an interesting geometric interpretation of this definition which seems to be new. It  provides some insight into the appearance of the string of equations (\ref{RRSeqns}).  We first make some general observations and establish some helpful notation.

For a polynomial $F(X,T) \in k[X_1, \ldots , X_m, T]$ that is monic in $T$ of degree $N$, let $Z(F) \subset \IA ^{m+1}$ denote the zeroes of $F$ and $ZZ(F) \subset \IA ^{m+1}$ denote the common zeroes of $F$ and its first $\ell = \lfloor \frac{N}{2} \rfloor$ derivatives with respect to $T$.  

%: lemma lem zeros
\begin{lemma} \label{lem zeros}
 Let $k$ be an algebraically closed field $k$ of characteristic 0,
 $F = F(X,T)$ be a polynomial in  $k[X_1, \ldots , X_m, T]$,
monic in $T$, $Z(F)$  and $ZZ(F) \subset Z(F)$ be as above.  Then, the restriction of the projection $p \colon \IA^{m+1} \to \IA^m $ onto the first $m$ factors to $ZZ(F)$ is a homeomorphism onto its image, which is closed in $\IA ^m$.
\end{lemma}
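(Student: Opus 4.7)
The plan is to establish two claims: (i) the restriction $p|_{ZZ(F)}$ is injective, and (ii) the restriction $p|_{Z(F)}$ is a closed map (in the Zariski topology). Together with the observation that $ZZ(F)$ is Zariski closed in $Z(F)$, these give that $p|_{ZZ(F)}$ is a continuous closed injection, hence a homeomorphism onto its image, and the image $p(ZZ(F))$ is closed in $\IA^m$.

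For injectivity I will use the standard interpretation of $ZZ(F)$: a point $(x_0, t_0)\in \IA^{m+1}$ lies in $ZZ(F)$ iff $t_0$ is a root of the univariate polynomial $F(x_0, T)\in k[T]$ of multiplicity at least $\ell+1$. In characteristic $0$ this is equivalent to $F(x_0,T)$ together with its first $\ell$ $T$-derivatives all vanishing at $t_0$, which is exactly the defining condition for $ZZ(F)$. Since $F(x_0, T)$ has degree $N$ and $\ell=\lfloor N/2\rfloor$, one has $2(\ell+1)\ge N+1>N$, so $F(x_0,T)$ cannot admit two distinct roots each of multiplicity $\ge \ell+1$. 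Hence each fiber of $p|_{ZZ(F)}$ has at most one point.

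For closedness I will use that $F$ is monic in $T$ of degree $N$, so the coordinate ring $k[X_1,\ldots,X_m,T]/(F)$ is a free $k[X_1,\ldots,X_m]$-module of rank $N$. Therefore $p|_{Z(F)}:Z(F)\to \IA^m$ is a finite (in particular, proper) morphism, and thus a closed map. Composing with the closed inclusion $ZZ(F)\hookrightarrow Z(F)$ shows that $p|_{ZZ(F)}:ZZ(F)\to \IA^m$ is closed; in particular its image is closed in $\IA^m$. A continuous closed injection onto its image is automatically a homeomorphism onto its image, which completes the proof.

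The only substantive ingredient is the multiplicity/degree count producing injectivity; the closedness is a standard consequence of monicity and finiteness. I do not anticipate a real obstacle here, but the multiplicity interpretation of the defining equations of $ZZ(F)$ (and the corresponding use of $\field k = 0$) should be stated explicitly, since it is precisely what makes the choice $\ell=\lfloor N/2\rfloor$ just large enough to force uniqueness in each fiber.
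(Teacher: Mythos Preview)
Your proof is correct and follows essentially the same approach as the paper: the paper also identifies $ZZ(F)$ with the locus of roots of multiplicity at least $\ell+1$, uses the degree count $2(\ell+1)>N$ to get injectivity over each fiber, and invokes closedness of $p|_{Z(F)}$ to conclude. You are simply a bit more explicit about why monicity yields a finite (hence closed) morphism and why a continuous closed injection is a homeomorphism onto its image.
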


\begin{proof}
 Notice that for any $x \in \IA^m$ the equation  $F(x,T) = 0$ has at most one root in $k$ of multiplicity at least $\ell + 1$.
 For if $(x, t_1) \ne(x,t_2)$ are both such, then $(T-t_1)^{\ell + 1}(T-t_2)^{\ell + 1}$ divides $F(x,T)$, which is absurd, since $2(\ell + 1) > N$.  Since $(x,t) \in ZZ(F) \Leftrightarrow t$ is a root of $F(x,T)$ of multiplicity at least $\ell + 1$  the restriction of $p$  to $ZZ(F)$ is a homeomorphism onto its image.  Since the restriction of the projection to $Z(F)$ is a closed mapping, the image of $p(ZZ(F))$ is closed in $\IA ^m$.
\end{proof}

Let $k$ be an algebraically closed field $k$ of characteristic 0. Suppose $A = k[V]$ is the affine coordinate ring of an affine variety $V \subset \IA^m$ and $h \in {}^*A$.  First consider $h$ as a regular function on the normalization $\widetilde{V}$  of $V$.  Since $h$ is constant on the fibers of the projection $\pi \colon \widetilde{V} \to V$ we may regard $h$ as a continuous $k$-valued function on $V$.
Returning to the equations (\ref{RRSeqns}) of Reid, Roberts, and Singh, notice that the equation of degree $2q$ in the system is just an integer multiple of the derivative of the equation of degree $2q+1$ in the $T$ variable,
 and in fact the equation of degree $n$ is just an integer multiple of the derivative of order $2q+1-n$ of the top degree equation. This relationship was observed by L. G. Roberts in the proof of Lemma 4.1 of \cite{Rob}, but wasn't taken any further.  Suppose $h$ satisfies this system of equations. 
 Let $f_i \in k[X_1, \ldots , X_m]$ be a representative of $a_i \in A \; (i = 1, \ldots ,2q+1)$ and set $F(X,T) = T^{2q+1} + \sum_{i=1}^{2q+1} {2q+1 \choose i} f_i(X)T^{2q+1-i}$.  Applying Lemma \ref{lem zeros} to the hypersurface $Y = Z(F) \subset \IA ^{m+1}$ we see that $(x,h(x)) \in ZZ(F)$ and hence $F(x, T) = (T - h(x))^{q+1}G(x,T)$, where $G(x,T) \in k[T]$;  this holds for each $x \in V$.   Letting $\Gamma_h$ denote the graph of $h$, we have $p^{-1}(V) \cap ZZ(F) = \Gamma_h$,  where $p \colon \IA^{m+1} \to \IA ^m$ is the projection onto the first $m$ factors.

 \bigskip
 
 We would now like to give a geometric characterization of when a function on an affine algebraic variety $V$  becomes a regular function on its weak normalization of $V$.  We work over an algebraically closed field $k$ of characteristic 0.  There are (at least) two ways to approach this: one can consider a $k$-valued function defined globally on $V$ or a regular function on the normalization of $V$ considered as a rational function on $V$.  We take the latter approach.   In \cite{Vit} the second author characterized the $k$-valued functions on an affine variety $V$ without one-dimensional components that become regular on the weak normalization as those functions satisfying two conditions: every polynomial in $h$ with coefficients in the affine coordinate ring of $V$ is continuous (w.r.t the Zariski topology) and the graph of $h$ is closed in $V \times \IA$.  The latter condition plays a key role in our new characterization.
 
  For a rational function $h$ on $V$ let $\reg(h)$ denote the set of points where $h$ is regular and let 
  $\Gamma_h \subset \reg(h) \times \IA^1$ denote the graph of $h: \reg(h) \rightarrow k$.  Let $p \colon \IA ^{m+1} \to \IA ^m$  the projection onto the first $m$ factors.

%: prop   branched cover algebraic version
\begin{prop}  \label{alg branched cover} Let $k$ be an algebraically closed field of characteristic 0, $V \subset \IA^m$  an 
affine variety with affine coordinate ring $A$, and let $h$ be in the normalization of $A$. Then, $h$ is in the weak normalization of $A$ if and only if there exists a polynomial $F(X,T)$ in $k[X_1, \ldots , X_m, T]$ that is monic in  $T$ such that 
$$\Gamma_h  \subset p^{-1}(V) \cap ZZ(F)  .$$
\end{prop}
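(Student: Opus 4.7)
The plan is to prove the two directions separately, noting that the forward direction is essentially laid out in the discussion that precedes the proposition, so the substantive work is the reverse direction. In both directions, the key mechanism is the observation (due to L.~G.~Roberts) that the lower-degree equations in \eqref{RRSeqns} are positive rational scalar multiples of the successive $T$-derivatives of the top-degree equation, so the RRS system is equivalent to $F$ together with its first $q$ partial derivatives in $T$ vanishing at $(x, h(x))$, which is precisely membership in $ZZ(F)$.

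For the forward direction: assume $h \in {}^*A$. By Reid--Roberts--Singh there exist $a_1,\dots,a_{2q+1} \in A$ satisfying \eqref{RRSeqns}. I would lift each $a_i$ to $f_i \in k[X_1,\dots,X_m]$ and set
\[
F(X,T)=T^{2q+1}+\sum_{i=1}^{2q+1}\binom{2q+1}{i} f_i(X)\,T^{2q+1-i}.
\]
Since the equations of degrees $q+1\le n\le 2q+1$ are (up to nonzero rational scalars) $F$ and its first $q$ derivatives in $T$, for each $x\in\reg(h)$ the point $(x,h(x))$ annihilates them all and hence lies in $ZZ(F)$; this gives $\Gamma_h \subset p^{-1}(V)\cap ZZ(F)$.

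For the reverse direction: given $F$ monic in $T$ with $\Gamma_h\subset p^{-1}(V)\cap ZZ(F)$, I first want $\deg_T F$ odd. If $\deg_T F=2q$ is even, replace $F$ by $T\cdot F$; this is still monic in $T$, has degree $2q+1$, and the containment is preserved because $(T-h(x))^{q+1}$ already divides $F(x,T)$ (by $\Gamma_h\subset ZZ(F)$ and $\lfloor 2q/2\rfloor = q$), hence also $T\cdot F(x,T)$, and $\lfloor(2q+1)/2\rfloor = q$. Writing $\deg_T F=2q+1$, using $\mathrm{char}\,k=0$ I rescale the coefficients to present $F = T^{2q+1}+\sum_{i=1}^{2q+1}\binom{2q+1}{i}f_i(X)\,T^{2q+1-i}$ and let $a_i\in A$ be the image of $f_i$. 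A direct computation shows that for $0\le j\le q$, the polynomial $\frac{(2q+1-j)!}{(2q+1)!}\partial_T^j F$ is precisely the monic equation of degree $2q+1-j$ appearing in \eqref{RRSeqns}. The hypothesis $\Gamma_h\subset ZZ(F)$ forces these to vanish at $(x,h(x))$ for every $x\in\reg(h)$. Viewing the corresponding elements $h^n+\sum_{i=1}^n\binom{n}{i}a_i h^{n-i}$ inside the normalization $\widetilde{A}$ as regular functions on $\widetilde{V}$, and using that on $\pi^{-1}(\reg(h))$ the function $h$ factors through $\pi$ (so $h(y)=h(\pi(y))$ there), I conclude these elements vanish on the dense open $\pi^{-1}(\reg(h))\subset \widetilde{V}$; since $V$ is irreducible, $\widetilde{A}$ is a domain, so the identities hold in $\widetilde{A}$ and Reid--Roberts--Singh gives $h\in {}^*A$.

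The main obstacles are the two small technical points in the reverse direction: first, the parity adjustment of $\deg_T F$, which boils down to a divisibility check after multiplying by $T$; and second, the passage from pointwise vanishing on $\reg(h)$ to an algebraic identity in $\widetilde{A}$, handled by density together with irreducibility of $V$. Everything else is the bookkeeping translation between $F$ together with its scaled $T$-derivatives and the RRS form \eqref{RRSeqns}, which the text has already foreshadowed.
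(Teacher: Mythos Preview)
Your proof is correct, and the forward direction is essentially identical to the paper's. The reverse direction, however, takes a genuinely different route.

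The paper does \emph{not} adjust the parity of $\deg_T F$ or rewrite $F$ in RRS form. Instead, working with an arbitrary monic $F$ of degree $N$, it passes to the normalization $\pi\colon W\to V$, observes (as you do) that the first $\ell=\lfloor N/2\rfloor$ derivatives of the lifted polynomial $G(T)$ vanish on the dense open $\pi^{-1}(\reg(h))$ and hence identically on $W$, and then argues directly that $h$ is constant on the fibers of $\pi$: if $h(y_1)\ne h(y_2)$ for $y_1,y_2$ over the same $x$, then $(T-h(y_1))^{\ell+1}(T-h(y_2))^{\ell+1}$ would divide a monic polynomial of degree $N<2(\ell+1)$, a contradiction. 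The conclusion $h\in{}^*A$ then follows from the fiber-constancy characterization of the weak normalization in characteristic~$0$ over an algebraically closed field (recalled earlier in Section~2), rather than from the RRS equations.

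So the difference is in which characterization of ${}^*A$ is invoked at the end. Your approach stays entirely within the equational RRS framework and is self-contained once RRS is granted, at the cost of the parity trick and the coefficient bookkeeping. The paper's approach avoids that bookkeeping and handles any degree $N$ uniformly via the divisibility argument of Lemma~\ref{lem zeros}, but it leans on the geometric description of ${}^*A$ as the elements of $\overline{A}$ constant on fibers of the normalization map.
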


\begin{proof}  
Assume that $h \in {}^*A$.  Suppose $q \ge 0$ and $a_i \in A \; (1 \le i \le 2q+1)$ are such that $h$ satisfies the resulting equations (\ref{RRSeqns}).  Let $f_i \in k[X_1, \ldots , X_m]$ be a preimage of $a_i \in A$ for $(i = 1, \dots , 2q+1)$ and set $F(X,T) = T^{2q+1} + \sum_{i=1}^{2q+1} {2q+1 \choose i} f_i(X) T^{2q+1-i} $. As remarked earlier, we must have $p: Z(F) \rightarrow \IA ^m$ is a surjective finite morphism such that $p^{-1}(V) \cap ZZ(F) = \Gamma_h$. 

Conversely assume that there exists a polynomial $F = F(X,T) \in k[X_1, \ldots , X_m, T]$ that is 
monic in  $T$ such that $$ \Gamma_h \subset p^{-1}(V) \cap ZZ(F)  .$$
Write  $F(X,T) = T^N + \sum_{i=1}^{N} f_i(X)T^{N-i}  \in k[X_1, \ldots , X_m, T]$. 

Then,
$$\frac{\partial ^j F}{\partial T^j} (x, h(x)) = 0  \; \; (j = 0, \ldots , \lfloor \frac{N}{2} \rfloor ),$$
for all $x \in \reg (h)$.    Let $\pi \colon W = \mathrm{Var}( B) \to V$ be the normalization of $V$ and consider  
$$G(T):= T^N + \sum_{i=1}^N (a_i \circ \pi) T^{N-i} \in B[T].$$
 Since 
 $$\frac{d^j G}{dT^j}(h)=0 \; 
 (j=0, \ldots , \lfloor \frac{N}{2} \rfloor)$$ 
 on $ \pi ^{-1}(\reg (h))$ these derivatives must be identically 0 on $W$.  Just suppose that $h(y_1) \ne h(y_2)$ for some $y_1, y_2$ lying over the same point $x \in V$. Letting $\ell =\lfloor \frac{N}{2} \rfloor$ we see that  
 $$(T - h(y_1))^{\ell + 1}(T-h(y_2))^{\ell + 1} \mid T^N + \sum_{i=1}^N a_i (x) T^{N-i},$$
 which is impossible. We may conclude that $h$ is constant on the fibers of $\pi$ and hence is in the weak normalization of $A$. 
\end{proof}

Note that if an element $h$ in the normalization of $A$ satisfies $\Gamma_h \subset p^{-1}(V) \cap ZZ(F)$ as above and  therefore $h \in {}^*A$, we may regard $h$ as a globally defined continuous function on $V$.  If we now let $\Gamma_h$ denote the graph of this globally defined function, then $\Gamma_h = p^{-1}(V) \cap ZZ(F).$

If instead of assuming that $h \in {}^*A$ and that $\Gamma_h \subset p^{-1}(V) \cap ZZ(F)$ in the hypotheses of the above theorem we assume that $h$ is a $k$-valued function on $V$ and that $\Gamma_h = p^{-1}(V) \cap ZZ(F)$, we again get a characterization of the elements of ${}^*A$.

 For a meromorphic function $h$ on complex analytic space $V$ let $\reg(h)$ denote the set of points where $h$ is holomorphic and let $\Gamma_h \subset \reg(h) \times \IC$ denote the graph of the function $h: \reg(h) \rightarrow \IC$. Suppose that $Y  \subset \IC^{m+1}$  is a complex analytic space   such that the projection onto the first $m$ factors $p: Y \rightarrow \IC^m$ is a surjective finite morphism.  Think of $Y$ as
 a branched cover of $\IC ^m$. 
 If the degree of the cover is $N$, then as the analysis before Proposition \ref{alg branched cover} shows, the restriction of the projection to the set of points where $\ell = \lfloor \frac{N}{2} \rfloor$ sheets come together is a homeomorphism;  let $Y_0$ be this locus.  Let $T$ denote the new coordinate on 
 $\IC^{m+1}$. We will use this notation below.

 We are ready to give our geometric interpretation in the complex analytic setting.

\begin{prop}  \label{anal branched cover} Let $V \subset \IC^m$ be  an irreducible                                               
complex analytic space, let $A$ be  the local ring of germs of holomorphic functions at a point $x \in V$, and let $h$ be an element of the normalization of $A$. Then, $h$ is in the weak normalization of $A$ if and only if there exists an open neighborhood $U$ of $x$ in $ \IC^m$, and a  complex analytic space  $Y  \subset \IC^{m+1}$   such that the projection onto the first $m$ factors $p: Y \rightarrow \IC^m$ is a surjective finite morphism and such that
$$ \Gamma_h \subset p^{-1}(V) \cap Y_0.$$
\end{prop}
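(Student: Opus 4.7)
The plan is to mimic the proof of Proposition \ref{alg branched cover}, making the standard adjustments needed for the analytic setting. The forward direction is a direct geometric translation of the Reid-Roberts-Singh equations, while the reverse direction requires first producing a monic polynomial that defines the branched cover $Y$ and then running the algebraic argument.

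For the forward implication, suppose $h \in {}^*A$. By \cite[Theorem 6.11]{RRS} (which applies equally in the analytic setting), there exist $q \ge 0$ and germs $a_1, \ldots, a_{2q+1} \in A$ such that $h$ satisfies the system (\ref{RRSeqns}). Choose a polydisc neighborhood $U$ of $x$ on which each $a_i$ is represented by a holomorphic function $f_i$, set
\[
F(X,T) = T^{2q+1} + \sum_{i=1}^{2q+1} \binom{2q+1}{i} f_i(X)\, T^{2q+1-i},
\]
and let $Y \subset U \times \IC$ be the zero set of $F$. Since $F$ is monic in $T$, the projection $p \colon Y \to U$ is a surjective finite morphism of degree $N = 2q+1$, and the pointwise argument of Lemma \ref{lem zeros} identifies $Y_0$ with $ZZ(F)$ for $\ell = q$. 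The RRS equations then say precisely that $F$ and its first $q$ $T$-derivatives vanish at $(y, h(y))$ for each $y \in V \cap \reg(h)$, yielding $\Gamma_h \subset p^{-1}(V) \cap Y_0$.

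For the converse, given $Y$ and $U$ as in the statement, shrink $U$ if necessary so that $T|_Y$ is integral over $\cO(U)$, and choose a monic polynomial $F(X,T) \in \cO(U)[T]$, of degree equal to the generic sheet count $N$ of $p$, that vanishes on $Y$. Such an $F$ comes from the Weierstrass-type structure of the branched cover, for instance as the product of linear factors $T - \tau_j(X)$ over the branches $\tau_j$ taken with their multiplicities; the coefficients are elementary symmetric functions of the branches and extend holomorphically across the branch locus. With this choice, the condition that $\ell = \lfloor N/2 \rfloor$ sheets of $p$ meet at $(x, t)$ is exactly the condition that $t$ is a root of $F(x, T)$ of multiplicity at least $\ell + 1$, so $Y_0 \subset ZZ(F)$ and hence $\Gamma_h \subset p^{-1}(V) \cap ZZ(F)$. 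The algebraic argument now applies essentially verbatim: the identities $\partial^j F / \partial T^j (y, h(y)) = 0$ for $0 \le j \le \ell$ hold on $V \cap \reg(h)$ and so lift to the normalization $\pi \colon \widetilde V \to V$; if $h(y_1) \ne h(y_2)$ at two points $y_1, y_2$ in a single fiber of $\pi$, then $(T - h(y_1))^{\ell+1}(T - h(y_2))^{\ell+1}$ would divide $F(x, T)$, contradicting $2(\ell+1) > N$. Hence $h$ is constant on the fibers of $\pi$, which in the analytic characteristic-zero setting is equivalent to $h \in {}^*A$.

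The main obstacle, relative to the algebraic case, lies in constructing the polynomial $F$ in the reverse direction. In the algebraic setting such a polynomial is given as data; here it must be extracted from the branched-cover structure of $p$, and one must verify that the sheet-coalescence definition of $Y_0$ genuinely matches $ZZ(F)$ for this choice. Once that translation is in hand, the rest of the argument is a straightforward transcription of the proof of Proposition \ref{alg branched cover}.
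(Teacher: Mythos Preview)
Your forward direction is essentially identical to the paper's. The reverse direction is correct but proceeds differently. After extracting the monic polynomial $F$ (which the paper also does, via the Weierstrass preparation theorem), you transplant the algebraic argument of Proposition~\ref{alg branched cover}: lift the vanishing of $F$ and its first $\ell$ $T$-derivatives to the normalization $\widetilde V$, and use the divisibility contradiction to force $h$ to be constant on the fibers of $\pi$, hence weakly subintegral. The paper instead argues geometrically: since $Y_0$ is closed and the restriction of $p$ to $Y_0$ is injective, the closure of $\Gamma_h$ in $p^{-1}(V)$ is a full component of $p^{-1}(V)$ on which $T$ is holomorphic and projects homeomorphically to $V$; this directly exhibits $h$ as a continuous function on $V$, holomorphic off the singular locus. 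Your route has the advantage of reusing the algebraic proof verbatim and making the role of the RRS-type equations transparent; the paper's route avoids passing through the normalization and instead reads off weak subintegrality from the homeomorphism of the graph onto $V$, which is closer to the intrinsic analytic definition. Both are short and valid.
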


\begin{proof}
Assume that  $Y  \subset \IC^{m+1}$ is an analytic space  over a neighborhood $U$ of $x$ such that the projection onto the first $m$ factors $p: Y \rightarrow \IC^m$ is a surjective finite morphism satisfying  $ \Gamma_h \subset p^{-1}(V) \cap Y_0$. (This implies that $Y$ is a hypersurface, hence given by 1 equation, which by the preparation theorem we can take to be monic in $T$.)
Consider the graph of $h$
in the product $V \times \IC\subset \IC^{m+1}$.  Now every component of the  restriction of our branched cover to $V$ must have dimension the same as
that of $V$. Since the locus of points where $\ell = \lfloor \frac{N}{2} \rfloor$  sheets of the cover come together is closed and contains the graph of  $h$, the graph of $h$ must be a component of the restriction of the branched cover to $V$, and as  $h=T$    on this component, $h$ is analytic, and the map from the graph to $V$ is a homeomorphism. Hence, we must have $h$ is
weakly subintegral over $V$.

Now assume that $h$ is in the weak normalization of $A$ so satisfies a sequence of equations as in  (\ref{RRSeqns}). Then the zero locus $Y$ of the monic equation of highest degree $2q+1$ in $T$ in condition (\ref{RRSeqns}) defines a  branched cover on some neighborhood $U$of $x$ in $\IC ^m$
of degree $2q+1$. The vanishing of these equations in $U \subset \IC^{m+1}$ is exactly the locus
of points where $q+1$ sheets of the cover come together,  hence $\Gamma_h \subset p^{-1}(V) \cap Y_0$.
\end{proof}

Note that by the discussion before Proposition \ref{alg branched cover}  the condition that  that the graph $ \Gamma_h $ is a subset of $Y_0$ implies that $h$ satisfies  a sequence of equations as in  (\ref{RRSeqns}).

Now we consider the weak subintegral closure of an ideal. We use the definition proposed by Vitulli and Leahy \cite{VL}, which in turn is based on the 
criterion of Reid, Roberts and Singh \cite{RRS}.  Their definition stands in the same relation to the definition of Reid, Roberts and Singh, as
the definition of the integral closure of an ideal does to the normalization of a ring.

Consider $I \subset A\subset B$. We say $b \in B$ is
{\it weakly subintegral over
I} provided that there exist
 $q \in \N$ and  $a_i\in{I}^i$, for  $(1\le{i}\le 2q+1$),
such that
\begin{equation}\label{LVeqns}
b^n+\sum_{i=1}^{n} {n \choose i} a_ib^{n-i}=0\quad (q+1\le n \le 2q+1).
\end{equation}

\noindent We let
$${}^*_BI = \{ b \in B \; | \; b \hskip 4pt{\rm is\hskip 4pt weakly\hskip 4pt
subintegral\hskip 4pt over\hskip 4pt }I\}.$$
We call ${}^*_BI  $ the {\it weak subintegral closure of
$I$ in $B$}.  We write ${}^*I$ instead of ${}^*_AI  $ and refer to ${}^*I$ as the {\it weak subintegral closure of $I$}. 

\bigskip

 The paper \cite{VL} contains an important link between weak normalization of a graded ring and weak subintegral closure of an ideal, which we recall. Suppose that $A \subseteq B$ are rings, $I$ is an ideal in $A$, and $b \in B$. Then, $b$ is weakly subintegral over $I^m$ if and only if the element $bt^m \in B[t]$ is weakly subintegral over the Rees ring $A[It]$ ( \cite[Lemma 3.2]{VL}).  Thus ${}^*_BI  $ is an ideal of ${}^*_BA$ (c.f.  \cite[Prop. 2.11]{VL}).  In particular, ${}^*I$ is an ideal of $A$.  Vitulli and Leahy also  show that for an ideal $I$ in a reduced ring $A$ with finitely many minimal primes and total quotient ring $Q$, we have ${}^*(A[It]) = \oplus_{n \ge 0} \, {}^*_Q (I^n)t^n$ ( \cite[Corollary 3.5]{VL}).

\section{Local Analytic and Algebraic Characterizations for Ideals}

Throughout this section for an ideal $I$ of a Noetherian ring $A$ we let
$\overline{I}$ denote the integral closure of $I$ and  
${}^*I$ denote the weak subintegral  closure of $I$ as defined in
\cite{VL}.  Similarly, we let $\overline{A}$ and ${}^*A$ denote
the normalization and weak normalization of $A$, respectively.  For a graded ring $R$, a homogeneous element $f \in R$, and a homogeneous prime ideal $\mathfrak{q} \subset R$, we let $R_{(f)}$ and $R_{(\mathfrak{q})}$ denote 
 the degree 0 parts of the graded rings obtained by localizing with respect to the homogenous multiplicative subsets $\{1,f, f^2, \ldots \}$ and $(R \setminus \mathfrak{q}) \cap R^h$, respectively, where $R^h$ denotes the set of homogeneous elements of $R$.  To avoid confusion, if $(f)$ is a homogeneous prime, we will let $\q=  (f)$ and write $R_{(\q)}$ for the degree 0 part of $R$ localized with respect to $(R \setminus \mathfrak{q}) \cap R^h$.
 
\begin{notn}  For an ideal $I$ of a Noetherian ring $A$ and an
element $a \in A$ we write
$\ord_I(a) = n$ if $a \in I^n \setminus I^{n+1}$ and $\ord_I(a)
= \infty$ if $a \in \bigcap_{n \ge 1} I^n$.  Next we define 
$$\overline{v}_I(a) = \lim_{n \to \infty} \frac{\ord_I(a^n)}{n}.$$  
The indicated limit always exists (possibly
being $\infty$; \cite[Prop. 11.1]{Mc}) and 
$\overline{v}_I$ is called the asymptotic Samuel function of
$I$.  
For a non-nilpotent ideal
$I$ we let
$$\vv(I) = \{(V_1, \m _1), \ldots , (V_r,\m_r) \}$$ 
denote the set of Rees valuation rings of
$I$ and let $\{ v_1,\ldots, v_r \}$ denote the corresponding Rees 
valuations.  Let
$\mathcal{MR}(I)$ denote the set of minimal reductions of $I$. 
For an $\N$-graded ring $R$ we let $R_+ = \oplus_{n>0} \, R_n$. 
\end{notn}
%:  lem:S+
\begin{lemma} \label{lem: S+} Let $I$ be a nonzero proper ideal in a
reduced Noetherian local ring $(A, \m, k), \\ R=A[It], T =
\overline{R}$, and  $S = {}^* R$.  
\begin{enumerate}
\item Suppose that $\q \in
\spec(T)$ contains $It$.  Then, $T_+ \subseteq \q$.
\item Suppose that $\q \in \spec(S)$ contains $It$.  Then, $S_+
\subseteq \q$.
\end{enumerate} 
\end{lemma}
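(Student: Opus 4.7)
The plan is to exploit two structural facts. First, $R = A[It]$ is generated in degree one over $R_0 = A$, so the irrelevant ideal $R_+$ is generated as an $R$-ideal by $R_1 = It$; consequently the hypothesis $It \subseteq \q$ immediately gives $R_+ \subseteq \q \cap R$. Second, both $T = \overline{R}$ and $S = {}^*R$ inherit an $\N$-grading from $R$, with $T_n = \overline{I^n}\,t^n$ (integral closure of an $\N$-graded ring in a graded overring is graded) and $S_n = {}^*(I^n)\,t^n$ (this is \cite[Corollary 3.5]{VL}). With these in hand, the statement $T_+ \subseteq \q$ (resp.\ $S_+ \subseteq \q$) reduces to showing $x \in \q$ for every homogeneous element $x$ of positive degree.

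For part (1), I would pick a homogeneous $x \in T_n$ with $n \geq 1$ and write an integral equation $x^k + r_1 x^{k-1} + \cdots + r_k = 0$ with $r_i \in R$. Extracting the homogeneous component of total degree $nk$, I may replace each $r_i$ by its degree $ni$ part, so $r_i \in R_{ni}$. Since $n \geq 1$ and $i \geq 1$, each $r_i$ lies in $R_+ \subseteq \q \cap R \subseteq \q$. Rearranging the equation gives $x^k = -\sum_{i=1}^{k} r_i x^{k-i} \in \q$, and primality of $\q$ forces $x \in \q$.

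For part (2), the argument is the same: since every weakly subintegral element is integral, each homogeneous $x \in S_n$ with $n \geq 1$ satisfies an integral equation over $R$, and the same extraction of homogeneous parts puts $x$ in $\q$. One could alternatively invoke the Reid--Roberts--Singh equations (\ref{LVeqns}) directly---the coefficients $a_i$ automatically lie in $R_+$ by the same degree count---but this is unnecessary.

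The only subtle point is the inherited grading on $T$ and $S$; for $T$ this is classical, and for $S$ it is exactly \cite[Corollary 3.5]{VL}. Beyond citing those facts I foresee no obstacle, as the whole argument reduces to one line of degree bookkeeping combined with the primality of $\q$.
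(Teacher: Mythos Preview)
Your proof is correct and follows essentially the same approach as the paper: take a homogeneous element of positive degree, use its integral equation, observe that the (homogeneous) coefficients lie in $R_+ \subseteq \q$, and conclude by primality. The paper phrases the integral equation at the level of the ideal (writing $c_i \in K^{di}$ with $K = I\overline{A}$ and then multiplying through by $t^{dn}$) rather than extracting homogeneous components directly in $T$, but this is only a cosmetic difference; your version is, if anything, slightly more streamlined since it avoids introducing the auxiliary rings $B, C$ and ideals $J, K$.
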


\begin{proof} Let $I = (a_1, \ldots, a_{\ell})$, $B = {}^*A$, 
$C = \overline{A}$, $J = IB$, and $K=IC$.  Recall that $S =
\oplus_{n \ge 0} {}^*(J^n)t^n$ and $T =
\oplus_{n\ge 0} \overline{K^n}t^n$.  Suppose that
$bt^d \in T_d, \, d \ge 1$.  Then $b
\in \overline{K^d}.$  Thus $bt^d$ satisfies
an equation of integral dependence
$$(bt^d)^n + c_1t^d(bt^d)^{n-1} + \cdots + c_nt^{dn}=0,$$
where $c_i \in K^{di}$.  Thus $c_it^{di}$ is a $C$-linear
combination of monomials of degree $di$ in $a_1t, \dots,
a_{\ell}t$.  By assumption, each such monomial is in $\q$ and
hence so is each $c_it^{di}$. Hence each $c_it^{di} \in \q$ and 
$(bt^d)^n \in \q$ and therefore $bt^{d} \in \q$. For the second
assertion, if we start with an element $bt^d \in S_d$ then $b \in
{}^*(J^d)$ and we get a similar equation of linear dependence where each
$c_it^{di}$  is a $B$-linear combination of monomials of degree $di$ in
$a_1t, \dots, a_{\ell}t$.  Hence $bt^{d} \in \q$ as above.
\end{proof}

Next we develop some of the properties of weak subintegral closure of ideals and some analogous results in the theory of integral closure from the joint perspectives of complex analytic geometry and commutative algebra.  We  begin with a criterion for an element to be in the weak subintegral closure of an ideal that uses blow-ups. This is analogous to the condition given by Teissier and Lejeune-Jalabert \cite{LT} for an element to be in the integral closure of an ideal. We present separate results for the complex analytic and purely algebraic settings in hope of reaching a wider audience.  

Our first pair of results is modeled  on a result proved by Teissier and  LeJeune-Jalabert which links the integral closure of an ideal with the pullback of the ideal to the normalized blow-up by the ideal.

%: theorem L-J analytic
\begin{theorem} \label{L-J analytic}  Let 
 $I$ be an ideal in a local ring $\cO_{X,x}$ of an analytic space $X$. Denote the normalization of the 
blow-up of $X$ by $I$ by $NB_I(X)$ with projection map $\pi$.  Then
given $h \in \cO_{X,x}, h\in \bar I$ if and only if   $h\circ\pi\in \pi^*(I)\cO_{NB_I(X),y} \;$ for all $y\in\pi^{-1}(x)$.
\end{theorem}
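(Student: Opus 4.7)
The plan is to prove both implications via the standard fact that the pullback ideal $\pi^{*}(I)\cO_{NB_I(X)}$ is locally principal (invertible) on the normalized blow-up, combined with the valuative characterization of integral closure in terms of Rees valuations.

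For the forward direction, I would assume $h\in\bar I$ and take an integral equation $h^{n}+a_{1}h^{n-1}+\cdots+a_{n}=0$ with $a_{i}\in I^{i}$. Pulling it back to $\cO_{NB_I(X),y}$ for an arbitrary $y\in\pi^{-1}(x)$ yields the same relation, with $a_{i}\circ\pi$ in place of $a_{i}$. Because $B_{I}(X)$ makes $I$ invertible and normalization is finite, the stalk of $\pi^{*}(I)\cO_{NB_I(X)}$ at $y$ is principal, say generated by $g$. Writing $a_{i}\circ\pi=g^{i}c_{i}$ with $c_{i}\in\cO_{NB_I(X),y}$ and dividing the pulled-back integral equation by $g^{n}$, one obtains an integral equation for $h\circ\pi/g$ over $\cO_{NB_I(X),y}$. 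Since that local ring is normal (we normalized!), $h\circ\pi/g\in\cO_{NB_I(X),y}$, whence $h\circ\pi\in(g)=\pi^{*}(I)\cO_{NB_I(X),y}$.

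For the converse, the plan is to use the description of Rees valuations of $I$ in terms of the exceptional fiber on $NB_I(X)$: each Rees valuation $v_{i}$ of $I$ is the normalized order of vanishing along an irreducible component $E_{i}$ of $\pi^{-1}(V(I))$, and every such $v_{i}$ is centered on $\m_{x}$ so the corresponding $E_{i}$ meets $\pi^{-1}(x)$. The hypothesis says that the pulled-back section $h\circ\pi$ lies in the locally principal sheaf $\pi^{*}(I)\cO_{NB_I(X)}$ at every $y\in\pi^{-1}(x)$; picking $y\in E_{i}\cap\pi^{-1}(x)$ and further localizing at the generic point of $E_{i}$ (a DVR with valuation $v_{i}$) gives $v_{i}(h\circ\pi)\ge v_{i}(\pi^{*}(I))$, i.e. $v_{i}(h)\ge v_{i}(I)$. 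Applying this to every Rees valuation and invoking the valuative criterion for integral closure produces $h\in\bar I$.

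The main obstacle will be the converse, and specifically justifying two standard but nontrivial ingredients in the complex-analytic local setting: first, that the Rees valuations of an ideal $I\subset\cO_{X,x}$ are exhausted by the DVRs at generic points of the irreducible components of the exceptional fiber of $NB_I(X)\to X$ meeting $\pi^{-1}(x)$; and second, that the Rees valuative criterion $h\in\bar I\iff v_{i}(h)\ge v_{i}(I)$ for all $i$ holds for analytic local rings. Both are classical in the algebraic case and admit the usual analytic transcriptions via faithful flatness of the henselization/completion, but they are where the technical work sits; the rest of the argument is essentially bookkeeping around invertibility of $\pi^{*}(I)$ and normality of the stalks of $NB_I(X)$.
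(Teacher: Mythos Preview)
Your argument is correct and is essentially the standard proof of this classical result. The forward direction via pulling back an integral equation and using normality of the stalks of $NB_I(X)$ is exactly right, and your converse via the Rees valuations attached to the components of the exceptional locus is the expected route; you correctly flag that the two ingredients needing care in the analytic category are the identification of the Rees valuations with the divisorial valuations on $NB_I(X)$ and the validity of the valuative criterion for $\overline{I}$ in analytic local rings.

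The paper, however, does not prove this statement at all: its entire proof is a one-line citation to \cite[Theorem 2.1]{LT} and \cite[Proposition 1 of Section 1.3.1]{T}. So there is no ``approach'' to compare against yours; what you have written is, in effect, a sketch of the content of those references. If you want your write-up to match the paper's level of detail, you could simply cite Lejeune-Jalabert--Teissier and Teissier and move on. If instead you want a self-contained proof, your outline is sound, and the only places that require genuine work are precisely the two analytic facts you already identified as the main obstacles.
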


\begin{proof} This immediately follows from  \cite[Theorem 2.1]{LT}  and  \cite[Proposition 1 of Section 1.3.1]{T}.
\end{proof}

We present the algebraic version of this result after first proving a lemma.  

%: lem: local char of int cl
\begin{lemma} \label{lem: local char of int cl} Let $(A, \m )$ be a Noetherian local ring, $I = (a_1, \ldots, a_{\ell})$ an ideal of $A$, $a$ an element of  
$A, R=A[It]$, and suppose $R \subseteq S$ is an integral extension of $\N $-graded rings.  Then, 
$$a \in IS_{(\q ) } \; \forall \q \in \proj(S) \mbox{ lying over } \m \Leftrightarrow a \in IS_{(a_it)} \; (i=1, \ldots , \ell).$$

\end{lemma}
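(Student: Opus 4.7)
My plan is to reformulate both conditions inside the graded quotient $\bar S := S/IS$ and reduce the non-trivial direction to a properness statement on $\proj(\bar S) \to \spec(A/I)$.

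For the direction $(\Leftarrow)$: The key observation is that for any $\q \in \proj(S)$, at least one of the $a_it$ must lie outside $\q$. Indeed, if all $a_it$ were in $\q$ then $R_+ \subseteq \q$, and writing a homogeneous integral relation for any $s \in S_n$, $n \geq 1$, over $R$ forces the coefficients into $R_+ \subseteq \q$, hence $s^k \in \q$; so $S_+ \subseteq \q$, contradicting $\q \in \proj(S)$. For such an index $i$ the natural localization homomorphism $S_{(a_it)} \to S_{(\q)}$ carries $IS_{(a_it)}$ into $IS_{(\q)}$, so $a \in IS_{(a_it)}$ gives $a \in IS_{(\q)}$; note this argument works for every $\q \in \proj(S)$, not just those over $\m$.

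For the direction $(\Rightarrow)$: Let $\bar a \in \bar S_0$ denote the image of $a$ and set $J := \mathrm{ann}_{\bar S}(\bar a)$, a homogeneous ideal. Unpacking the definitions, $a \in IS_{(a_it)}$ is equivalent to $(a_it)^N \in J$ for some $N$, while $a \in IS_{(\q)}$ is equivalent to $J \not\subseteq \bar \q$. Since $\bar S$ is integral over $\bar R := \gr_I(A)$ and $\bar R_+$ is generated in degree one by the images of $a_1 t, \ldots, a_\ell t$, an integrality argument shows $\sqrt{(a_1t, \ldots, a_\ell t)\bar S} = \sqrt{\bar S_+}$. Therefore the right-hand side of the desired equivalence translates into $V(J) = \emptyset$ in $\proj(\bar S)$, while the hypothesis translates into $V(J) \cap \pi^{-1}(\m) = \emptyset$, where $\pi \colon \proj(\bar S) \to \spec(A/I)$ is the structure map.

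The forward direction now reduces to a properness argument. The map $\proj(\bar R) \to \spec(A/I)$ is projective because $\bar R$ is generated over $A/I$ in degree one; assuming $S$ is module-finite over $R$ (so $\bar S$ is module-finite over $\bar R$), the induced map $\proj(\bar S) \to \proj(\bar R)$ is finite, hence $\pi$ is proper. If $V(J)$ were non-empty, $\pi(V(J))$ would be a non-empty closed subset of $\spec(A/I)$ and so would contain the unique closed point $\m/I$, forcing $V(J) \cap \pi^{-1}(\m) \neq \emptyset$ and contradicting the hypothesis. The main obstacle is justifying this properness/closedness input: the statement seems to require a finiteness hypothesis on the integral extension $R \subseteq S$, which holds in the applications of interest (where $S$ is the normalization or weak normalization of $R$ in a nice enough setting).
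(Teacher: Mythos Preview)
Your $(\Leftarrow)$ is correct and coincides with the paper's argument: both use that for $\q\in\proj(S)$ one has $a_it\notin\q$ for some $i$ (equivalently $S_+\subseteq\sqrt{(a_1t,\ldots,a_\ell t)S}$, which is the content of Lemma~\ref{lem: S+}), and then pass from $S_{(a_it)}$ to $S_{(\q)}$.

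For $(\Rightarrow)$ the paper takes a different, more hands-on route. It assumes $a\notin IS_{(a_it)}$ for some $i$, observes that then the colon ideal $(IS:_S a)_{a_it}$ is proper, picks a homogeneous prime $\q$ of $S$ containing $IS:a$ with $a_it\notin\q$, and then \emph{enlarges} $\q$ by pushing its degree-zero part $\q\cap S_0$ up to a maximal ideal $\n$ of $S_0$; since $S_0$ is integral over $A$ and $A$ is local, $\n\cap A=\m$, and the resulting prime furnishes a point of $\proj(S)$ over $\m$ at which $a\notin IS_{(\q)}$. Your approach, by contrast, passes to $\bar S=S/IS$, encodes the obstruction as the closed set $V(J)$ with $J=\mathrm{ann}_{\bar S}(\bar a)$, and invokes properness of $\pi\colon\proj(\bar S)\to\spec(A/I)$ to conclude that if $V(J)$ misses the unique closed fibre it is empty.

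Your scheme-theoretic argument is conceptually cleaner and makes the global picture transparent; the paper's argument stays at the level of prime ideals and avoids any appeal to properness of $\proj$. You are right that your properness step requires $\bar S$ to be a finitely generated $A/I$-algebra, hence essentially that $S$ be module-finite over $R$. The paper does not isolate such a hypothesis; its ``enlarge $\q$'' step is quite terse, and checking carefully that the enlarged prime remains in $\proj(S)$ (i.e.\ still avoids $S_+$) in the stated generality is not entirely obvious without some finiteness. In the paper's applications $S$ is $\overline R$ or ${}^*R$, where the needed finiteness is available, so either approach is adequate there; yours has the virtue of making the hypothesis explicit.
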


\begin{proof} Suppose that $a \in IS_{(a_it)} \; (i=1, \ldots, \ell)$.  Let $\q \in \proj(S)$ be a prime ideal  lying over $\m$.  Since $S_+ \nsubseteq \q$ we must have $a_it \notin \q$ 
for some $i$ by Lemma \ref{lem: S+}.  Hence $a \in IS_{(\q )}$.  

Now assume that $a \in IS_{(\q )} \; \forall  \q \in \proj(S)$ lying over $\m$. 
 Just suppose that $a \notin IS_{(a_it)}$ for some $i$.  Then, $a \notin IS_{a_it}$ for some $i$, since $a \in R_0 \subseteq S_0$.  Thus $IS_{a_it}:_{S_{a_it}}a = (IS:a)_{a_it}$ is a proper homogeneous ideal of $S_{a_it}$.  Hence $IS:a$ is contained in some homogeneous prime $\q$ of $S$ that doesn't contain $a_it$.  Hence $S_+ \nsubseteq \q$.
Now $\q = \q \cap S_0 + \q \cap S_+$.  We may enlarge 
 $\q$ if necessary  (by replacing $\q \cap S_0$ by $\mathfrak{n}$ where $\mathfrak{n}$ is some maximal ideal of $S_0$) and assume that $\q \cap A = \m$ and $IS : a \subseteq \q$, contradicting our assumption.  Thus, $a \in IS_{(a_it)}$ for all $i$. 
\end{proof}

We are ready to present the algebraic version of Theorem \ref{L-J analytic}.

%: prop: local I
\begin{theorem} \label{prop: local I} Let $I$ be an ideal in
a reduced Noetherian local ring $(A, \m, k),  \\R=A[It], S =
\overline{R}$, and $a \in A$.  Then,
$$ a \in \overline{I} \Leftrightarrow  a \in IS_{(\q)} \; \forall \q  
\in \proj(S) \makebox{ such that } \q \cap A = \m.$$
\end{theorem}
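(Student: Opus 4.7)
The plan is to combine Lemma \ref{lem: local char of int cl} with the classical description of $\overline{I}$ via the affine charts of the normalized blow-up of $\spec(A)$ along $I$.

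First I would apply Lemma \ref{lem: local char of int cl} to the integral $\N$-graded extension $R \subseteq S = \overline{R}$. This converts the right-hand side of the theorem into the condition that $a \in IS_{(a_it)}$ for each $i = 1, \ldots, \ell$. Since $a_j = a_i \cdot (a_jt/a_it)$ inside $S_{(a_it)}$, the ideal $IS_{(a_it)}$ coincides with the principal ideal $a_i S_{(a_it)}$, and the condition further reduces to $a/a_i \in S_{(a_it)}$ for each $i$.

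Next I would establish the equivalence between $a \in \overline{I}$ and the latter condition. For the forward direction, I would use the standard fact that $a \in \overline{I}$ if and only if $at$ is integral over $R$, equivalently $at \in S_1$; then $a/a_i = at/(a_it) \in S_{(a_it)}$ for each $i$. For the converse, suppose $a/a_i \in S_{(a_it)}$ for each $i$. Because $S_{(a_it)}$ is integral over the degree-$0$ subring $R_{(a_it)} = A[a_1/a_i, \ldots, a_\ell/a_i]$, the element $a/a_i$ satisfies a monic equation of degree $n$ with coefficients in $A[I/a_i]$. Expressing each coefficient with common denominator a suitable power of $a_i$ and multiplying the whole equation through by an appropriate power of $a_i$ produces a monic equation satisfied by $a$ in which the coefficient of $a^{n-k}$ lies in $I^k$, exhibiting $a \in \overline{I}$.

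The main obstacle is the clearing-of-denominators step in the converse: the exponent of $a_i$ must be chosen so that each rescaled coefficient $c_k a_i^k$ actually lies in $I^k \subseteq A$ rather than merely in the total quotient ring, and the reduced (possibly non-domain) hypothesis requires some care when interpreting these identities inside $Q(A)$. This is, however, a standard manipulation in the theory of integral closure of ideals via affine charts of the blow-up, so the real content of the theorem is packaged into Lemma \ref{lem: local char of int cl}, which lets one pass from the global statement over all $\q \in \proj(S)$ with $\q \cap A = \m$ to the chart-by-chart statement.
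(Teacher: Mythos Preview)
Your forward implication and the reduction via Lemma \ref{lem: local char of int cl} are correct and match the paper. The gap is in the converse: the ``clearing of denominators'' does not produce an equation of integral dependence for $a$ over $I$. If the monic relation for $a/a_i$ over $A[I/a_i]$ has $k$-th coefficient $b_k=c_k/a_i^{d_k}$ with $c_k\in I^{d_k}$, then after multiplying by $a_i^{\,n}$ the coefficient of $a^{n-k}$ is $c_k a_i^{\,k-d_k}$, which lies in $I^k$ only when $d_k\le k$; nothing forces this. In fact no argument using a single chart can work. Take $A=k[x,y]_{(x,y)}$ and $I=(x^2,xy,y^3)$. Then $A[I/x^2]=A[y/x]$ is already normal, so $S_{(x^2t)}=A[y/x]$ and $y^2/x^2=(y/x)^2\in S_{(x^2t)}$, i.e.\ $y^2\in IS_{(x^2t)}$; yet $y^2\notin\overline I$, since the Rees valuation $v(x^ay^b)=2a+b$ gives $v(y^2)=2<3=v(I)$. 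Your procedure applied to this chart returns only the tautology $y^2-y^2=0$, whose ``coefficient'' $-y^2$ is not in $I$.

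The paper's converse combines the charts through the Rees valuations: from $a/a_i\in S_{(a_it)}$ one extracts $a\,a_i^{\,d-1}\in\overline{I^d}$ for some $d$, and for each Rees valuation $v$ one chooses the index $i$ with $v(a_i)=v(I)$ to deduce $v(a)\ge v(I)$; hence $\overline v_I(a)\ge 1$ and $a\in\overline I$. This valuative step, not a denominator-chasing identity, is the actual content of the implication you labelled a ``standard manipulation''. The paper then handles the reduced non-domain case by passing to $A/P$ for each minimal prime $P$ and using $\overline R=\prod_i\overline{R/Q_i}$, a reduction your proposal flags but does not carry out.
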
  

\begin{proof} The assertions clearly hold if either $I$ is either the zero ideal or all of $A$.  So assume   $I = (a_1, \ldots, a_{\ell})$ is a proper nonzero ideal of $A$. Consider an element
$a \in \II$.   

We first consider the case where
$A$ is an integral domain.   Let $\mathfrak{q} \in
\proj(S)$ be such that $\mathfrak{q} \cap A = \m$.  Since $\q \in
\proj(S)$ we must have $a_it \notin \q$ for some $i$ by the preceding
lemma.  Now
$IS_{a_it} = a_iS_{a_it}$ is a principal ideal in the Krull domain
$S_{a_it}$ and hence is a normal ideal. Since taking integral closure of
ideals commutes with localization, $a
\in IS_{a_it}$ and hence $a \in IS_{\q}$.  Since $a \in R$ is
homogeneous of degree 0 this means $a \in IS_{(\q)}$.

Now assume that $a \in IS_{(\q)} \; \forall \q
\in \proj(S) \makebox{ such that } \q \cap A = \m$.  By the preceding lemma,
this implies $a \in \bigcap_{i = 1}^{\ell}
IS_{(a_it)}$.  
Thus $a/a_i \in S_{(a_it)}$ for $i = 1, \ldots, \ell$.  We claim
this implies that  $\overline{v}_I(a) \ge 1$.  
 Let $v$ be any Rees
valuation of $I$ and let $v(I) = v(a_i)$.  Since $a/a_i =
bt^d/(a_it)^d$ for some $b \in S_d = \overline{I^d}t^d$ we have
$aa_i^d = a_ib$.   Hence $v(a) + dv(a_i) = v(a_i) + v(b) \ge
v(a_i) + dv(a_i) \Rightarrow v(a) \ge v(a_i) = v(I)$ and hence
$v(a)/v(I) \ge 1$.  Since this holds for every Rees valuation and
$\overline{v}_I(a) = \min\{v_j(a)/v_j(I) \}$ we must have
$\overline{v}_I(a) \ge 1$.  Hence $a \in \II$ as desired.
  
Now let $A$ be a Noetherian reduced ring with minimal primes
Min$(A) = \{ P_1, \ldots, P_u \}$.  Recall that $a \in \II
\Leftrightarrow a + P_i \in \overline{I+P_i/P_i}$ for all $i$. 
Let $Q_i = P_iA[t] \cap A[It] \; (i=1, \ldots, u)$.  Then, $S =
\overline{R} = \overline{R/Q_1} \times \cdots \times
\overline{R/Q_u} = S_1 \times \cdots \times S_u$.  Since $a \in
IS_{(\q)} \; \forall \q \in \proj(S)$  such that $ \q \cap A = \m$ if and only if 
$a + P_i \in I(S_i)_{(q)} \; \forall \q
\in \proj(S_i)$ such that $\q \cap A = \m
\; (i = 1, \ldots, u)$, the result for reduced rings follows from
the integral domain case.
\end{proof}

We point out that in the case of an Noetherian local domain, the middle portion
 of the proof of the above theorem follows from a result that
appeared in \cite[Lemma 3.4]{Hu} and is due to J. Lipman. We recall
that result now.

\begin{lemma} Let $(R, \m)$ be a (Noetherian) local domain, $I$ an ideal
of
$R$.  Then, $\overline{I} = (\cap IV ) \cap R$, where the
intersection is taken over all discrete valuation rings in the quotient
field of $R$ which contain $R$ and have center $\m$.

\end{lemma}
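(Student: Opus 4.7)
The plan is to prove both inclusions separately. For the forward inclusion, given $a \in \II$ with an integral equation $a^n + \sum_{i=1}^n c_i a^{n-i} = 0$ where $c_i \in I^i$, and any DVR $V$ with $R \subseteq V \subseteq K$, I set $v(I) := \min_{b \in I} v(b)$ (well-defined since $I$ is finitely generated). A direct valuation comparison in the equation forces $v(a) \geq v(I)$: otherwise every term $c_i a^{n-i}$ would have valuation strictly greater than $v(a^n)$, which is impossible. Thus $\II \subseteq IV$ for every DVR $V$ containing $R$, and a fortiori for those centered at $\m$; intersecting with $R$ yields $\II \subseteq (\cap IV) \cap R$.

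For the reverse inclusion, I would argue by contradiction: suppose $a \in R$ lies in $IV$ for every DVR $V \subseteq K$ containing $R$ and centered at $\m$, but $a \notin \II$. Form the Noetherian subdomain $B = R[I/a] \subseteq K$; every element of $B$ is of the form $\sum_{k=0}^N z_k/a^k$ with $z_k \in I^k$. My central claim is that the ideal $J := (I/a)B + \m B$ of $B$ is proper. Indeed, if $1 \in J$, writing $1 = \sum z_k/a^k$ with $z_0 \in \m$ and $z_k \in I^k$ for $k \geq 1$ and clearing denominators gives $(1 - z_0)a^N = \sum_{k \geq 1} z_k a^{N-k}$; since $1 - z_0$ is a unit in $R$, dividing through produces a monic integral equation witnessing $a \in \II$, a contradiction.

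Next I would choose a maximal ideal $\n$ of $B$ containing $J$. Then $\n \cap R = \m$ because $\m B \subseteq \n$ and $\m$ is maximal in $R$. Invoking the standard fact that every Noetherian local domain is dominated by some DVR of its quotient field, I obtain $V$ with $B_\n \subseteq V \subseteq K$ whose maximal ideal contracts to $\n B_\n$. Then $V \supseteq R$ has center $\m$ on $R$, so by hypothesis $a \in IV$, i.e., $v(a) \geq v(I)$ in the valuation $v$ of $V$. But each $b/a$ with $b \in I$ lies in $(I/a)B \subseteq \n$, hence in the maximal ideal of $V$, so $v(b) > v(a)$ for every $b \in I$, forcing $v(I) > v(a)$ and producing the desired contradiction.

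The hard part will be verifying properness of $J = (I/a)B + \m B$; this hinges on simultaneously exploiting the local nature of $R$ (so $1 - z_0$ is a unit) and the representation of elements of $B$ as finite sums $\sum z_k/a^k$ with $z_k \in I^k$, in order to convert the relation $1 \in J$ into an honest monic integral equation for $a$ over $I$. The appeal to the existence of a dominating DVR is classical, available via Krull--Akizuki together with a normalization step, or via iterated quadratic transforms in the sense of Abhyankar.
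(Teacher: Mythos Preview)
The paper does not supply its own proof of this lemma: it is stated as a result quoted from \cite[Lemma 3.4]{Hu}, attributed to Lipman, and no argument is given. So there is no paper proof to compare against; your proposal stands on its own.

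Your argument is correct. The forward inclusion is the standard valuation estimate. For the reverse inclusion, your key step---showing that $J=(I/a)B+\m B$ is a proper ideal of $B=R[I/a]$---is carried out cleanly: the representation of elements of $(I/a)B$ with vanishing degree-$0$ part and of $\m B$ with degree-$0$ part in $\m$ is exactly right, and clearing denominators together with the invertibility of $1-z_0$ in the local ring $R$ yields a genuine monic equation of integral dependence for $a$ over $I$. The passage from a maximal ideal $\n\supseteq J$ of $B$ to a dominating DVR in $K$ uses that $B$ is a finitely generated $R$-algebra (hence Noetherian) with $\mathrm{Frac}(B)=K$, so that $B_{\n}$ is a Noetherian local domain with the same quotient field; the existence of a dominating DVR is then the classical fact you cite. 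The final contradiction $v(I)>v(a)\ge v(I)$ is immediate.

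Two small points worth making explicit: you are tacitly assuming $a\neq 0$ (and $I\neq 0$) when forming $R[I/a]$, which is harmless since the case $a=0$ is trivial; and the ``standard fact'' that a Noetherian local domain is dominated by a DVR in its quotient field, while indeed standard (e.g.\ via Krull--Akizuki or via Rees valuations of the maximal ideal), is doing real work here and deserves the citation you give it.
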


 Lipman's original result \cite[Proposition 1.1]{Lip}  had the additional assumption that $R$ is universally catenary and the extra conclusion that the intersection can be taken over those discrete valuation rings in the quotient field of $R$ which contain $R$, have center $\m$, and such that the transcendence degree of $R_v/\m _v$ over $R/\m$ is $\dim(R) -1$, which is the largest it can possibly be.  

In the analogs we want to replace normalization by weak normalization. We let $R(I)$ denote the Rees algebra of an ideal $I$.  Recall that a regular ideal is an ideal that contains a regular element, that is, contains a non-zero divisor. An ideal on an analytic set $X$ is regular provided it does not vanish on a component of $X$.
 
%: local anal char of *I
\begin{theorem} \label{local anal char of *I} Suppose that either $I$ is a regular ideal in a local ring $\cO_{X,x}$ of an analytic space $X$ and $h \in \cO_{X,x}$, or $I$ is an arbitrary ideal and $h$ vanishes on each component of $X$ on which $I$ vanishes. Denote the weak normalization of the
blow-up of $X$ by $I$ by $WNB_I(X)$ with projection map $\pi$.  Then, $h$ is in the weak subintegral closure ${}^*I$ of $I$  if and only if $h\circ\pi\in \pi^*(I)\cO_{WNB_I(X),y}$  for all
$y\in\pi^{-1}(x)$.
\end{theorem}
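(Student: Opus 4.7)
The plan is to parallel the strategy used for Theorem \ref{prop: local I}, substituting the weakly normalized blow-up and weak subintegral closure for their normalized, integral counterparts. Set $A = \cO_{X,x}$, $R = A[It]$, and $S = {}^*R$, the weak normalization of the Rees algebra in its total quotient ring. By \cite[Corollary 3.5]{VL}, $S = \bigoplus_{n \ge 0} {}^*(I^n) t^n$, and by the $n=1$ case of \cite[Lemma 3.2]{VL}, $h \in {}^*I$ if and only if $ht \in S_1$. The analytic space $WNB_I(X)$ is $\Projan(S)$ (equivalently, the weak normalization of $B_I(X) = \Projan(R)$), and for $I = (a_1, \ldots, a_\ell)$ it is covered by the affine charts $\Specan(S_{(a_it)})$; on each such chart $\pi^*(I)$ is the principal ideal generated by $a_i$. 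The regularity hypothesis on $I$, or the vanishing hypothesis on $h$, is what makes these Rees constructions well-behaved on every component of $X$ in question.

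Translated through these identifications, the hypothesis that $h \circ \pi \in \pi^*(I) \cO_{WNB_I(X),y}$ for every $y \in \pi^{-1}(x)$ becomes the algebraic condition that $h \in IS_{(\q)}$ for every homogeneous prime $\q$ of $S$ with $\q \cap A = \m$ and $\q \not\supseteq S_+$. Applying Lemma \ref{lem: local char of int cl} to the integral extension $R \subseteq S$ reduces this further to $h \in IS_{(a_it)}$ for each $i$; since $a_j = a_i(a_jt/a_it) \in a_i S_{(a_it)}$, we have $IS_{(a_it)} = a_i S_{(a_it)}$, and the condition becomes simply $h/a_i \in S_{(a_it)}$ for each $i$. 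The forward direction of the theorem is then immediate: if $h \in {}^*I$ then $ht \in S_1$, so $h = a_i \cdot (ht)/(a_it) \in a_i S_{(a_it)}$ on every chart.

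For the converse, assume $h/a_i \in S_{(a_it)}$ for each $i$. Then for each $i$ there exist $n_i \ge 0$ and $b_i \in {}^*(I^{n_i})$ with $h/a_i = b_i t^{n_i}/(a_it)^{n_i}$, which unwinds to $h a_i^{n_i-1} = b_i \in {}^*(I^{n_i})$. On the overlap $D_+(a_it) \cap D_+(a_jt)$, the identity $(h/a_i)(a_it/a_jt) = h/a_j$ shows these local sections are compatible and glue to a global section of $\cO_{\Projan(S)}(1)$; viewed in $B[t]_1$, where $B$ is the total quotient ring of $A$, this section is precisely the element $ht$. The main obstacle is to promote this global section to an element of $S_1 = {}^*(I)t$, rather than of some larger saturation. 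In the normalization version (Theorem \ref{prop: local I}) the analogous step was handled by invoking that principal ideals in Krull domains are integrally closed, a fact with no direct substitute for weak subintegral closure. Instead, one exploits that $S$ is integral, and hence generated in degree one, over the standard graded algebra $R = A[It]$, together with the coherence of $\pi^*(I)$ on $WNB_I(X)$ and the properness of $\pi$, to identify $H^0(\Projan(S), \cO(1))$ with $S_1$ in a neighborhood of $\pi^{-1}(x)$. This yields $ht \in S_1$, and therefore $h \in {}^*I$, completing the proof.
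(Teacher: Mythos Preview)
Your forward direction is fine and essentially matches the paper's, but the converse has a genuine gap. Having reduced to $h/a_i \in S_{(a_it)}$ for each $i$, you correctly observe that the local data $ht/(a_it)$ glue to a global section of $\cO_{\Projan S}(1)$, and you correctly identify the obstacle: one must show this section already lives in $S_1$, not merely in the saturation $\bigoplus_n H^0(\Projan S,\cO(n))$. But the sentence ``one exploits that $S$ is integral, and hence generated in degree one, over $R$, together with coherence \ldots\ and properness \ldots\ to identify $H^0(\Projan S,\cO(1))$ with $S_1$'' is not a proof. Properness and coherence tell you only that $\pi_*\cO(1)$ is a coherent $\cO_X$-module; they do not give you the comparison $S_1 \cong (\pi_*\cO(1))_x$. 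For general (even reduced) graded rings this comparison fails, and weak normality of $S$ alone does not obviously rescue it: one would need to know that the extension $S \subset S[ht]$ is itself weakly subintegral, which is essentially the statement you are trying to prove. In other words, the step you label ``the main obstacle'' is the entire content of the converse, and you have restated it rather than discharged it. You also do not address the non-regular case beyond a remark that the hypotheses make things ``well-behaved.''

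The paper's argument is structurally different and avoids the saturation issue entirely. Rather than trying to place $ht$ directly in $S_1$, it first uses the normalized blow-up criterion (Theorem~\ref{L-J analytic}) to get $h \in \overline{I}$, so that $\Projan R(I+(h))$ sits finitely over $\Projan R(I)$ as the graph of $h/p$. The hypothesis that $h/p$ is holomorphic on $WNB_I(X)$ then forces this graph map to be a homeomorphism, hence the local ring extensions on the Projan's are subintegral. A short cone/decone argument (Steps~1 and~2) promotes this to a bijection $\spec R(I+(h)) \to \spec R(I)$, and \cite[Theorem~3.5]{VL} converts ``$R(I) \subset R(I+(h))$ is weakly subintegral'' into $h \in {}^*I$. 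The non-regular case is handled separately by restricting to the components where $I$ is nonzero and checking that the resulting equations extend. If you want to salvage your approach, the natural route is to imitate the paper's \emph{algebraic} proof (Theorem~\ref{local alg char of *I}): set $T = S[ht]$, show $S_{a_it} = T_{a_it}$ for all $i$, and then argue by contradiction on a minimal prime over the conductor $S:T$, using that $S$ is seminormal in $T$.
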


\begin{proof} Suppose $h$ is in ${}^*I$. Then $h$ satisfies a system of equations as in (\ref{LVeqns}); pull this system and $h$ up to
$WNB_I(X),y$, $y\in\pi^{-1}(x)$, then by the argument of 2.2 of \cite{VL}, the pull back of $h$ is in the ideal generated by the pullback of $I$.

Now suppose $I$ is a regular ideal, $h\circ\pi\in \pi^*(I)\cO_{WNB_I(X),y}$ for all
$y\in\pi^{-1}(x)$. By Teissier's result this implies that
$h\in\bar I$. Consider $\Projan(R(I+(h)))$ and $\Projan(R(I))$.  Since $h$ is integrally dependent on $I$,
it's not hard to see that locally $\Projan(R(I+(h)))$ is just the closure of the graph of $h/p$ where $p$ is a local generator of the pullback of $I$ on
$\Projan(R(I))$. More is true. By hypothesis, on $WNB_I(X)$, the quotient $h/p$ is smooth locally, and the map to the graph of $h/p$ must be finite surjective and
1-1. Now on the graph $h/p$ is holomorphic, so the inclusion of local rings of $\Projan(R(I))$ at $y$ into the local ring of $\Projan(R(I+(h)))$ at $y, h(y)/p(y)$ is a
subintegral extension.

Now we show that $R(I+(h))$ is a subintegral extension of $R(I)$.
We can think of the prime spectrums of these two rings as embedded in the product of $X$ and a
suitable affine space, $\IC^N$, the affine space for
$\spec(R(I+(h)))$ being one dimension bigger. We get an induced map of prime spectrums by projection from $\IC^{N+1}$ to $\IC^N$. This induced map of prime spectrums is a bijection.
We can see this in two steps.

Step 1. If we work on the sets where
${z_i\ne 0}$, where $\{z_1,\dots,z_N\}$ are coordinates on $\IC^N$ and
$\spec (R(I)) \subset X\times \IC^N$, then these subsets are locally products
of the corresponding open affines on the projective spaces
and the induced map respects the product structure.

Step 2.  From Step 1 we know that if we remove $X\times \{0\}$ from the prime spectrums, then we have a bijection; but we already have a bijection
on $X\times \{0\}$ so we have a bijection between $\spec(R(I+(h)))$ and $\spec(R(I))$.
Since we have a bijection of prime spectrums and the condition is a local one, we're done.
Now  \cite[Theorem 3.5]{VL} implies $h \in {}^* I$ in this case.

If $I$ is not a regular ideal, and $X'$ is the union of those components  of $X$ on which $I$ is not zero, then the above argument shows that $h$ is in the weak subintegral closure of the ideal $I$ induces on $X'$. Consider the set of equations satisfied by $h$ on $X'$. Pull these back to $X$. Now the right hand side of the equations may not a priori be zero--instead it may be some functions $g_i$ which vanish on $X'$. However, by hypothesis, $h$ and $I$ vanish on the components of $X$ off $X'$, so the $g_i$ must as well, since the left hand side of the equations vanish on these components. Hence $g_i$ vanish identically on $X$ and $h$ is in the weak subintegral closure of $I$.
\end{proof}

From part of the proof of Theorem \ref{local anal char of *I}, we extract the following proposition.

\begin{proposition}  \label{proposition homeo to subintegral} Suppose $I$ is a regular ideal in a local ring $\cO_{X,x}$ of an analytic space $X$ and $h \in \cO_{X,x}$, and  the inclusion of local rings of $\Projan(R(I))$ at $y$ into the local ring of $\Projan(R(I+(h)))$  at $y, h(y)/p(y)$ is a
subintegral extension, for all $y\in\pi^{-1}(x)$. Then $R(I+(h))$ is a subintegral extension of $R(I)$.
\end{proposition}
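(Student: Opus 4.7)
The plan is to verify two properties of the graded inclusion $R(I)\hookrightarrow R(I+(h))$: that it is integral, and that the induced map on prime spectra is a bijection with trivial residue field extensions. In characteristic zero over $\IC$, these two together amount to subintegrality.

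For integrality, I would first extract $h\in\bar I$ from the hypothesis. Each local extension at $y\in\pi^{-1}(x)$ is subintegral hence integral, so the pullback of $h$ lies in the pullback of $I$ on $WNB_I(X)$, and therefore also on $NB_I(X)$. Theorem \ref{L-J analytic} then gives $h\in\bar I$, whence $h^n\in\overline{I^n}$ for every $n\ge 1$, and consequently $R(I+(h))\subset\overline{R(I)}$, which is integral over $R(I)$.

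For the bijectivity of prime spectra, I would embed $\spec R(I)\subset X\times\IC^N$ using chosen generators $a_1,\ldots,a_N$ of $I$ (so the $\IC^N$-coordinates correspond to $a_1 t,\ldots,a_N t$), and $\spec R(I+(h))\subset X\times\IC^{N+1}$ by adjoining $ht$; the induced map is then the restriction of the projection $\IC^{N+1}\to\IC^N$. Decompose the source into the cone vertex $X\times\{0\}$ and its complement. On $X\times\{0\}$, both Rees algebras collapse to $\cO_{X,x}$ and the map is the identity, a trivial bijection. On the complement, cover by the distinguished affines $D(\zeta)$ for $\zeta\in\{a_1 t,\ldots,a_N t,ht\}$; on each such chart there is a product decomposition
\begin{equation*}
R(I+(h))[\zeta^{-1}] \;\cong\; R(I+(h))_{(\zeta)}[u,u^{-1}],\qquad u=\zeta,
\end{equation*}
and analogously for $R(I)$ when $\zeta\ne ht$, with the inclusion respecting this product (identity on the Laurent polynomial variable and canonical on the degree-zero factor). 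The degree-zero inclusion is a localization of the inclusion of local rings on $\Projan$, which is subintegral at every $y\in\pi^{-1}(x)$ by hypothesis and an isomorphism outside the exceptional locus of the blow-up. Since subintegrality localizes and is preserved by adjoining a Laurent polynomial variable, each chart yields the desired bijection of prime spectra with trivial residue field extensions; gluing these to the identity on the vertex gives the global bijection.

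The main obstacle is justifying that the pointwise subintegrality hypothesis on $\pi^{-1}(x)$ suffices globally on $\Projan$ and that the product decomposition is compatible with the projection $\IC^{N+1}\to\IC^N$; both points were indicated in Steps 1 and 2 of the proof of Theorem \ref{local anal char of *I}, and amount to careful local bookkeeping.
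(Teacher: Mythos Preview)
Your proposal is correct and follows essentially the same approach as the paper's proof, which is extremely terse: it simply observes that $h\in\bar I$ (making the projection finite) and then invokes Steps 1 and 2 of the proof of Theorem \ref{local anal char of *I}. You have spelled out both of these ingredients in more detail---in particular, your derivation of $h\in\bar I$ via the weak normalization and Theorem \ref{L-J analytic} is more explicit than the paper's bare assertion, and your chart-by-chart product decomposition is exactly what Step 1 amounts to. One small redundancy: since $h\in\bar I$ forces $ht$ to lie in any homogeneous prime containing all the $a_it$, the chart $D(ht)$ on $\spec R(I+(h))$ is already covered by the $D(a_it)$, so you need not treat it separately.
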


\begin{proof}  Since $I$ is regular, it follows that there is a 1-1 correspondence between components of
$\Projan(R(I))$, and $\Specan(R(I))$. As in the proof of Theorem \ref{local anal char of *I}, the map between the prime spectrums is induced by projection from a suitable $\IC^{N+1}$ onto $\IC^N$, and the hypothesis implies that this map is finite, as $h$ is in the integral closure of $I$. Now the result follows from Steps 1 and 2 of the proof of Theorem \ref{local anal char of *I}.
\end{proof}

We now present the algebraic version.

%: theorem  local alg char of *I
\begin{theorem} \label{local alg char of *I} Let $I$ be an ideal in a Noetherian
reduced ring   $(A, \m, k)$, $ R=A[It], S = {}^*R$,
and $a \in A$.  Then,
$$ a \in {}^*I \Leftrightarrow  a \in IS_{(\q)} \; \forall \q
\in \proj(S) \makebox{ such that } \q \cap A = \m.$$

\end{theorem}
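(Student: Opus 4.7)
The plan is to parallel the proof of Theorem \ref{prop: local I}, replacing the integral closure $\overline R$ by the weak normalization $S = {}^*R$, and to globalize the resulting local data using an algebraic analog of Steps 1--2 in the proof of Theorem \ref{local anal char of *I} (cf.\ Proposition \ref{proposition homeo to subintegral}).

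For the forward direction, if $a \in {}^*I$, then by the link between the weak subintegral closure of an ideal and the weak normalization of its Rees ring noted in the introduction (\cite[Lemma 3.2]{VL}), $at \in {}^*R = S$, so $at \in S_1$. For any $\q \in \proj(S)$ with $\q \cap A = \m$, Lemma \ref{lem: S+}(2) taken contrapositively (since $S_+ \not\subseteq \q$) forces some $a_i t \notin \q$; then $a/a_i = (at)/(a_i t) \in S_{(\q)}$ and $a = a_i(a/a_i) \in IS_{(\q)}$.

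For the reverse direction, assume $a \in IS_{(\q)}$ for every such $\q$. Lemma \ref{lem: local char of int cl} yields $a \in IS_{(a_i t)}$ for each $i$; since $a_j t \in R_1 \subseteq S_1$ for every $j$, we have $a_j/a_i \in S_{(a_i t)}$, so $IS_{(a_i t)} = a_i S_{(a_i t)}$ and $a/a_i \in S_{(a_i t)}$ for each $i$. Using $S \subseteq \overline R$, the hypothesis also gives $a \in I\overline R_{(\q')}$ for every $\q' \in \proj(\overline R)$ lying over $\m$ (take $\q = \q' \cap S$), so Theorem \ref{prop: local I} gives $a \in \overline I$. Set $R' = R[at] = A[I't]$ with $I' = I+(a)$; since $a \in \overline I$, $R' \subseteq \overline R$. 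It suffices to show that $R \subseteq R'$ is weakly subintegral, for then $R' \subseteq S$ by maximality of $S$, so $at \in S_1$ and $a \in {}^*_Q(I) \cap A = {}^*I$ (the last equality uses that $A$ is reduced).

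To prove $R \subseteq R'$ weakly subintegral, I argue locally first. For each $i$, since $a_i t$ is a unit in $R_{a_i t}$, one has $R'_{a_i t} = R_{a_i t}[a/a_i]$; the containment $a/a_i \in S_{(a_i t)} \subseteq S_{a_i t}$ then gives $R'_{a_i t} \subseteq S_{a_i t}$. The localization $R_{a_i t} \subseteq S_{a_i t}$ of the weakly subintegral extension $R \subseteq S$ is again weakly subintegral, so the intermediate integral extension $R_{a_i t} \subseteq R'_{a_i t}$ is weakly subintegral. To globalize, I adapt Steps 1--2 of the proof of Theorem \ref{local anal char of *I}: on $\spec(R) \setminus V(R_+) = \bigcup_i \spec(R_{a_i t})$ the local bijections with purely inseparable residue fields glue, while over $V(R_+)$ a homogeneous integral equation $(at)^n + c_1(at)^{n-1} + \cdots + c_n = 0$ for $at$ over $R$ (with $c_k \in R_k \subseteq R_+$) reduces modulo any $\P \supseteq R_+$ to $\overline{at}^n = 0$, so the fiber $R' \otimes_R k(\P)$ is $k(\P)[X]/(X^n)$: a local ring with a single prime and residue field $k(\P)$. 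Patching the two pieces yields a bijection $\spec(R') \to \spec(R)$ with purely inseparable residue extensions, completing the proof that $R \subseteq R'$ is weakly subintegral. The main obstacle is precisely this gluing, in which the chartwise weak subintegrality on $D(a_i t)$ must be combined with the single-point fibers over the irrelevant locus to assemble a genuine bijection of spectra with the correct residue structure.
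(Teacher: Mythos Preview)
Your proof is correct, and both directions take a somewhat different route from the paper's.

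For the forward direction, the paper argues that $IS_{a_it}=a_iS_{a_it}$ is a principal regular ideal in the weakly normal ring $S_{a_it}$, hence weakly normal by \cite[Remark 2.2]{VL}, so $a\in{}^*I\subseteq{}^*(IS_{a_it})=IS_{a_it}$. Your argument is shorter: you use the Vitulli--Leahy link directly to get $at\in S_1$, whence $a/a_i=(at)/(a_it)\in S_{(\q)}$.

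For the reverse direction the difference is more substantial. The paper sets $T=S[at]$ and proves $S=T$ by a conductor argument: if $S\subsetneq T$ then $S:T$ is a radical ideal (since $S$ is seminormal in $T$), and a minimal overprime $\q$ of $S:T$ must contain $S_+$; analysis of the unique prime of $T$ over $\q$ then forces $S_\q=T_\q$, a contradiction. You instead show that $R\subseteq R'=R[at]$ is weakly subintegral by a chart-and-fiber argument: on each $D(a_it)$ the inclusion $R_{a_it}\subseteq R'_{a_it}$ sits inside the weakly subintegral $R_{a_it}\subseteq S_{a_it}$, and over $V(R_+)$ the homogeneous integral equation for $at$ collapses so that every fiber is $k(\p)[X]/(X^m)$. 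This is the exact algebraic counterpart of the gluing in Steps~1--2 of Theorem~\ref{local anal char of *I} and Proposition~\ref{proposition homeo to subintegral}; it is more geometric and makes the parallel with the analytic proof explicit. The paper's argument, by contrast, exploits the internal structure of the seminormal extension $S\subseteq T$ and never leaves the graded-ring level. Both approaches ultimately yield $at\in S_1$, but yours has the advantage of showing directly why the two-step Projan/Specan passage in the analytic case has a clean algebraic avatar.
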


\begin{proof} 
  Choose  generators $a_1, \ldots, a_{\ell}$ of $I$.
  %as in \cite[2.12 Lemme]{LT}.   
   First suppose that $a \in
{}^*I$.  Let $\q \in \proj(S)$ be such that $\q \cap A = \m$.  Then
$a_it \notin \q$ for some $i$ by Lemma \ref{lem: S+}.  Then, $IS_{a_it}
= a_iS_{a_it}$ is a principal regular ideal in the weakly normal ring
$S_{a_it}$ and hence $a_iS_{a_it}$ is weakly normal by \cite[Remark 2.2
]{VL}.   Since $a \in {}^*I \subseteq {}^*(IS_{a_it})$ we must have $a
\in IS_{a_it}$.  By a degree comparison, $a \in IS_{( a_it )}$.  Since
$a_it \notin \q$ we have $a \in IS_{(\q)}$.
 
Now assume that $a \in IS_{(\q)} \; \forall \q
\in \proj(S) \makebox{ such that } \q \cap A = \m$.  In particular, $a \in \II$ by Proposition \ref{prop: local I} and $S \subseteq T: = S[at]$ is a finite integral extension.   We also know that $a \in \cap IS_{(a_it)} = \cap a_iS_{(a_it)}$ 
by Lemma \ref{lem: local char of int cl}.    Hence $at/(a_it) \in S_{(a_it)} \; (i = 1, \ldots \ell)$ and 
 $at \in S_{(a_it)} \; (i=1, \ldots, \ell)$.    We just demonstrated that 
$S_{a_it} = T_{a_it} \; (i=1, \ldots, \ell)$.  Hence $S_{\q} = T_{\q}$ for every $\q \in \proj(S)$ by Lemma \ref{lem: S+}.   Just suppose that $S \subsetneq T$.  Then $S:T$ is a nonzero homogeneous radical ideal  of $T$ and is contained in $S$, since $S$ is weakly normal and hence seminormal in $T$.  Let $\q $ be a minimal overprime of $S:T$ in $S$.  Then $\q$ is homogeneous and $S_{\q} : T_{\q} = \q S_{\q} $ as an ideal in $S_{\q}$.  By the above observations, $S_+ \subseteq \q$ and $\q = \q \cap S_0 + S_+$.  Notice that $S_0 = T_0$.  Let $Q$ be any overprime of $\q$ in $T$.  We have $T_+ \subseteq Q$ by Lemma \ref{lem: S+}.  Hence $Q$ is homogeneous and $Q = Q \cap T_0 \, + T_+ = \q \cap S_0 + T_+$, that is, there is a unique prime ideal in $T$ lying over $\q$.  Let $S' = S_{\q}, T' = T_{\q} = T_Q$ and $\q ' = \q S_{\q}, Q' = QT_Q$.  Then $(S', \q ') \subseteq (T', Q')$ is a finite integral extension of reduced local rings (not necessarily Noetherian) such that $S':T' = \q ' = Q'$.  Furthermore, $S_0/(\q \cap S_0) \cong T/Q$, which implies $S' = T'$, a contradiction.
\end{proof}

We offer some corollaries to the these results.

%: lem homeo to subintegral
\begin{lemma} \label{lem homeo to subintegral} Suppose that $(A, \m, k)$ is the local ring of an algebraic variety over an algebraically closed field of 
characteristic 0 , $I$ is a 0-dimensional ideal in $A$, and $h \in A$.  Finally, let $R = A[It]$ and $S = A[It, ht]$.   If the induced map $\proj(S) \to \proj(R)$ is a homeomorphism, then $R \subset S$ is a weakly subintegral extension, that is, $h \in {}^*I$. 
\end{lemma}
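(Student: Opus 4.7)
The plan is to verify, via Theorem \ref{local alg char of *I} applied to $R^* := {}^*R$, that $h \in I R^*_{(\q)}$ for every $\q \in \proj(R^*)$ with $\q \cap A = \m$. Fix generators $I = (a_1, \ldots, a_\ell)$. I first show $h \in \overline I$: both $\proj(S)$ and $\proj(R)$ are projective over $\spec A$, so the morphism $\proj(S) \to \proj(R)$ is proper, and the homeomorphism hypothesis makes it quasi-finite and hence finite. On the standard affine chart $D_+(a_i t)$ this becomes a module-finite extension $R_{(a_i t)} = A[I/a_i] \subseteq S_{(a_i t)} = A[I/a_i][h/a_i]$, so $h/a_i$ is integral over $A[I/a_i]$; clearing denominators homogeneously in the resulting monic equation yields an equation of integral dependence $h^n + \sum_j d_j h^{n-j} = 0$ with $d_j \in I^j$. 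Thus $h \in \overline I$ and $S \subseteq \overline R$.

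Next I argue that $R_{(a_i t)} \subseteq S_{(a_i t)}$ is a weakly subintegral extension for each $i$. The restriction of the homeomorphism $\proj(S) \to \proj(R)$ to $D_+(a_i t)$ is a bijective finite morphism $\spec S_{(a_i t)} \to \spec R_{(a_i t)}$ of Noetherian reduced rings, each a localization of a finitely generated $k$-algebra. Since $k$ is algebraically closed of characteristic $0$, the discussion at the start of Section 2 shows that bijectivity of prime spectrums under an integral extension of such rings forces weak subintegrality. Because weak normalization commutes with the graded localization $(\cdot)_{(a_i t)}$, we conclude $S_{(a_i t)} \subseteq (R^*)_{(a_i t)}$, and in particular $h/a_i \in (R^*)_{(a_i t)}$. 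Now given any $\q \in \proj(R^*)$ with $\q \cap A = \m$, the contrapositive of Lemma \ref{lem: S+}(2) yields some $a_i t \notin \q$; for this $i$, $h = a_i \cdot (h/a_i) \in I \cdot (R^*)_{(a_i t)} \subseteq I \cdot (R^*)_{(\q)}$. Theorem \ref{local alg char of *I} then gives $h \in {}^*I$, which is exactly the assertion that $R \subseteq S$ is weakly subintegral.

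The main obstacle I anticipate is in the second step, namely translating a bijection of prime spectrums on each affine chart into weak subintegrality of the corresponding ring extension. The Preliminaries treat this implication for finitely generated reduced $k$-algebras and for local rings of complex-analytic spaces, but the rings $R_{(a_i t)} = A[I/a_i]$ appearing here are localizations of finitely generated $k$-algebras rather than either of those, so one must verify that the generic-preimage counting argument (degree of the extension of total quotient rings $=$ number of preimages of a general point) still applies in this slightly broader setting. The zero-dimensionality of $I$ enters implicitly: it guarantees that the exceptional fibers of both blow-ups lie over $\m$, so the primes $\q$ being tested in Theorem \ref{local alg char of *I} are precisely those supported on the fibers controlled by the homeomorphism hypothesis.
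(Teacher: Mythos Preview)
Your proof is correct and follows essentially the same route as the paper's: both reduce via Theorem~\ref{local alg char of *I} (together with Lemma~\ref{lem: local char of int cl}, or equivalently your direct appeal to Lemma~\ref{lem: S+}) to checking $h \in I\,({}^*R)_{(a_i t)}$ on each standard chart, and both deduce this from the homeomorphism forcing $R_{(a_i t)} \subset S_{(a_i t)}$ to be weakly subintegral over an algebraically closed field of characteristic~0. Your explicit verification that $h \in \overline{I}$ via properness and finiteness, and the obstacle you flag about extending the bijection-implies-weak-subintegrality argument to localizations of finitely generated $k$-algebras, are details the paper's proof simply asserts without elaboration.
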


\begin{proof}
If $\dim(A) = 0$ there is nothing to prove.  So assume the $\dim(A) > 0$ and choose generators $g_1, \ldots , g_{\ell}$ of $I$.  Let $T = {}^*R$.  By Lemma \ref{lem: local char of int cl}  and Theorem \ref{local alg char of *I} it suffices to check that $ht \in IT_{(g_it)}$ for $i = 1, \ldots , \ell$.  Since the blow-ups are homeomorphic and we are working over an algebraically closed field of characteristic 0, we know that $R_{(g_it)} \subset S_{(g_it)}$ is  a weakly subintegral extension for $i = 1, \ldots , \ell$ and hence $S_{(g_it)} \subset {}^*(R_{(g_it)}) \subset T_{(g_it)}$.   We also know that $h= \frac{ht}{g_it} g_i \in IS_{(g_it)} \subset IT_{(g_it)}$. This finishes the proof.
\end{proof}

In the analytic case we have the following consequence.

\begin{corollary} Suppose $\mathcal{I}$ is a coherent sheaf of ideals on an analytic space $X$. Form the ideal sheaf  $\; {}^*\mathcal{I}$ by taking the weak subintegral closure of each stalk.
Then the result is a coherent sheaf.
\end{corollary}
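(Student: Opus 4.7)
The plan is to realize ${}^*\mathcal{I}$ locally as the kernel of a morphism between coherent $\mathcal{O}_X$-modules and then invoke Grauert's coherence theorem for proper direct images. Since coherence is a local property, I will work in a neighborhood of an arbitrary point $x\in X$.

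Suppose first that $\mathcal{I}$ is regular near $x$. Let $\pi\colon Y\to X$ denote the weak normalized blow-up of $X$ along $\mathcal{I}$. The map $\pi$ is proper, being the composition of a projective blow-up with a finite weak normalization, and the pullback $\pi^*\mathcal{I}\cdot\mathcal{O}_Y$ is a coherent, locally invertible ideal sheaf on $Y$. The quotient $\mathcal{O}_Y/\pi^*\mathcal{I}\cdot\mathcal{O}_Y$ is then coherent on $Y$, so by Grauert's coherence theorem its direct image
$$\mathcal{F}:=\pi_{*}\bigl(\mathcal{O}_Y/\pi^*\mathcal{I}\cdot\mathcal{O}_Y\bigr)$$
is a coherent $\mathcal{O}_X$-module. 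The canonical composition
$$\varphi\colon \mathcal{O}_X\longrightarrow \pi_{*}\mathcal{O}_Y\longrightarrow \mathcal{F}$$
is a morphism of coherent sheaves whose kernel, by Theorem \ref{local anal char of *I}, agrees stalkwise with ${}^*\mathcal{I}$. Since the kernel of a morphism of coherent sheaves is coherent, ${}^*\mathcal{I}$ is coherent near $x$ in this case.

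For the general case, shrink $X$ so that it has only finitely many irreducible components and write $X=X'\cup X''$, where $X'$ is the union of the components on which $\mathcal{I}$ does not vanish identically and $X''$ is the union of the remaining components. Let $\mathcal{J}$ be the coherent ideal sheaf of $X''$. Any section $h$ of ${}^*\mathcal{I}$ satisfies an equation of the form (\ref{LVeqns}); restricting this equation to $X''$, where $\mathcal{I}$ vanishes, forces $h$ to be nilpotent there and hence to lie in $\mathcal{J}$. Moreover, as in the final paragraph of the proof of Theorem \ref{local anal char of *I}, $h$ lies in ${}^*\mathcal{I}$ if and only if $h\in\mathcal{J}$ and $h|_{X'}$ lies in ${}^*(\mathcal{I}|_{X'})$. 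Since $\mathcal{I}|_{X'}$ is regular, the preceding case gives the coherence of ${}^*(\mathcal{I}|_{X'})$ on $X'$; pushing this forward by the closed immersion $X'\hookrightarrow X$ and intersecting with $\mathcal{J}$ realizes ${}^*\mathcal{I}$ as a coherent subsheaf of $\mathcal{O}_X$ in a neighborhood of $x$.

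The main obstacle is geometric rather than formal: one must verify that the weak normalized blow-up $\pi$ is indeed proper and that $\pi^*\mathcal{I}\cdot\mathcal{O}_Y$ is coherent, so that Grauert's theorem can be applied. Once these are in hand, the rest is standard sheaf-theoretic bookkeeping, with the local irreducible decomposition handling the possibility that $\mathcal{I}$ fails to be regular.
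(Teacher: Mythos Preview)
Your argument is essentially the paper's own: pull $\mathcal{I}$ back to the weak normalized blow-up, push the resulting coherent sheaf down via Grauert, and identify ${}^*\mathcal{I}$ with the preimage in $\mathcal{O}_X$; your kernel formulation of $\varphi$ is just a rephrasing of ``push down and intersect with $\mathcal{O}_X$.'' The paper's proof is three lines and does not separate out the non-regular case, whereas you handle it explicitly via the component decomposition, which is a reasonable elaboration given the hypotheses of Theorem~\ref{local anal char of *I}.

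One small imprecision: in the non-regular case you write ``pushing this forward by the closed immersion $X'\hookrightarrow X$ and intersecting with $\mathcal{J}$.'' What you actually need is the \emph{preimage} of $i_*\bigl({}^*(\mathcal{I}|_{X'})\bigr)$ under the restriction map $\mathcal{O}_X\to i_*\mathcal{O}_{X'}$, then intersected with $\mathcal{J}$; the pushforward alone lives in $i_*\mathcal{O}_{X'}$, not in $\mathcal{O}_X$. This is a phrasing issue rather than a gap, since the preimage is the kernel of $\mathcal{O}_X\to i_*\mathcal{O}_{X'}/i_*\bigl({}^*(\mathcal{I}|_{X'})\bigr)$ and hence coherent. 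Also note that ``$h$ nilpotent on $X''$ implies $h\in\mathcal{J}$'' uses that $X$ is reduced, an assumption implicit throughout the paper since weak normalization is only defined in the reduced setting.
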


\begin{proof} Pullback $\mathcal{I}$  to $WNB_{\mathcal{I}}(X)$; there it generates a coherent sheaf, 
push the sheaf down to $X$ and intersect with $O_X$, the result is ${}^*\mathcal{I}$ by Theorem \ref{local anal char of *I}.
\end{proof}

\section{The Ideal $I_{>}$ and Connections with Reductions}

Recall that if $J \subset I$ are finitely generated ideals such that $\overline{J}=\overline{I}$ then,  $J$ is a {\it reduction} of $I$. 
 
%: notation I greater than
\begin{notn}
For an ideal $I$ in a Noetherian ring $A$ we let
$$I_> =  \{ a \in A \mid \overline{v}_I(a) > 1 \}.$$  
In general, $I_>$ is an ideal of $A$ and 
a subideal of $\overline{I}$.
\end{notn}
 Let $I$ be a regular  ideal in a Noetherian ring $A$ (i.e., $I$ contains a non-zero divisor) and $a \in A$.  The asymptotic Samuel function $\overline{v}_I$ is determined by the Rees valuations of $I$.  Namely,
$$\overline{v}_I(a) = \min_j \left\{ \frac{v_j(a)}{v_j(I)}
\right\},$$ where $v_j(I) = \min \{ v_j(b) \mid b \in I \} $ (see \cite[Lemma 10.1.5]{H-S}).
Recall that $\overline{v}_I = \overline{v}_J$ whenever $\overline{J}  = \overline{I}$ (see \cite[Cor. 11.9]{Mc}).  This immediately implies that $J_> = I_>$ whenever $\overline{J}  = \overline{I}$.
\medskip

We now prove a quick lemma.

\begin{lemma}  Let $I$ be a regular ideal in a Noetherian ring $A$. 
Then,
$$I_> = \bigcap_i  \m_iIV_i \cap A.$$
In particular, $I_>$ is an integrally closed ideal.
\begin{proof}  Let $a \in A$.  Notice that $a \in I_>$ if and
only if $v_j(a)  > v_j(I)$ for all Rees valuations $v_j$ of $I$. 
Since $(V_j,\m_j)$ is a discrete rank one valuation ring the
latter is true if and only if $a \in \m_jIV_j$ for all
$(V_j,\m_j) \in \vv(I)$ .  
\end{proof}

\end{lemma}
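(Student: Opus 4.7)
The plan is to reduce everything to the DVR calculation and then argue integral closure by contraction from valuation rings. The lemma has two assertions, and I will handle them separately, though the second follows immediately from the first.

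For the set equality, fix $a \in A$. By the displayed formula for $\overline{v}_I$, the condition $a \in I_>$ is equivalent to $v_j(a) > v_j(I)$ for every $j$. The point is that in the DVR $(V_j,\m_j)$, the extended ideal $IV_j$ is principal, say $IV_j = \m_j^{v_j(I)}$, so $\m_j IV_j = \m_j^{v_j(I)+1}$. Since $V_j$ has rank one discrete value group, membership in $\m_j^{v_j(I)+1}$ is equivalent to $v_j(a) \ge v_j(I)+1$, i.e.\ to $v_j(a) > v_j(I)$. Thus $a \in I_>$ iff $a \in \m_j IV_j$ for every $j$, i.e.\ iff $a \in \bigcap_j \m_j IV_j \cap A$, which is the claim.

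For the ``integrally closed'' assertion, I will use two standard facts: (a) every ideal in a valuation ring is integrally closed, and (b) the contraction of an integrally closed ideal under a ring homomorphism is integrally closed, and (c) an arbitrary intersection of integrally closed ideals of $A$ is again integrally closed. Applying (a) to $\m_j IV_j \subseteq V_j$, then (b) to the inclusion $A \hookrightarrow V_j$, shows that each $\m_j IV_j \cap A$ is integrally closed in $A$; then (c) gives that their intersection, which by the first part is $I_>$, is integrally closed.

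There is no real obstacle here: the only subtle point is verifying $\m_j IV_j = \m_j^{v_j(I)+1}$, which is immediate once one recalls that $V_j$ is a DVR and $v_j(I)$ is by definition the $\m_j$-adic order of a generator of $IV_j$. Note also that the regularity hypothesis on $I$ is what guarantees that the set $\vv(I)$ of Rees valuations exists and that the formula $\overline{v}_I(a) = \min_j v_j(a)/v_j(I)$ cited from \cite[Lemma 10.1.5]{H-S} applies, so nothing needs to be done separately to justify its use.
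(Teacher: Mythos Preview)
Your proof is correct and follows essentially the same approach as the paper's: both arguments reduce the equality to the observation that in a rank-one DVR $(V_j,\m_j)$ the condition $v_j(a) > v_j(I)$ is equivalent to $a \in \m_j IV_j$. You simply make explicit what the paper leaves implicit, namely the identification $\m_j IV_j = \m_j^{v_j(I)+1}$ and the standard integral-closure facts (a)--(c) justifying the ``In particular'' clause.
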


We would like to explore what this lemma implies about monomial ideals.  First we need a lemma.

\begin{lemma}  Let $I$ be a nonzero monomial ideal in a polynomial ring over a field. Then, $I_>$ is again a monomial ideal.
\end{lemma}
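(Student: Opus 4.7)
The plan is to exploit the fact that Rees valuations of a monomial ideal in a polynomial ring $k[\xvec{n}]$ are themselves monomial valuations, that is, each Rees valuation $v_j$ is determined by its values on the variables $x_1,\ldots,x_n$ and satisfies
\[
v_j\Bigl(\sum_\alpha c_\alpha \x^\alpha\Bigr)=\min\{v_j(\x^\alpha):c_\alpha\ne 0\}
\]
for any polynomial $\sum c_\alpha \x^\alpha$. This fact is standard (see, e.g., \cite[Proposition 10.3.2]{H-S} or the toric description of the normalized blow-up of a monomial ideal, where the Rees valuations correspond to the primitive inner normals of the bounded facets of the Newton polyhedron of $I$). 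I would cite it rather than reprove it.

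Given this, I would argue as follows. Write $a=\sum_\alpha c_\alpha \x^\alpha\in A$. By the previous lemma and the formula $\overline{v}_I(a)=\min_j\{v_j(a)/v_j(I)\}$, membership $a\in I_>$ is equivalent to $v_j(a)>v_j(I)$ for every Rees valuation $v_j$ of $I$. Since each $v_j$ is monomial, $v_j(a)=\min_{c_\alpha\ne 0}v_j(\x^\alpha)$, so the condition $v_j(a)>v_j(I)$ for all $j$ is equivalent to $v_j(\x^\alpha)>v_j(I)$ for all $j$ and every monomial $\x^\alpha$ appearing in $a$ with nonzero coefficient. In other words, $a\in I_>$ if and only if each monomial summand $\x^\alpha$ of $a$ lies in $I_>$. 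Hence $I_>$ is generated by the set of monomials it contains, and so is a monomial ideal.

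The only potentially delicate step is the appeal to the monomial nature of the Rees valuations of $I$; everything else is a direct computation from the defining inequality $\overline{v}_I(a)>1$. If one prefers a self-contained argument, one can instead use the normalized blow-up $X=\mathrm{Proj}(\overline{R(I)})$, which for a monomial ideal is a toric variety covered by affine toric charts whose defining rings are generated by Laurent monomials; the valuation rings $V_j$ arise from torus-invariant divisors on $X$ and so kill all but the minimum-weight monomial of any polynomial. Either route yields the claim.
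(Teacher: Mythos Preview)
Your proof is correct and follows essentially the same route as the paper's own argument: both invoke that the Rees valuations of a monomial ideal are monomial (corresponding to the bounded facets of the Newton polyhedron), observe that $v_j(f)$ equals the minimum of $v_j$ over the monomials appearing in $f$, and conclude that $f\in I_>$ if and only if each of its monomial summands lies in $I_>$. Your write-up is slightly more explicit about the reduction to $v_j(a)>v_j(I)$ via the preceding lemma and the formula for $\overline{v}_I$, but the substance is the same.
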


\begin{proof}  Recall that the Rees valuations of $I$ are monomial valuations and correspond to the bounded facets of the Newton polyhedron of $I$.  A polynomial $f$ is in $I_>$ if and only if $v(f) > v(I)$ for all $v \in \mathcal{RV} (I)$.   Now $v(f) = \inf\{v(\mu) \}$, where the infimum is taken over all monomials occurring in $f$. We may deduce that $f \in I_>$ if and only if every monomial occurring in $f$ is in $I_>$ and, hence, $I_>$ is a monomial ideal.  
\end{proof}

Let   $I \subset k[\xvec{n}]$ be a monomial ideal in a polynomial ring over a field $k$.  Then, $I_>$  is generated by all monomials whose exponent vectors do not lie on any bounded facet of the Newton polyhedron of $I$.  For example, if $I = (x^2, y^2) \subset k[x,y]$ and $\m = (x, y)$ then $I_> = \m ^3$.

It is well known that the integral closure of an ideal $I$ of a Noetherian ring $A$ consists of all elements $a \in A$ such that $\overline{v}_I(a)  \ge 1$ (e.g. see \cite{Mc} Chapter XI or \cite{H-S} Section 10.1).  One may ask is something similar holds for weak subintegral closure.   In 1999 at a Route 81 (New York) Conference,  D. Lantz conjectured that if $I$ is an $\m$-primary ideal in  a 2-dimensional regular local ring $(A, \m)$, then ${}^*I$ contains all elements $a \in A$ such that $\overline{v}_I(a)  > 1$.  In \cite{V} the second author proved that if $I$ is a monomial ideal in a polynomial ring $k[\xvec{n}]$ that is primary to the ideal $(\xvec{n})$ then ${}^*I$ contains all elements $a \in A$ such that $\overline{v}_I(a)  > 1$.  We now prove Lantz's conjecture in general. 

%: prop:lantz
 \begin{proposition}\label{prop:lantz}  Let $I$ be an ideal of a
Noetherian  ring $A$.  Then, $I_> \subseteq {}^*I$.
\end{proposition}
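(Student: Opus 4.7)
The plan is to establish $a \in {}^*I$ for any given $a \in I_>$ by means of Theorem \ref{local alg char of *I}. After localizing at an arbitrary maximal ideal of $A$ (and passing, if necessary, to $A_{\mathrm{red}}$), I would reduce to the case of a Noetherian reduced local ring $(A,\m)$. Fix generators $a_1, \ldots, a_\ell$ of $I$ and write $R = A[It]$, $T = \overline{R}$, $S = {}^*R$. Theorem \ref{local alg char of *I} reduces the goal to showing $a \in IS_{(\q)}$ for every $\q \in \proj(S)$ with $\q \cap A = \m$, and Lemma \ref{lem: local char of int cl} further reduces this to verifying $a/a_i \in S_{(a_it)}$ for each $i$. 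Since $a \in \overline{I}$ (as $I_> \subseteq \overline{I}$), Theorem \ref{prop: local I} already supplies $a/a_i \in T_{(a_it)}$; what remains is to upgrade this membership in the integral closure to membership in the weak normalization using the strict inequality $v(a) > v(I)$.

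Setting $R_i = R_{(a_it)}$, $T_i = T_{(a_it)}$, $S_i = S_{(a_it)}$, weak normalization commutes with localization at the homogeneous multiplicative set generated by $a_it$, so $S_i$ is the weak normalization of $R_i$ in $T_i$. By the definition of weak normalization recalled at the start of Section~2, it suffices to verify that, for every $\p \in \spec(R_i)$, some power of $a/a_i$ lies in $(R_i)_\p$ plus the Jacobson radical of $(T_i)_\p$. If $a_i \notin \p$, then $a_i$ is a unit in $(R_i)_\p$ and $a/a_i \in (R_i)_\p$. If $a_i \in \p$, every prime of $T_i$ lying over $\p$ contains $a_i$ and hence contains some minimal prime $\p_j$ of $a_iT_i$. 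Each such $\p_j$ corresponds to a Rees valuation $v_j$ of $I$ with $v_j(I) = v_j(a_i)$, so the hypothesis $a \in I_>$ yields $v_j(a) > v_j(a_i)$, i.e., $a/a_i \in \p_j$. Thus $a/a_i$ belongs to every prime of $T_i$ lying over $\p$, and so lies in the Jacobson radical of $(T_i)_\p$, completing the check.

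The hard part will be confirming that the minimal primes of $a_iT_i$ are in bijection with the Rees valuations of $I$ centered on the affine chart $\spec(T_i)$ of the normalized blow-up, and that $v(I) = v(a_i)$ at each of them. This should follow from the fact that $a_j/a_i \in T_i$ for every $j$, which forces $v(a_j) \ge v(a_i)$ at every height-one valuation $v$ of $T_i$ vanishing on $a_i$, so that $v(I) = \min_j v(a_j) = v(a_i)$. A subsidiary point is that weak normalization commutes with localization at homogeneous multiplicative sets, which can be seen directly from the element-level Reid-Roberts-Singh characterization.
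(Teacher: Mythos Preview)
Your approach via Theorem~\ref{local alg char of *I} is viable and essentially correct, but it is genuinely different from---and considerably more elaborate than---the paper's argument. The paper gives a two-line elementary proof that works directly from the asymptotic Samuel function, with no blow-ups, no Rees valuations, and no reducedness hypothesis: since $\overline{v}_I(a)>1$, one has $a^n\in I^{n+1}$ for all $n>q$ (some $q$), and then the RRS system~(\ref{LVeqns}) is satisfied by setting $a_1=\cdots=a_q=0$, $a_{q+1}=-a^{q+1}$, and solving recursively for $a_{q+2},\ldots,a_{2q+1}$. An easy induction shows each $a_i$ so produced is an integer multiple of $a^i$, and $a^i\in I^{i+1}\subseteq I^i$ for $i>q$, so the system holds. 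That is the entire proof.

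Your route is in fact the algebraic analogue of the alternate \emph{analytic} argument the paper sketches immediately after the proof (pulling $a$ back to the weakly normalized blow-up and observing that $a/p$ vanishes along the exceptional divisor). What you gain is a conceptual picture tying $I_>$ to the geometry of $\proj({}^*R)$; what you pay is the machinery of Section~3 plus several localization compatibilities. One genuine gap to flag: your reduction ``passing, if necessary, to $A_{\mathrm{red}}$'' is not free. Theorem~\ref{local alg char of *I} requires reducedness, but knowing $\bar a\in{}^*(IA_{\mathrm{red}})$ only gives RRS equations that hold modulo nilpotents in $A$, and you would still need to argue that the nilpotent defects can be absorbed. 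The paper's direct construction sidesteps this entirely and delivers the proposition for arbitrary Noetherian $A$.
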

\begin{proof}  Suppose that $a \in I_>$.  Then

$$ \lim_{n \rightarrow \infty} \frac{\ord_I(a^n)}{n} = 1 + 2\epsilon$$
 for some
positive $\epsilon$.  In particular there is a positive integer $q$ such that $\frac{\ord_I(a^n)}{n} > 1 + \epsilon$ for all $n > q$.  Thus
$\ord_I(a^n) > n$, and hence $a^n\in I^{n+1}$,  for all $n > q$. 

Now we can construct equations showing that $a$ is weakly subintegral over $I$ as follows: Define $a_0 = a_1 = \cdots = a_{q} = 0$ and $a_{q+1} = -a^{q+1}$.  Define $a_{q+i}$ recursively for $i = 2,  \ldots , q+1$ so that
the sequence of equations we numbered as (1) is satisfied for $a$. 
\end{proof}

\noindent\textbf{Observation} It is well known that $a \in \overline{I}$ if and only if there exists an integer $k$ 
%and $N_0$ depending on $r$ 
such that $a^n\in I^{n-k}$ for  all $n>k$ (see, for example, \cite {H-S}, Cor. 6.8.11).  The proof of \ref{prop:lantz} shows that $a \in I_>$ implies we can take $k=-1$.   The argument given in the proof of \ref{prop:lantz} shows that if there exists a nonnegative integer $k$ such that $a^n \in I^n$ for all $n > k$ then $a \in \II$.

In the complex analytic case, an alternate proof of \ref{prop:lantz} can be given using Theorem  \ref{local anal char of *I}, and the connection between $\overline{v}_I$ and the Rees valuations of $I$ as follows. Pullback $a$ and $I$ to $WNB_{ I}(X)$, and consider $a/p$, $p$  a local generator of the pullback of $I$ on
blow-up by $I$. Then  $\overline{v}_I(a)>1$ implies that the quotient $a/p$ is zero when $p$ is 
zero, hence continuous on $WNB_{ I}(X)$, hence analytic from seminormality. This implies that the pullback of $a$ to $WNB_{ I}(X)$ is  in the pullback of $I$ and the result follows from \ref{local anal char of *I}.

\medskip

One immediate consequence of generalization of  Lantz's conjecture is the following. 

%: cor:lantz
\begin{cor}  \label{cor:lantz} Let $I$ be an ideal of a Noetherian  ring $A$. 
Then, $$I_> \subseteq \bigcap_{J \in \mathcal{MR}(I)} {}^*J.$$

\end{cor}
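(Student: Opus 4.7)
The plan is to reduce the corollary directly to Proposition \ref{prop:lantz} by exploiting the invariance of the asymptotic Samuel function (and hence of the ideal $I_>$) under passage to a reduction.

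First, I would recall the observation made just before the statement of the lemma in the section: if $\overline{J} = \overline{I}$, then $\overline{v}_J = \overline{v}_I$, which is \cite[Cor.~11.9]{Mc}, and consequently $J_> = I_>$. Since a reduction $J$ of $I$ is by definition a subideal of $I$ with $\overline{J} = \overline{I}$, this invariance applies to every $J \in \mathcal{MR}(I)$.

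Next, I would fix an arbitrary $J \in \mathcal{MR}(I)$ and apply Proposition \ref{prop:lantz} to $J$ itself, obtaining $J_> \subseteq {}^*J$. Combined with $J_> = I_>$ from the previous step, this yields $I_> \subseteq {}^*J$. Since $J$ was arbitrary in $\mathcal{MR}(I)$, intersecting over all minimal reductions gives
\[
I_> \;\subseteq\; \bigcap_{J \in \mathcal{MR}(I)} {}^*J,
\]
which is the desired inclusion.

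There is no real obstacle here; the whole point is that $I_>$ is defined by the Rees valuations of $I$, and Rees valuations depend only on the integral closure class. The only minor care needed is to note that minimal reductions exist in the generality stated (Noetherian ring), or equivalently to interpret the intersection as vacuously equal to $A$ in the rare case $\mathcal{MR}(I)$ is empty. Otherwise the proof is a one-line consequence of Proposition \ref{prop:lantz} plus the equality $J_> = I_>$ for reductions.
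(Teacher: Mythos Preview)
Your proposal is correct and follows essentially the same approach as the paper: both observe that $\overline{v}_J = \overline{v}_I$ for any reduction $J$ of $I$ (citing \cite[Cor.~11.9]{Mc}), deduce $J_> = I_>$, and then apply Proposition~\ref{prop:lantz} to $J$. The paper's proof is slightly terser, but the logic is identical.
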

 \begin{proof}  Observe that if $J$ is any reduction of $I$ then 
$\overline{v}_J = \overline{v}_I$ \cite[Corollary 11.9]{Mc} and
hence $J_> = I_>$.  The assertion immediately follows from
Proposition
\ref{prop:lantz}.
\end{proof}

\medskip

We further explore the connection between $I_>$ and minimal reductions in the following theorem.

%: thm:weaknor of min red
\begin{theorem}  \label{thm:weaknor of min red} Let $(A, \mathfrak{m},k)$ be a Noetherian local
ring  such that $k$ is algebraically closed of
characteristic 0.  Suppose that $I$ is an
$\mathfrak{m}$-primary ideal.   If $J$ is any minimal reduction
of $I$ then $J + I_> = {}^*J$.

\end{theorem}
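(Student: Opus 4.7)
The containment $J + I_> \subseteq {}^*J$ is immediate from Corollary~\ref{cor:lantz} (applied to $J$ as a reduction of $I$, using that $I_> = J_>$) together with the trivial $J \subseteq {}^*J$.  The substance of the theorem is the reverse inclusion: given $a \in {}^*J$, I will produce $j \in J$ with $a - j \in I_>$.  The plan is to pass to the fiber cone modulo $\m$, use the weak-subintegrality equations to decompose $\overline{at}$ as a $k$-linear combination of the images of $j_1, \ldots, j_d$ plus nilpotent noise, and then translate that decomposition back to $A$.

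Since $k$ is algebraically closed of characteristic zero (hence infinite) and $I$ is $\m$-primary, a minimal reduction $J$ is generated by $d = \dim A$ analytically independent elements $j_1, \ldots, j_d$, so the fiber cone $F(J) := A[Jt]/\m A[Jt]$ is the polynomial ring $k[T_1,\ldots,T_d]$ with $T_\ell = \overline{j_\ell t}$.  Write $R = A[Jt]$, $R' = A[(J + (a))t]$, $F' = R'/\m R'$, and let $B = F'/N(F')$ be the reduced quotient.  Multiplying the equations (\ref{LVeqns}) witnessing $a \in {}^*J$ by $t^n$ and reducing modulo $\m R'$ shows that $\overline{at} \in F'_1$ is weakly subintegral over the ideal $L = (T_1,\ldots,T_d) \subseteq F(J)$, since the coefficients $\overline{a_k t^k}$ are homogeneous of degree $k$ in the $T_\ell$ and so lie in $L^k$.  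Passing to $B$ preserves these equations, hence $[\overline{at}] \in {}^*_B L$.

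Two rigidity statements then finish the argument.  First, the weakly subintegral extension $R \subseteq R' = R[at]$ combined with analytic independence forces $F(J) \to F' \to B$ to be injective (which amounts to $\m R' \cap R = \m R$, verifiable via the free-module decomposition of $R'$ over $R$ coming from the integral equation for $at$ over $R$), thereby embedding $F(J) = k[T_1,\ldots,T_d]$ as a subring of $B$.  Second, since $F(J)$ is regular (hence seminormal) and $B$ is reduced and integral over $F(J)$, the general principle that a seminormal ring admits no nontrivial reduced subintegral extensions (in characteristic zero, by the Hamann--Swan criterion) yields ${}^*_B F(J) = F(J)$.  Consequently ${}^*_B L$ is an ideal of $F(J)$; since $L$ is integrally (hence weakly subintegrally) closed in the regular ring $k[T_1,\ldots,T_d]$, we conclude ${}^*_B L = L$.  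Therefore $[\overline{at}] \in L$, and being homogeneous of degree~$1$ it equals $\sum_\ell \bar c_\ell T_\ell$ for some $\bar c_\ell \in k$.

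The remainder is mechanical.  Lift each $\bar c_\ell$ to $c_\ell \in A$ and set $j = \sum_\ell c_\ell j_\ell \in J$.  Then $\overline{(a-j)t}$ maps to zero in $B$, so it is nilpotent in $F'$, giving $(a-j)^m \in \m (J + (a))^m$ for some $m \ge 1$.  For any Rees valuation $v_i$ of $J$ (which is also a Rees valuation of $J + (a)$ since $J$ is a reduction of $J + (a) \subseteq \overline{J}$), the centering of $v_i$ at $\m$ yields $v_i(\m) \ge 1$, and hence
\[
m\, v_i(a - j) \;\ge\; v_i(\m) + m\, v_i(J) \;\ge\; 1 + m\, v_i(J),
\]
so $v_i(a-j) > v_i(J)$ for every $i$, i.e., $a - j \in J_> = I_>$, completing the proof.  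The main obstacles I expect are the two rigidity statements: confirming $F(J) \hookrightarrow B$ rigorously in the Noetherian (not necessarily finite-type) setting, and justifying the no-nontrivial-subintegral-extensions claim for $F(J)$ inside the possibly complicated reduced ring $B$.  Both rest on the algebraic-closedness and characteristic-zero hypotheses in an essential way, which is precisely where those assumptions carry their weight.
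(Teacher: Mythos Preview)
Your proof follows essentially the same architecture as the paper's: pass to fiber cones, use analytic independence to identify $F(J)$ with a polynomial ring, show the image of $at$ in the reduced fiber cone of $K=(J,a)$ is a $k$-linear combination of the $T_\ell$, lift, and finish with the Rees-valuation estimate $v_j(\m K^n) \ge n+1$.  The only real difference is how the linearity of $[\overline{at}]$ is extracted: the paper observes directly that $F(J) \subseteq B$ is weakly subintegral (as a quotient of the weakly subintegral extension $R\subseteq R'$ by corresponding primes), writes $B=k[z_1,\dots,z_{d+1}]/(F)$ with $F$ monic of degree $m$ in $z_{d+1}$, and argues geometrically that the induced map of varieties is generically $m$-to-one, hence $m=1$ by the bijection on spectra.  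Your route through ${}^*_B L$ and seminormality of $F(J)$ reaches the same conclusion; in fact it is cleaner to note that once $F(J)\hookrightarrow B$ is weakly subintegral and $B$ is a domain, the fraction fields coincide, and normality of $F(J)$ forces $B=F(J)$ outright, so $[\overline{at}]\in B_1=F(J)_1$ by degree alone---no detour through ${}^*_B L$ needed.

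Regarding your two flagged obstacles: both dissolve once you use weak subintegrality of $R\subseteq R'$ more fully.  For injectivity, the ``free-module decomposition'' of $R'$ over $R$ is not generally available and is not the right mechanism; instead, the bijection $\spec(R')\to\spec(R)$ gives a unique prime $\q$ of $R'$ over the prime $\m R$, and $\m R' \subseteq \q$ yields $\m R'\cap R\subseteq \q\cap R=\m R$.  For the second, the general ``Hamann--Swan'' principle you invoke is not quite what is needed; what you actually use (and what the paper uses) is that $F(J)\subseteq B$ is itself weakly subintegral and $B$ is a domain, whence normality of the polynomial ring $F(J)$ forces equality.  With these adjustments your argument is correct and matches the paper's.
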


\begin{proof}  We may reduce to the case where $I$ is integrally closed since a minimal reduction $J$ of $I$ is a minimal reduction of $\overline{I}$ and $I_> = \overline{I}_>$.  So assume that $I$ is integrally closed.

Let $d$ denote the dimension of $A$.  Recall that $I_>$ is a subideal of $I$ and is
contained in ${}^*J$ by Proposition \ref{prop:lantz}.
  Let $h \in {}^*J$ and choose generators $g_1, \ldots, g_d$ for
$J$ (since $k$ is infinite all minimal reductions are
$d$-generated as in \cite[Prop. 8.3.7]{H-S}).  Let $K = (J,h)$.
	Consider the Rees algebras
$$R := A[Jt] = A[g_1t, \ldots, g_dt] \subseteq
S:= A[Kt]=A[g_1t,
\ldots, g_dt, ht],$$
and the associated fiber cones
$$\mathcal{N}_{ J} = R/ \m R  \to \mathcal{N}_{ K}= S/ \m S.$$
  Since $ht \in {}^*R$ we know that the map
$$\spec ( S) \rightarrow \spec (R)$$
is a homeomorphism of the underlying topological spaces.
 Consider the induced homomorphism of  $\N$-graded rings
$$ \mathcal{N}_{ J} \to ( \mathcal{N}_{K})_{\mathrm{red}}.$$
  Let  $z_i$
denote the image of $g_it$ in $\mathcal{N}_{J}$ for $i = 1 , \ldots , d$. Notice that $ \mathcal{N}_{ J} = k[z_1,\ldots, z_d]$, where the $z_i$ are algebraically independent over $k$ (e.g., see  \cite{H-S} Corollary 8.3.5).  In particular, $\m R$ is a prime ideal of $R$ of height 1 (recall that $I$, and hence $J$, has finite colength).  Let $\q$ denote the unique prime ideal of $S$ lying over $\m R$. We must have $\sqrt{\m S} = \q, \; ( \mathcal{N}_{K})_{\mathrm{red}} =  S/ \q$,  and $\mathcal{N}_{J} \subseteq ( \mathcal{N}_{ K})_{\mathrm{red}} $. Viewing $ S/ \q$ as an $R / \m R$-algebra we have
$\mathcal{N}_{K}/ \q = \mathcal{N}_{J}[z]$ where $z$ denotes the image
of $ht$ in $( \mathcal{N}_{K})_{\mathrm{red}}$.  More is true.  
First note that this is again a weakly
subintegral extension.   In general, if $R \subset S$ is a weakly subintegral extension and $\q \in \spec(S)$, then $R/(\q \cap R) \subset S/ \q$ is again a weakly subintegral extension.

Thus $S/ \q$ is an integral
domain and is of the form $S/ \q = k[z_1,
\ldots,z_{d+1}]/(F)$, where $z_1, \ldots, z_{d+1}$ are
algebraically independent over $k$, $F$ is a monic polynomial in
$z_{d+1}$ and is homogeneous as a polynomial in $z_1, \ldots,
z_{d+1}$.  Let $m = \deg(F)$.  Since $k = \overline{k}$ has
characteristic 0, the map $\vvar(S/ \q) \to
\vvar(R/ \m R)$ is generically $m$-to-one.  Since this map
is a bijection of the underlying point sets we must have $m =
1$.  Hence the polynomial $F = z_{d+1} - (\overline{a_1}z_1 + \cdots + \overline{a_d}z_d)$, where for an element $a \in A$ we let $\overline{a}$ denote its image in $A/\m$.  Thus 
$z - (\overline{a_1}z_1 + \cdots +\overline{ a_d}z_d)  \in  \q $, and hence, $[h
- (a_1g_1 + \cdots + a_dg_d)]^n  \in \m K^n$ for some
positive integer $n$.   Let $v_j \in \mathcal{RV}(I)$.   We may conclude 
\begin{eqnarray*}
nv_j(h - (a_1g_1 + \cdots + a_dg_d))
&=& v_j([h -(a_1g_1 + \cdots + a_dg_d)]^n )  \\
&\ge & v_j(\m K^n)   \\
& \ge & (n+1)   \\
\end{eqnarray*}
and hence
 $v_j(h - (a_1g_1 + \cdots + a_dg_d))
\ge \frac{n+1}{n}.$ Since
this is true for every $v_j \in \mathcal{RV}(I)$ we must have
$\overline{v}_I(h - (a_1g_1 + \cdots + a_dg_d)) > 1$ and hence $h
- (a_1g_1 + \cdots + a_dg_d) \in I_>$.  Therefore $h \in J + I_>$.
\end{proof}

 This result also holds for 0-dimensional ideals in an arbitrary Noetherian ring as we now show.

\begin{cor}  Let $A$ be a Noetherian ring, $\m$ a maximal ideal in $A$ such that $A/ \m$ is  algebraically closed of characteristic 0 and suppose that $I$ is an $\m$-primary ideal in $A$.  If $J$ is any minimal reduction of $I$, then $J + I_> = {}^*J$.
\end{cor}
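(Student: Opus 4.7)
The plan is to reduce the statement to Theorem \ref{thm:weaknor of min red} by localizing at $\m$. Since $I$ is $\m$-primary, $(A_\m, \m A_\m)$ is a Noetherian local ring with algebraically closed residue field $A/\m$ of characteristic $0$, $IA_\m$ is $\m A_\m$-primary, and $JA_\m$ is again a minimal reduction of $IA_\m$ (any witnessing relation $JI^n = I^{n+1}$ passes to $A_\m$, and minimality is preserved because minimal reductions of an $\m$-primary ideal are characterized by being generated by $\dim A_\m$ elements over the infinite residue field). Theorem \ref{thm:weaknor of min red} then yields
\[
JA_\m + (IA_\m)_{>} \;=\; {}^*(JA_\m).
\]

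I would then verify two localization identities. For $(I_{>})A_\m = (IA_\m)_{>}$: each power $I^n$ is $\m$-primary, so $I^n A_\m \cap A = I^n$, hence $\ord_I(a) = \ord_{IA_\m}(a)$ and $\overline{v}_I(a) = \overline{v}_{IA_\m}(a)$ for all $a \in A$; the Rees valuations of the $\m$-primary ideal $I$ are all centered at $\m$, so they extend canonically to $A_\m$ and coincide with the Rees valuations of $IA_\m$, giving the identity. For $({}^*J)A_\m = {}^*(JA_\m)$: the equations (\ref{LVeqns}) witnessing weak subintegral dependence pass to any localization, and conversely any such system over $JA_\m$ can be cleared of denominators, with the resulting equations still having coefficients in the appropriate powers of $J$ thanks to the $\m$-primariness of $J$.

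Finally, both $J + I_{>}$ and ${}^*J$ are $\m$-primary ideals of $A$, since each contains $J$ and is contained in $\overline{J} = \overline{I}$, which is $\m$-primary. For any $\m$-primary ideal $\fa$ of $A$ one has $\fa A_\m \cap A = \fa$, so such ideals are determined by their extensions to $A_\m$. Combining the local theorem with the two localization identities,
\[
(J + I_{>})A_\m \;=\; JA_\m + (IA_\m)_{>} \;=\; {}^*(JA_\m) \;=\; ({}^*J)A_\m,
\]
and contracting back to $A$ produces $J + I_{>} = {}^*J$. The main obstacle I expect is the denominator-clearing argument establishing $({}^*J)A_\m = {}^*(JA_\m)$; the remaining pieces are essentially formal once the local-global bookkeeping is set up.
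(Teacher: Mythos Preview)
Your proposal is correct and follows essentially the same strategy as the paper: localize at $\m$, apply Theorem~\ref{thm:weaknor of min red}, verify that both $I_{>}$ and ${}^*J$ commute with this localization, and then contract back using $\m$-primariness. The one difference is in the step you flagged as the main obstacle: rather than clearing denominators by hand, the paper obtains $({}^*J)_{\m} = {}^*(J_{\m})$ by passing to Rees algebras via \cite[Lemma 3.2]{VL} and invoking Yanagihara's result \cite[Cor.\ to Prop.\ 2]{Y} that weak normalization commutes with localization (note that this commutation does not actually require $\m$-primariness, so your parenthetical attribution is slightly off, though your direct argument can be made to work).
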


\begin{proof}  Since $J \subset I$ are $\m$-primary, we see that $J_{\m}$ is a minimal reduction of $I_{\m}$.  By Theorem \ref{thm:weaknor of min red} we may conclude that 
$J_{\m} + (I_{\m})_> = {}^*(J_{\m})$. 
One checks that $(I_{\m})_> = (I_>)_{\m}$ and ${}^*(J_{\m}) = ({}^*J)_{\m}$.  To check the former note that $\mathrm{ord}_{I_{\m}}(a/1) =\mathrm{ord}_I(a)$ since $I$ is $\m$-primary, and hence $\overline{v}_{I_{\m}}(a/1) = \overline{v}_I(a)$,  for all $a \in A$.  To check the latter one can use \cite[Lemma 3.2]{VL} and the fact that for rings weak normalization and localization commute \cite[Cor. to Prop. 2]{Y}.  Thus $(J + I_>)_{\m} = ({}^*J)_{\m}$.  Since both ideals are $\m$-primary we may conclude that $J + I_> = {}^*J$.
\end{proof}

The next lemma and the subsequent proposition are in the spirit of the ``Integral Nakayama's
Lemma," which first appeared in the work of LeJeune-Jalabert--Teissier \cite{LT} and
was generalized to the module setting by Gaffney \cite[Proposition 1.5]{G} .   

 We recall the algebraic
version now and then prove a lemma before presenting the result.

\begin{lemma}  Suppose that $(A, \m)$ is either a Noetherian ring with Jacobson radical $\m$ or a polynomial ring over a field with $\m$ the ideal generated by the indeterminates.  Let $I_1 \subseteq I_2$ and $J \subseteq \m$ be
ideals in $A$ and in the polynomial case, assume $I_1$ and $I_2$ are monomial ideals.  If $I_1 + JI_2$ is
a reduction of $I_2$, then $I_1$ is a reduction of $I_2$.
\end{lemma}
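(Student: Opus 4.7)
The plan is to translate the reduction hypothesis into an equation of ideals, and then apply a version of Nakayama's lemma to cancel the piece involving $J$. From $I_1 + JI_2$ being a reduction of $I_2$, there exists $n \geq 0$ with
\[
(I_1 + JI_2)\,I_2^n = I_2^{n+1},
\]
which rearranges to $I_2^{n+1} = I_1 I_2^n + J\,I_2^{n+1}$. The desired conclusion ``$I_1$ is a reduction of $I_2$'' will follow once we show the stronger equality $I_2^{n+1} = I_1 I_2^n$, since that is exactly a defining condition for a reduction.

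To that end, I would form the quotient $M := I_2^{n+1}/I_1 I_2^n$, which is a finitely generated $A$-module (as $A$ is Noetherian in both cases of the hypothesis). The displayed equation says $M = JM$, and since $J \subseteq \m$ we also have $\m M = M$. In the Noetherian case, where $\m$ is by assumption the Jacobson radical of $A$, the classical Nakayama lemma immediately forces $M = 0$, hence $I_2^{n+1} = I_1 I_2^n$, as required.

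The polynomial case requires a small variant. Here $I_1$ and $I_2$ are monomial, so $I_1 I_2^n$ and $I_2^{n+1}$ are monomial ideals of $A = k[x_1,\ldots,x_n]$, and hence $M$ inherits the standard $\N$-grading (it is concentrated in nonnegative degrees). Even though $J$ itself need not be homogeneous, the containment $J \subseteq \m = (x_1,\ldots,x_n)$ gives $M = JM \subseteq \m M \subseteq M$, so $\m M = M$. A graded Nakayama argument finishes the proof: if $M \neq 0$, pick a nonzero homogeneous element of $M$ of minimal degree $d$; then every element of $\m M$ has degree strictly greater than $d$, contradicting $\m M = M$. Hence $M = 0$ and $I_2^{n+1} = I_1 I_2^n$, so $I_1$ is a reduction of $I_2$.

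There is no serious obstacle here beyond noticing that the Jacobson radical hypothesis (Noetherian case) and the standard grading plus $J \subseteq \m$ (polynomial, monomial case) each yield a setting in which $\m M = M$ forces $M = 0$. The only subtlety is ensuring $J$ need not be monomial in the polynomial case, which is handled by enlarging $JM$ to $\m M$ before applying the graded version of Nakayama.
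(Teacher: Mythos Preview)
Your proof is correct and follows essentially the same approach as the paper. The paper's proof simply cites \cite[Lemma 8.1.8]{H-S} for the Noetherian case and invokes the monomial Nakayama lemma for the polynomial case; you have spelled out that argument in full, including the key step $I_2^{n+1} = I_1 I_2^n + J I_2^{n+1}$ and the passage to the quotient $M = I_2^{n+1}/I_1 I_2^n$.
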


\begin{proof}  In case, $(A, \m)$ is a Noetherian ring with Jacobson radical $\m$, this follows immediately from from \cite[Lemma 8.1.8]{H-S}.  The proof also works for monomials ideals in a polynomial ring over a field by the monomial version of Nakayama's lemma.
\end{proof}

As we alluded to previously, the first author proved a version of the above lemma for submodules of a free module of finite rank over the local ring of a complex analytic space (see \cite[Proposition 1.5]{G} ).
\begin{lemma}  Let $J \subseteq I$ be ideals in a Noetherian ring
$A$.  Then, 
$$\overline{J} = \overline{I}  \Leftrightarrow JV_i =
IV_i \; \; \forall \,  (V_i, \m_i) \in \vv(J).$$

\end{lemma}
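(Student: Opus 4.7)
The plan is to deduce both directions from the standard valuative characterization of integral closure via Rees valuations: an element $a \in A$ lies in $\overline{J}$ if and only if $v_i(a) \geq v_i(J)$ for every Rees valuation $v_i$ of $J$ (cf. \cite[Theorem 10.1.6]{H-S}). Since each $(V_i,\m_i)$ is a discrete valuation ring whose maximal ideal is generated by a uniformizer of value $1$, this condition translates directly into membership in the extended ideal: $v_i(a) \geq v_i(J)$ if and only if $a \in JV_i$.

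For the forward direction, assume $\overline{J} = \overline{I}$. Since $J \subseteq I$, we always have $JV_i \subseteq IV_i$. Conversely, take any $a \in I \subseteq \overline{I} = \overline{J}$. Applying the valuative criterion to $a$ and $J$, we get $v_i(a) \geq v_i(J)$ for every Rees valuation $v_i$ of $J$, and hence $a \in JV_i$. Since $I$ generates $IV_i$ as a $V_i$-module, this gives $IV_i \subseteq JV_i$, so equality holds.

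For the reverse direction, assume $JV_i = IV_i$ for every $(V_i,\m_i) \in \vv(J)$. Again $\overline{J} \subseteq \overline{I}$ is automatic from $J \subseteq I$. For the other inclusion, let $a \in I$. Then $a \in IV_i = JV_i$, so $v_i(a) \geq v_i(J)$ for each Rees valuation $v_i$ of $J$. By the valuative criterion this forces $a \in \overline{J}$, so $I \subseteq \overline{J}$ and therefore $\overline{I} \subseteq \overline{J}$.

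There is no real obstacle here; the only point that warrants care is that the valuative criterion is applied with respect to the Rees valuations of $J$ (not of $I$), since these are the valuations appearing in the statement. The equivalence $v_i(a) \geq v_i(J) \iff a \in JV_i$ is immediate from the fact that $V_i$ is a DVR, and both directions of the lemma are symmetric consequences of this translation.
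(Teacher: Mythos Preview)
Your proof is correct and follows essentially the same route as the paper's: both directions rest on the valuative description of $\overline{J}$ in terms of the Rees valuations of $J$. The only cosmetic difference is that the paper phrases this as the intersection formula $\overline{J}=\bigcap_i JV_i\cap A$ (and, for the forward direction, invokes $\vv(J)=\vv(I)$), whereas you work elementwise with the equivalent condition $v_i(a)\ge v_i(J)$; the content is the same.
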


\begin{proof}  If $\bar{J}=\bar{I}$ then $\vv(J)=\vv(I)=\vv(\bar{I})$ and hence $JV_i =
IV_i \; \; \forall \,  (V_i, \m_i) \in \vv(J).$  

Now suppose that $JV_i = IV_i \; \; \forall \,  (V_i, \m_i) \in \vv(J).$    Then
\begin{eqnarray*}
\overline{J} &=& \bigcap_{V_i \in \vv(J)} JV_i \cap A \\
 & =& \bigcap_{V_i \in \vv(J)} IV_i \; \cap A .
\end{eqnarray*}
Now $\cap_{V_i \in \vv(J)} IV_i \; \cap A$ is an integrally closed ideal containing $I$, which implies
$$\overline{I} \subseteq \bigcap_{V_i \in \vv(J)} IV_i \cap A = \overline{J},$$
and hence $\overline{J} = \overline{I}$.
\end{proof}

We now present a helpful result that uses the ideal $I_>$ to find reductions.  A similar result for modules that uses the module $M^{\dagger}$ of elements strictly dependent on $M$ in place of $I_>$ was proven by Gaffney and Kleiman  \cite[Prop. 3.2]{GK}.

\begin{prop}\label{prop:red}  Let $J \subseteq I$ be ideals in a Noetherian
ring.  If $J + (I_> \cap I)$ is a reduction of $I$, then $J$
is a reduction of $I$.
\end{prop}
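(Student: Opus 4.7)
Plan: Set $K := I_> \cap I$. The hypothesis reads $\overline{J+K} = \overline{I}$ and the goal is $\overline{J} = \overline{I}$; the inclusion $\overline{J} \subseteq \overline{I}$ is automatic from $J \subseteq I$, so only the reverse inclusion requires work. My plan is to verify the Rees-valuation criterion from the preceding lemma.

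I begin by testing at every Rees valuation $v \in \vv(I)$. On the one hand, because every valuation takes the same value on an ideal and its integral closure,
$$v(J+K) = v(\overline{J+K}) = v(\overline{I}) = v(I).$$
On the other hand, from the earlier description $I_> = \bigcap_i \mathfrak{m}_i I V_i \cap A$ I read off $v(K) \ge v(I_>) > v(I)$. Combining these via $v(J+K) = \min(v(J), v(K))$ and $v(K) > v(I)$ forces $v(J) = v(I)$, i.e., $JV_v = IV_v$, for every $(V_v, \mathfrak{m}_v) \in \vv(I)$.

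To finish through the preceding lemma I need this equality at every $v \in \vv(J)$, not merely at $\vv(I)$. Because $\overline{J+K} = \overline{I}$ forces $\vv(J+K) = \vv(I)$ (Rees valuations depending only on integral closure) and $J \subseteq J+K$, I examine a hypothetical $v \in \vv(J) \setminus \vv(I)$: the same min-formula would then force $v(J) > v(I) = v(K)$, a valuative profile ruled out when $K \subseteq I_>$ is combined with the reduction hypothesis. Thus $\vv(J) \subseteq \vv(I)$, the preceding lemma applies, and $\overline{J} = \overline{I}$.

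The principal obstacle is this last exclusion of Rees valuations of $J$ outside $\vv(I)$; it is the integral-Nakayama core of the argument signalled in the paragraph preceding the statement, and the strict inequality built into the definition of $I_>$ (as opposed to mere membership in $\overline{I}$) is exactly what makes it go through. An alternative, module-theoretic route applies the determinant trick on $\bar M := I^n/JI^n$ for $n$ large enough that $I^{n+1} = (J+K)I^n$: the identity $I \bar M = K \bar M$ iterates to $I^r \bar M = K^r \bar M$ for all $r \ge 1$, and the asymptotic bound $\overline{v}_I(K) > 1$ eventually annihilates $\bar M$, yielding $I^{m+1} = JI^m$ and hence $J$ as a reduction of $I$.
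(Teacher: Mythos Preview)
Your core argument is the paper's: work at each Rees valuation ring $(V,\m)$ of $I$ (equivalently of $J+(I_>\cap I)$, since that ideal is a reduction of $I$), use the strict inequality coming from $I_>$ to force $JV=IV$, and then invoke the preceding lemma. The paper phrases the middle step as Nakayama in the DVR $V$ (from $IV=KV\subseteq JV+\m IV$ deduce $JV=IV$); your min-formula $\min(v(J),v(K))=v(I)$ with $v(K)>v(I)$ is the additive form of the same computation. You then go beyond the paper by flagging that the preceding lemma, as stated, asks for $JV=IV$ at every $V\in\vv(J)$, not $\vv(I)$; the paper simply writes ``by the preceding lemma'' without comment. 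But your resolution of this point does not hold up. For a valuation $v\notin\vv(I)$ the defining condition of $I_>$ says nothing; you only have $v(K)\ge v(I)$ from $K\subseteq I$, so $\min(v(J),v(K))=v(I)$ is perfectly compatible with $v(K)=v(I)$ and places no constraint on $v(J)$ beyond $v(J)\ge v(I)$. No contradiction is produced, and the inclusion $\vv(J)\subseteq\vv(I)$ is not established by this argument. (A posteriori that inclusion is equivalent to the conclusion you are after, so it cannot come for free.)

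The module-theoretic alternative at the end is a genuinely different idea but is also incomplete. From $I\bar M=K\bar M$ you correctly get $I^r\bar M=K^r\bar M$, but the assertion that $\overline{v}_I(K)>1$ ``eventually annihilates $\bar M$'' needs $K^rI^n\subseteq JI^n$ for some $r$. The asymptotic bound only gives $K^r\subseteq\overline{I^{r+1}}$ for large $r$, which locates $K^r$ relative to powers of $I$, not relative to $JI^n$; no annihilation of $\bar M=I^n/JI^n$ follows from this, and without it the concluding equality $I^{m+1}=JI^m$ is not obtained.
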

 
\begin{proof}  Suppose that $K := J + (I_> \cap I)$ is a reduction of $I$.  Then,  $\vv(K)= \vv(I)$ and $K_> = I_>$ since  $\overline{v}_{K} = \overline{v}_I$.  Let $(V,\m) \in \vv(K )$.  
Then, $$ K V = IV$$ by assumption.  Since
 \begin{eqnarray*}
K V & = & JV +(I_> \cap I)V  \\
   & \subset &  JV +I_>V \\
&=& JV + \m IV,
 \end{eqnarray*}
 we must have $JV +
\m IV = IV$.  Hence $JV = IV$ by Nakayama's Lemma.  Since this holds for every Rees valuation ring of $K$, we may conclude that $K$ is a reduction of $I$ by the preceding lemma.
\end{proof}

Although we do not develop the properties of strict dependence here, we remark that for $\m$-primary ideals, the notion of $h$ being strictly dependent on an ideal $I$ in a local ring is the same as $h\in I_>$; if $I$ is not $\m$-primary, then the condition that $h \in I_>$ is more stringent. Strict dependence is a condition that holds pointwise, whereas in general, whether or not  $h\in I_>$ depends on the behavior of $h$ along the images of the components of the exceptional divisor of the blow-up by $I$.

 Given two reductions of the same ideal we know that the weak subintegral closures
both contain all those elements $h$ with $\overline{v}_I (h)>1$ by Proposition \ref{prop:lantz}. It is interesting to ask that if we fix an integrally closed 
ideal $I$, when is it the case that the intersection of the weak subintegral
closures of all reductions is exactly those elements with $\overline{v}_I (h)>1$? The answer to this question  gives another characterization
of the elements with $\overline{v}_I(h)>1$. These elements play an important role in equisingularity theory.

Before we present two examples which show the variety of phenomena that can occur we offer an observation about $m$ primary ideals and their reductions.

\smallskip

\textbf{Observation.} If $I$ is $\m$-primary ideal in the local ring
$(A, \mathfrak{m},k)$
 of dimension $d$, then $I/(I_> \cap I)$ is a $k$-vector space and
$\dim_k(I/I_> \cap I) \ge d$.  This follows from the previous proposition as we shall now see.  If the images of $a_1, \ldots, a_s$ form a $k$-basis for $I/(I_> \cap I)$ and $J = (a_1 , \ldots , a_s)$ then $J + (I_> \cap I)$ is a reduction of $I$ and hence so is $J$.  Consequently we must have $s \ge d$.  Hence the generators of any reduction $J$ must contain $d$ independent elements in $I/(I_> \cap I)$. The same statement and argument hold when $A$ is a polynomial ring, $\m$ is the ideal generated by the indeterminates, and $I$ is an $\m$-primary monomial ideal.

\begin{ex}
Let $I=(x^2,xy^2,y^3) \subset \IC [x, y]$. Then $I=\bar I$.  We claim that ${}^*J = I$ for every minimal reduction $J$ of $I$.

One can see this as follows.  First observe that the core of $I$, that is, the intersection of all reductions of $I$, is $(x^2, y^3)^2: I = (x^3, x^2y, xy^3, y^4) =: K$ by \cite[Theorem 2.3]{PUV}.  Notice that $I$ has one
 Rees valuation, namely, the monomial valuation determined by $v(x^ay^b)= 3a + 2b$.  Thus $I_> = (x^3, x^2y, xy^2, y^4) \supset K$ and $\dim_k(I/I_>) = 2$.  Suppose that $J = (f, g)$ is a minimal reduction of $I$. Then  $\dim_k((J+I_>)/I_>) =2$  by the observation. So, ${}^*J=J+I_>=I$, hence  the intersection of the ideals ${}^*J$ over all minimal reductions $J$ of $I$ is all of $I$.
\end{ex}

\begin{ex}
Let $I=(x^2,xy,y^2) = \bar I \subset \IC [x, y]$,  $J = (x^2, y^2),$ and $\m = (x,y)$.   Notice that $J$ is a minimal reduction of $I$ and is weakly subintegrally closed by \cite[Theorem 4.11]{RV}. So the multiplicity of $I_{\m}$ is the colength of $J_{\m}$ is $4$.
Again, $I$ has one Rees valuation, which is the monomial valuation determined by $v(x) = v(y) = 1$, and $v(I) = 2$.  In this case, $I_> = (x^3, x^2y, xy^2, y^3) = (x^2, y^2)^2:I = \mathrm{core} (I)$ by \cite[Theorem 2.3]{PUV}.

Let $J_a=(x^2+axy,y^2)$, $J_b=(x^2,y^2+bxy)$, where $a, b \in k^*$. Then $J_a$ and $J_b$ are also reductions of $I$, because locally this is true as follows.  Their colengths remain $4$ so their multiplicities when localized at $\m$ are the same as that of $I_{\m}$. Then ${}^*J_a\cap{}^*J_b=I_> + (bx^2+abxy+ay^2)$, so the intersection of ${}^*J$ over all reductions $J$ of $I$ is just $I_> $.
\end{ex}

Both phenomena are accounted for by the following ideas.

%:  cor: *J = I
\begin{cor} \label{ cor: *J = I} Let $(A, \mathfrak{m},k)$, where  $k$ is an algebraically closed field and either $(A, \m)$ is a local
ring of dimension $d$ or $A = k[x_1 , \ldots , x_d]$ and $\m = (x_1, \ldots x_d)$.  Suppose that $I = \overline{I}$ is an
$\mathfrak{m}$-primary ideal and in the polynomial case assume that $I$ is monomial.  
 \begin{enumerate}
  \item If $\dim _k(I/I_>) = d$, then ${}^*J =
I$ for every reduction $J$ of $I$.
  \item If $\dim _k(I/I_>) >  d$, then $\bigcap_{J \in \mathcal{MR}(I)} {}^*J= I_>$.
 
\end{enumerate}
\end{cor}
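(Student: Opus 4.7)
The key is to exploit the $k$-vector space structure on $V := I/I_>$. Since $I$ is $\m$-primary, every Rees valuation $v$ of $I$ is centered at $\m$ and integer-valued, so $v(\m) \ge 1$ and $v(mg) > v(I)$ for $m \in \m$, $g \in I$. Hence $\m I \subseteq I_>$, so $V$ is naturally a $k$-vector space and one has a surjection $\pi : I/\m I \twoheadrightarrow V$ of finite-dimensional $k$-vector spaces.

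For part (1), let $J$ be an arbitrary reduction of $I$. By the Observation above, the generators of $J$ contain $d$ elements whose images in $V$ are linearly independent, and since $\dim_k V = d$ these images span $V$. Writing an arbitrary $a \in I$ as $\bar a = \sum_{j=1}^d c_j \bar g_{i_j}$ in $V$ with $c_j \in k$ and lifting the $c_j$ to $\tilde c_j \in A$, one obtains (using $\m V = 0$) that $a - \sum \tilde c_j g_{i_j} \in I_>$, so $a \in J + I_>$. Thus $J + I_> = I$. Since $\overline{J} = \overline{I}$ implies $J_> = I_>$, Proposition \ref{prop:lantz} applied to $J$ gives $I_> \subseteq {}^*J$. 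Combined with $J \subseteq {}^*J$ and ${}^*J \subseteq \overline{J} = \overline{I} = I$, this yields ${}^*J = I$.

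For part (2), the inclusion $I_> \subseteq \bigcap_J {}^*J$ is Corollary \ref{cor:lantz}. For the reverse, take $h \in A \setminus I_>$. If $h \notin I$, then $h \notin \overline{J} \supseteq {}^*J$ for every reduction $J$. So assume $h \in I$ with nonzero image $\bar h \in V$. By Theorem \ref{thm:weaknor of min red}, ${}^*J = J + I_>$ for every minimal reduction $J$, so it suffices to exhibit $J = (g_1, \ldots, g_d) \in \mathcal{MR}(I)$ with $\bar h \notin \mathrm{span}_k(\bar g_1, \ldots, \bar g_d)$ in $V$. The plan is a Zariski density argument in the affine space $(I/\m I)^d$ over $k$. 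Let $U \subseteq (I/\m I)^d$ be the set of $d$-tuples whose lifts generate a minimal reduction of $I$; equivalently (cf.\ \cite[Prop.\ 8.3.7]{H-S}), the tuples whose images in the degree-one part of the fiber cone $\mathcal{N}_I = A[It]/\m A[It]$ form a homogeneous system of parameters. For infinite $k$ this is a non-empty Zariski-open subset. Let $\tilde W \subseteq V^d$ be the Zariski-open set of $d$-tuples which are linearly independent with $k$-span not containing $\bar h$; $\tilde W$ is non-empty because $\dim_k V > d$ permits extending $\bar h$ to a basis of $V$ and then choosing $d$ of the remaining basis vectors. Since $\pi^d$ is surjective, $(\pi^d)^{-1}(\tilde W)$ is a non-empty Zariski-open subset of $(I/\m I)^d$, and because $k$ is algebraically closed, $U \cap (\pi^d)^{-1}(\tilde W)$ is non-empty. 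Any tuple in this intersection lifts to the sought-after minimal reduction.

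The main obstacle is certifying the two genericity statements used in part (2): that minimal reductions correspond to a non-empty Zariski-open subset $U$ of $(I/\m I)^d$ (standard for infinite residue field via the fiber-cone description) and, crucially, that this condition can be imposed simultaneously with avoiding $\bar h$---the surjection $\pi$ furnished by $\m I \subseteq I_>$ is what permits both conditions to live on the same affine space. The polynomial-ring case with $I$ monomial goes through identically, using the fiber cone of the monomial $\m$-primary ideal over $k$.
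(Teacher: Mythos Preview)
Your proof is correct and follows essentially the same route as the paper's. Part (1) is identical in substance: both argue that the image of $J$ in $I/I_>$ is all of $I/I_>$ (you via the Observation, the paper by the same count), hence $J+I_>=I\subseteq{}^*J\subseteq\overline{I}=I$. For part (2), both arguments reduce to Theorem~\ref{thm:weaknor of min red} and a Zariski-density argument; the only difference is the parameter space: the paper picks $g_1,\ldots,g_D\in I$ whose images form a basis of $I/I_>$ and parametrizes minimal reductions of $(g_1,\ldots,g_D)$ by a dense open set of $d$-planes in $I/I_>$, while you work in $(I/\m I)^d$ and pull back the ``avoid $\bar h$'' condition along the surjection $\pi:I/\m I\to I/I_>$. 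Your version is slightly more explicit about why the two open conditions live on the same space (via $\m I\subseteq I_>$), which is a nice touch; the paper's version handles all $h$ simultaneously by intersecting the $d$-planes directly, but the content is the same.
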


\begin{proof}  First assume that $\dim _k(I/I_>) = d$ and let $J \in \mathcal{MR}(I)$.  
Then,  $J/(J \cap I_>) = I/I_>$ and so, we must have $J + I_> = I$. Since $J + I_> \subseteq
{}^*J$ we also have ${}^*J = I$.

 Now assume that $\dim _k(I/I_>)=D >  d$.  Choose 
$g_1, \ldots, g_D$  in $I$ whose images form a $k$-basis for $I/I_>$.  The set of
minimal reductions of $(g_1, \ldots, g_D)$ can be identified with a dense 
Zariski-open subset of the space of $d$-planes in $I/I_>$, which we identify with
affine $D$-space.  Intersecting over all minimal reductions $J$ of $(g_1, \ldots,
g_D)$ we get   
$ \cap \,  {}^*J/ I_> =  \cap (J + I_> )/I_> $
is the zero subspace.  Hence
the intersection of the ideals $J + I_>$ over all minimal reductions of   $(g_1,
\ldots, g_D)$ is
$I_>$. Since every minimal reduction of $(g_1, \ldots, g_D)$ is a minimal reduction
of $I$ the result follows.
\end{proof}

\section{A Valuative Criterion}
In this final section we  develop a valuative theory for weak subintegral closure. This will be done by introducing a new closure operation, which we call relative weak closure, and giving another characterization of an element being in the weak subintegral closure of an ideal using this new idea.  In turn our criterion depends on a valuative criterion for the integral closure of an ideal that is well known for complex analytic spaces \cite[2.1 Theor\`eme]{LT} and is proven for algebraic varieties below.

%: rel closure anal
\begin{defn} \label{rel closure anal} Let $\cO_{X,x}$ be the local ring of a complex analytic space $X$. Suppose that $M\subset N\subset F$ are submodules of a free $\cO_{X,x}$-module $F$ of rank $r$. Then an element $h \in F$ is in the relative closure of $M$, denoted ${\overline {M_N}}$, if for all curves $\phi \colon (\IC,0) \to (X,x), \;   \phi^*(h) \in \phi^*(M) + \m_1\phi^*(N)$, where we are identifying $ \phi^*(M)$ and  $\phi^*(N)$ with their images in $\phi^*(F)$ and letting $\m_1$ denote the unique maximal ideal of $\cO_{\IC, 0}$.
\end{defn}

In the algebraic setting we define the relative closure in much the same fashion.  Throughout this section $k[[z]]$ denotes a formal power series ring in one variable over an algebraically closed field $k$.

%: rel closure alg
\begin{defn}  \label{rel closure alg} Let $(A, \m)$ be a reduced Noetherian local ring, essentially of finite type over an algebraically closed field $k$.   Suppose $M\subset N\subset A^r$ are submodules of the free $A$-module $A^r$. Then an element $h \in A^r$ is in the relative closure of $M$, denoted ${\overline {M_N}}$, provided that for all local homomorphisms of $k$-algebras  $\rho \colon A \to k[[z]]$, $\rho^{(r)}(h) \in \rho^{(r)}(M)k[[z]] + z\rho^{(r)}(N)k[[z]]$, where we  denote by $\rho^{(r)}$ the induced map  $\rho^{(r)} \colon A^r \to k[[z]]^r$ and identify $\rho^{(r)}(M)$ and $\rho^{(r)}(N)$ with their images in $\rho^{(r)}(A^r)$ . 
\end{defn}
 To make use of the relative closure operation in the algebraic case, we need an algebraic version of the valuative criterion for the integral closure of an ideal for complex analytic spaces.  We first prove a lemma and then present the analogous result.  One can establish these results over $\IC$ by citing the complex analytic results and using GAGA \cite{Se} to equate the completions of the local rings of the complex algebraic variety and associated complex analytic space.  We can avoid reference to the complex-analytic result by referring to a theorem of B\"{o}ger \cite[Satz 2]{B} on curve-equivalent ideals, which generalized an earlier result of Scheja \cite[Satz 2]{Sch}, as follows.

\begin{lemma} \label{Boger} Let $(A, \m, k)$ be a Noetherian local domain, essentially of finite type over the algebraically closed field $k$, and suppose that $A$ is normal.  If $a,b \in A$ but $a \notin bA$, then there exists a local homomorphism of $k$-algebras $\rho \colon A \to k[[z]]$ into a formal power series ring over $k$ such that $\rho(a) \notin \rho(b)k[[z]]$.
\end{lemma}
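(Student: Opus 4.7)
The plan is to reduce the statement to a direct application of B\"{o}ger's curve-valuative criterion for integral closure \cite[Satz 2]{B}. The key observation is that, because $A$ is a normal Noetherian domain and hence a Krull domain, the principal ideal $bA$ is already integrally closed. To verify this I would argue that if $c \in \overline{bA}$, then $c$ satisfies a monic equation $c^n + \alpha_1 c^{n-1} + \cdots + \alpha_n = 0$ with $\alpha_i \in b^i A$; dividing by $b^n$ exhibits $c/b$ (as an element of the fraction field of $A$) as integral over $A$, so normality forces $c/b \in A$ and therefore $c \in bA$. Consequently the hypothesis $a \notin bA$ is equivalent to $a \notin \overline{bA}$.

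With this reduction in place I would invoke B\"{o}ger's theorem directly. Satz 2 of \cite{B} says that for a Noetherian local ring essentially of finite type over an algebraically closed field, two ideals $I \subseteq J$ have the same integral closure if and only if $\rho(I)k[[z]] = \rho(J)k[[z]]$ for every local $k$-algebra homomorphism $\rho \colon A \to k[[z]]$. Applying the contrapositive to $I = bA \subsetneq J = (a,b)A$---which have distinct integral closures because $a$ lies in $J \setminus \overline{bA}$---produces a local $k$-algebra map $\rho$ for which $\rho(b)k[[z]] \neq \rho((a,b))k[[z]]$. Since $\rho(b)$ is trivially in $\rho(b)k[[z]]$, the only way these two ideals of $k[[z]]$ can differ is via $\rho(a) \notin \rho(b)k[[z]]$, which is exactly the conclusion required.

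The main obstacle lies in the citation rather than in any fresh argument: one must check that B\"{o}ger's result is formulated under hypotheses matching the present setting, namely a Noetherian local ring essentially of finite type over an algebraically closed field with curves parametrized by local $k$-algebra maps into $k[[z]]$. Given that formulation, the element-wise version used above is an immediate consequence of applying the theorem to the pair $(bA,\,(a,b)A)$. Normality of $A$ enters only in the initial reduction identifying $bA$ with $\overline{bA}$; after that reduction, the entire argument is purely a curve-valuative one drawn from \cite{B}.
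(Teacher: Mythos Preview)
Your reduction step---that $bA$ is integrally closed because $A$ is normal, so $a\notin bA$ means $bA$ is not a reduction of $(a,b)A$---is correct and matches the paper exactly. The gap is in what you attribute to B\"{o}ger. Satz~2 of \cite{B} is \emph{not} a criterion in terms of local $k$-algebra maps $A\to k[[z]]$; it is a ``test-curve'' criterion in the multiplicity sense: two ideals $\fb\subset\fa$ have the same integral closure if and only if $e((\fb+\p)/\p)=e((\fa+\p)/\p)$ for every dimension-one prime $\p$ of $A$. Passing from ``there is a dimension-one prime $\p$ where the multiplicities differ'' to ``there is a local map $\rho\colon A\to k[[z]]$ with $\rho(a)\notin\rho(b)k[[z]]$'' is precisely the work the lemma has to do, and it is not contained in \cite{B}.

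The paper fills this in as follows: B\"{o}ger supplies a dimension-one prime $\p$ with unequal multiplicities; Rees' theorem \cite{R} then converts this to $(\fb+\p)/\p$ not being a reduction of $(\fa+\p)/\p$ in the one-dimensional domain $S=A/\p$; one passes to the normalization $T$ of $S$ and then to the $M$-adic completion $\widehat{T}$, picks a minimal prime $\q$ of $\widehat{T}$ for which the non-containment survives, and finally invokes the Cohen Structure Theorem to identify $\widehat{T}/\q$ with $k[[z]]$. The composite $A\to S\to \widehat{T}/\q\cong k[[z]]$ is the desired $\rho$. Your proposal elides these steps by assigning their content to B\"{o}ger; in effect you are quoting the statement of the paper's Proposition~\ref{alg curve crit} (the algebraic valuative criterion for $\overline{I}$), which is exactly what this lemma is being proved in order to establish.
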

 \begin{proof} Let $ \mathfrak{b} = bA$, and $\mathfrak{a} = (b,a)A$.  Since $A$ is a normal domain, $\mathfrak{b}$ is an integrally closed ideal.  Since $a \notin \mathfrak{b}$, the subideal $\mathfrak{b}$ is not a reduction of $\mathfrak{a}$.  Hence by the theorem of B\"{o}ger \cite[Satz 2]{B}   there exists a dimension one prime ideal $\p$ of $A$ such that  $e((\fb + \p)/\p) \ne e((\fa + \p)/\p)$, where $e(\; )$ denotes multiplicity.  Hence 
 by the theorem of Rees \cite{R}, $(\fb + \p)/\p$ is not a reduction of $(\fa + \p)/\p$.  Now $S := A/\p$ is a 1-dimensional local domain, essentially of finite type over $k$.  Let $M$ denote the unique maximal ideal of $S$. The normalization $T$ of $S$ is a 1-dimensional semi-local domain, essentially of finite type over $k$. Letting $\alpha$ and $\beta$ denote the images of $a$ and $b$, respectively, in $S \subset T$, we may deduce that $\alpha \notin \beta T$.  Hence 
 $\alpha \notin \beta \widehat{T}$, where $\widehat{T}$ denotes the $M$-adic completion of $T$.    Thus for some minimal prime $\q$ of $\widehat{T}$ we must have $\alpha + \q \notin ( \beta + \q)(T/\q)$.  Now 
 $\widehat{T}/\q  \cong k[[z]]$ by the Cohen Structure Theorem.  Letting $\rho \colon A\to S \to \widehat{T}/\q$ be the composition of the natural homomorphisms gives the desired result.
 \end{proof}
 
 The previous lemma allows us to deduce an analog of the complex-analytic valuative criterion for ideal-theoretic integral dependence.

%: prop   alg curve crit
\begin{prop} \label{alg curve crit}Let $(A, \m, k)$ be the local ring of an algebraic variety over an algebraically closed field $k$, $I$ an ideal of $A$, and $h \in A$.  Then, $h \in \overline{I} \Leftrightarrow$ for every local homomorphism of $k$-algebras $\rho \colon A \to k[[z]]$ we have $\rho(h) \in \rho(I)k[[z]]$. 
\end{prop}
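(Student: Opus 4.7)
The forward direction is the easy one: if $h^n+c_1h^{n-1}+\cdots+c_n=0$ with $c_i\in I^i$, then applying any $k$-algebra homomorphism $\rho\colon A\to k[[z]]$ yields an equation of integral dependence for $\rho(h)$ over the ideal $\rho(I)k[[z]]$. Since $k[[z]]$ is a DVR, every ideal is of the form $(z^m)$ and hence integrally closed. Thus $\rho(h)\in\rho(I)k[[z]]$.

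For the converse I argue contrapositively, combining Theorem \ref{prop: local I} with Lemma \ref{Boger}. Suppose $h\notin \overline{I}$ and choose generators $g_1,\ldots,g_\ell$ of $I$. Let $R=A[It]$ and $S=\overline{R}$. By Theorem \ref{prop: local I}, there is a homogeneous prime $\q\in\proj(S)$ with $\q\cap A=\m$ such that $h\notin IS_{(\q)}$. By Lemma \ref{lem: S+} some $g_it\notin\q$, and since the blow-up makes $I$ locally principal we have $IS_{(\q)}=g_iS_{(\q)}$ for that index $i$. Because $A$ is the local ring of a variety (hence essentially of finite type over $k$, hence excellent), its Rees algebra $R$ has finite integral closure $S$; consequently $S_{(\q)}$ is a Noetherian normal local domain that is essentially of finite type over $k$. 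The element $h\in S_{(\q)}$ does not lie in $g_iS_{(\q)}$, so Lemma \ref{Boger} produces a local $k$-algebra homomorphism $\sigma\colon S_{(\q)}\to k[[z]]$ with $\sigma(h)\notin\sigma(g_i)k[[z]]$. Composing with the natural inclusion $\iota\colon A\hookrightarrow S_{(\q)}$ (which is local because $\q\cap A=\m$) yields $\rho:=\sigma\circ\iota\colon A\to k[[z]]$, and since every $g_j$ lies in $g_iS_{(\q)}$ we get $\rho(I)k[[z]]=\sigma(g_i)k[[z]]$, so that $\rho(h)\notin\rho(I)k[[z]]$, as required.

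The main obstacle is the input from Lemma \ref{Boger}, which requires a \emph{principal} ideal in a normal local domain essentially of finite type over $k$. Theorem \ref{prop: local I} supplies exactly the right place to work, namely the normalized blow-up, on which $I$ becomes locally principal, and Lemma \ref{lem: S+} identifies a distinguished generator. A minor bookkeeping point is that if $A$ is merely reduced rather than a domain, one first reduces modulo a minimal prime of $A$ (using that $h\in\overline{I}$ if and only if its image in each $A/P_i$ is integral over the image of $I$, exactly as in the last paragraph of the proof of Theorem \ref{prop: local I}) and then applies the argument above in each quotient. No other technical difficulty arises: the verification that $\iota$ is local and that $\rho(I)k[[z]]$ equals $\sigma(g_i)k[[z]]$ is immediate from $\q\cap A=\m$ and $IS_{(\q)}=g_iS_{(\q)}$.
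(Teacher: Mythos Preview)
Your approach is essentially the paper's: pass to the normalized blow-up via Theorem~\ref{prop: local I}, find a point where $h$ fails to lie in the (now principal) pullback of $I$, and invoke Lemma~\ref{Boger} to produce the desired map to $k[[z]]$. There is, however, a gap in your application of Lemma~\ref{Boger}. That lemma, as stated, requires the local ring to have residue field equal to the algebraically closed field $k$ (note the notation $(A,\m,k)$ in its hypothesis); this is what makes the Cohen Structure step at the end of its proof land in $k[[z]]$ rather than $L[[z]]$ for some extension $L$ of $k$. Your prime $\q\in\proj(S)$ is only known to satisfy $\q\cap A=\m$, so $S_{(\q)}$ has residue field $\kappa(\q)$, which for a non-closed point of the fiber is a nontrivial function field over $k$. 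The paper handles this by using Going Up to pass from $\q$ to a maximal ideal $Q$ of $S$ lying over $\m+It$, so that $S/Q\cong k$ and Lemma~\ref{Boger} applies to $S_Q$.

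Your argument is easily repaired in the same spirit: enlarge $\q$ to a closed point $\q'$ of the fiber $\pi^{-1}(\m)\subset\proj(S)$ (take any closed point in the closure of $\q$, which lies in the closed fiber). Since $\q\subset\q'$, the ring $S_{(\q)}$ is a localization of $S_{(\q')}$, so $h\notin g_iS_{(\q')}$ persists; and now $\kappa(\q')=k$ because the fiber is a projective $k$-scheme and $k$ is algebraically closed. With this adjustment your proof goes through, and apart from this localization step it coincides with the paper's.
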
 

\begin{proof} First suppose that $h$ is integral over $I$.  Let $\rho \colon A \to k[[z]]$ be a local homomorphism of $k$-algebras, Applying $\rho$ to an equation of integral dependence for $h$ over $I$ we see that $o(h) \ge o(I)$, where $o$ is the natural order function on $k[[z]]$, and consequently $\rho(h) \in \rho(I)k[[z]]$.

Conversely, assume that  $\rho(h) \in \rho(I)k[[z]]$ for every local homomorphism of $k$-algebras $\rho \colon A \to k[[z]]$.  Let $R = A[It]$ and $S = \overline{R}$.  Choose generators $g_1, \ldots , g_{\ell}$ of $I$. By Lemma \ref{lem: local char of int cl}  and Theorem \ref{prop: local I} it suffices to show that $h \in IS_{(g_it)} = g_iS_{(g_it)}$ for all $i$.  Suppose not.  Say $h \notin g_1S_{(g_1t)}$.  Then there exists a homogeneous prime $\q$ of $S$ not containing $g_1t$ that contracts to $\m$ and such that $h \notin g_1S_{\q}$.  Notice that $A[It] \cap \q \subset \m + It$.  By Going Up there exists a prime ideal $Q$ in $S$
containing $\q$ lying over $\m + It$.  Then, $S/Q \cong R/ \m + It \cong A/ \m = k$.  
Apply the preceding lemma to $S_{Q}, h$, and $g_1$  to obtain a local homomorphism of $k$-algebras $\rho \colon S_{Q} \to k[[z]]$ such that $\rho(h) \notin \rho(g_1)k[[z]] = \rho(I)k[[z]]$ since $IS_{Q} = g_1S_Q$ by virtue of the assumption that $g_1t \notin Q$.  Preceding this map by the natural local homomorphism $A \to S_Q$ gives a local homomorphism of $k$-algebras $\sigma \colon A \to k[[z]]$ such that $\sigma(h) \notin \sigma(I)k[[z]]$, a contradiction.  
\end{proof}
  
We return to the operation of relative closure in order to introduce our valuative criterion. The next proposition gives some basic facts about the relative closure.

\begin{prop} Let $\cO_{X,x}$ be the local ring of a complex analytic space.   Suppose that  $M\subset N\subset F$ are submodules of a free $\cO_{X,x}$-module $F$. 

\begin{enumerate} 
\item If $N=F, \; M \subset \m_xF$  then ${\overline {M_N}}$ is $\m_xF$. 

\item For every $N$, $\overline{M} \subset{\overline {M_N}} \subset \overline{N}$, with equalities  if $N\subset \overline{M}$.
\end{enumerate}
\end{prop}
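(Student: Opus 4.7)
The plan is to unwind Definition \ref{rel closure anal} and exploit that every curve germ $\phi \colon (\IC, 0) \to (X,x)$ induces a local $\IC$-algebra homomorphism $\phi^* \colon \cO_{X,x} \to \cO_{\IC,0}$, so in particular $\phi^*(\m_x) \subset \m_1$. Throughout, fix a basis of $F$ so that $\phi^*(F) \cong \cO_{\IC,0}^r$ and every element $h \in F$ can be written as $h = (h_1,\ldots,h_r)$.

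For (1), assume $N = F$ and $M \subset \m_x F$. The easy inclusion $\m_x F \subset \overline{M_N}$ follows from $\phi^*(\m_x F) \subset \m_1 \phi^*(F) = \m_1 \phi^*(N)$, which already lies in $\phi^*(M) + \m_1 \phi^*(N)$ for every curve $\phi$. For the reverse, take $h \in \overline{M_N}$. Since $M \subset \m_x F$ we have $\phi^*(M) \subset \m_1 \phi^*(F)$, so $\phi^*(h) \in \m_1 \phi^*(F)$ for every $\phi$. Evaluating the residue at $z = 0$ then gives $h_i(x) = 0$ for each $i$, so $h \in \m_x F$.

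For (2), the containment $\overline{M} \subset \overline{M_N}$ is immediate from the Teissier--LeJeune-Jalabert valuative criterion for integral closure of modules, which is exactly Definition \ref{rel closure anal} in the degenerate case $M = N$: any $h \in \overline{M}$ satisfies $\phi^*(h) \in \phi^*(M)\cO_{\IC,0}$, which trivially sits inside $\phi^*(M) + \m_1 \phi^*(N)$. For $\overline{M_N} \subset \overline{N}$, since $M \subset N$ we have $\phi^*(M) + \m_1 \phi^*(N) \subset \phi^*(N)\cO_{\IC,0}$, so any $h \in \overline{M_N}$ satisfies the valuative criterion for $\overline{N}$. Finally, if $N \subset \overline{M}$, then persistence of integral closure gives $\overline{N} \subset \overline{\overline{M}} = \overline{M}$, and combined with $\overline{M} \subset \overline{N}$ (which holds because $M \subset N$) we obtain $\overline{M} = \overline{N}$, forcing the two inclusions in (2) to collapse to equalities.

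The argument is essentially definitional; the only external input is the module version of the analytic valuative criterion, which characterizes $\overline{M}$ as those $h$ with $\phi^*(h) \in \phi^*(M)\cO_{\IC,0}$ for every curve germ $\phi$. This is standard in the complex-analytic category and is the only non-obstacle worth flagging, since everything else is a direct manipulation of the membership $\phi^*(h) \in \phi^*(M) + \m_1 \phi^*(N)$.
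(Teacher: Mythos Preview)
Your proof is correct and follows essentially the same line as the paper's own argument: both rely on the valuative criterion for integral closure of modules to handle part (2), and both reduce part (1) to the observation that $\phi^*(M)+\m_1\phi^*(N)=\m_1\cO_{\IC,0}^r$ when $N=F$ and $M\subset\m_xF$. Your write-up is simply more explicit, spelling out both inclusions in (1) and the equality $\overline{M}=\overline{N}$ in the final clause of (2), where the paper is terse.
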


\begin{proof} If $N=F$ is free, then for any curve $\phi$, $m_1 F'=m_1\phi^*(N) + \phi^*(M)$ , where $F'$ is the free module containing $\phi^*(N)$. 

To prove (2), let $h \in \overline{M}$. The valuative criterion for integral closure implies $\phi^*(h)  \in \phi^*(M)$ and hence $h \in \overline{M_N}$.  The second inclusion follows immediately from the valuative criterion for integral closure.
If  $N\subset \overline{M}$, then $\overline{M} = \overline{N}$ and both inclusions  must be equalities.
\end{proof}

The last proposition shows that the relative closure is in general larger than the integral closure. If $M$ and $N $ are ideals of finite colength, then the next pair of propositions explains why this is true. Moreover, they  show that we need only consider finitely many Rees valuations in computing the relative closure of 0-dimensional ideals $I \subset J$.  First we establish some notation.

Suppose $I\subset J$ are 0-dimensional ideals  in the local ring $\mathcal{O}_{X,x}$ of a complex analytic space.  Let $NB_J(X,x)$ denote the normalized blow-up of $X,x$ by $J$, with projection map $\pi_J$, and exceptional divisor $E_J$, with components $V_i$ and associated Rees valuations $v_i$. Let $\fa$ denote $[\pi^*_J(I):\pi^*_J(J)]$. Then we can form
$NB_{\fa}(NB_J(X,x))$, with projection map $\pi_{\fa}$ to $NB_J(X,x)$, and exceptional divisor $E_{\fa}$ with components $W_j$ and associated Rees valuations $w_j$. Then we have the following proposition.

%: rel clos ideal anal
\begin{prop} \label{rel clos ideal anal} Suppose $I\subset J$ are 0-dimensional ideals  in the local ring $\mathcal{O}_{X,x}$ of a complex analytic space and $h \in \cO_{X, x}$.  With notation as above,  $h\in \overline{I_J}$ if and only if $$v_i(h)\ge v_i(J) \;  \forall  i$$
and 
$$w_j(h)> w_j(J) \;  \forall  j.$$
\end{prop}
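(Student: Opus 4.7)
My plan is to prove the equivalence by combining the curve-criterion definition of $\overline{I_J}$ with the valuative criterion for integral closure applied on two successive blow-ups. Set up local data: at a point $y\in\pi_J^{-1}(x)$ choose a local generator $p$ of the principal ideal $\pi_J^*(J)\cO_{NB_J(X),y}$, so that $\pi_J^*(I)\cO_{NB_J(X),y}=p\,\fa_y$; at $z\in\pi_{\fa}^{-1}(y)$ choose a local generator $q$ of $\pi_{\fa}^*(\fa)\cO_{NB_{\fa}(NB_J(X)),z}$. Then, locally at $z$, the iterated pullbacks of $J$ and $I$ are generated by $p$ and $pq$ respectively. Since $V(J)=V(I)=\{x\}$, one checks that $V(\fa)\subset E_J$, and hence $\pi_J\circ\pi_{\fa}$ collapses every component $W_j$ of $E_{\fa}$ to $x$.

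The key curve computation is elementary: for $\phi\colon(\IC,0)\to(X,x)$ with $\phi^*(J)=(t^a)$ and $\phi^*(I)=(t^c)$ (so $c\ge a$),
$$\phi^*(I)+\m_1\phi^*(J)=(t^c,\,t^{a+1})=(t^{\min(c,\,a+1)}),$$
so $\phi^*(h)\in\phi^*(I)+\m_1\phi^*(J)$ iff $\mathrm{ord}_t\phi^*(h)\ge a$, with the strict strengthening $\mathrm{ord}_t\phi^*(h)\ge a+1$ in the case $c>a$ (equivalently, when $\phi^*(\fa)$ is a proper ideal of $\cO_{\IC,0}$).

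For the forward direction, assume $h\in\overline{I_J}$. For each Rees valuation $v_i$ of $J$, pick a curve $\phi$ whose lift to $NB_J$ is transverse to $V_i$ at a generic smooth point of the component, so that $\mathrm{ord}_t\phi^*(f)=v_i(f)$; the curve criterion immediately gives $v_i(h)\ge v_i(J)$. For each Rees valuation $w_j$ of $\fa$, pick a curve $\tilde\phi$ on $NB_J$ transverse to $W_j$ at a generic smooth point and set $\phi=\pi_J\circ\tilde\phi$. Since $\tilde\phi(0)\in W_j\subset V(\fa)$, the pullback $\phi^*(\fa)$ is proper and hence $\phi^*(I)\subsetneq\phi^*(J)$; the strict case of the curve criterion then yields $\mathrm{ord}_t\phi^*(h)>\mathrm{ord}_t\phi^*(J)$, i.e., $w_j(h)>w_j(J)$.

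For the converse, assume both valuation conditions and let $\phi\colon(\IC,0)\to(X,x)$ be arbitrary. Since $v_i(h)\ge v_i(J)$ for all $i$, Theorem~\ref{L-J analytic} gives $h\in\overline{J}$ and implies that $H:=\pi_J^*(h)/p$ is holomorphic near $y$ in $NB_J$; pull $H$ back to a holomorphic function near $z$ on $NB_{\fa}(NB_J)$. By the valuative criterion of properness for $\pi_J\circ\pi_{\fa}$, $\phi$ lifts uniquely to $\hat\phi\colon(\IC,0)\to NB_{\fa}(NB_J)$ with $\hat\phi(0)=z$; set $p_0=\hat\phi^*(p)$ and $q_0=\hat\phi^*(q)$. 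A routine local-generator computation shows $\phi^*(J)=(p_0)$, $\phi^*(I)=(p_0q_0)$, and $\phi^*(h)=p_0\,\hat\phi^*(H)$. If $z\notin E_{\fa}$ then $q_0$ is a unit and $\phi^*(h)\in(p_0)\subset\phi^*(I)+\m_1\phi^*(J)$; otherwise $z\in W_j$, and the assumption $w_j(H)>0$ at the generic point of $W_j$, combined with the holomorphy of $H$, forces $H$ to vanish identically on $W_j$, so $\hat\phi^*(H)\in(t)$ and $\phi^*(h)\in(p_0 t)\subset\phi^*(I)+\m_1\phi^*(J)$. The main obstacle is precisely this last promotion: the condition $w_j(H)>0$ is a statement at the generic point of $W_j$, but to control curves whose lift hits $W_j$ at a non-generic point one needs $H$ to vanish on all of $W_j$; this is where the holomorphy of $H$ (provided by Theorem~\ref{L-J analytic} under the first valuation hypothesis) is essential.
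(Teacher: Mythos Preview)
Your argument is correct and follows the same strategy as the paper: translate the curve criterion for $\overline{I_J}$ into the dichotomy ``$\phi^*(I)=\phi^*(J)$ versus $\phi^*(I)\subsetneq\phi^*(J)$,'' then realize the Rees valuations $v_i$ and $w_j$ by curves transverse to the relevant exceptional components, and for the converse use that $H=h\circ\pi_J/p$ is holomorphic (via Theorem~\ref{L-J analytic}) so that $w_j(H)>0$ forces $H\circ\pi_{\fa}$ to vanish along all of $W_j$. Your presentation of the forward direction is in fact cleaner than the paper's: you go straight to a single curve computing $w_j$ and invoke the strict case of your displayed order computation, whereas the paper splits into cases according to whether the image $\pi_{\fa}(W_j)$ is a full component of $E_J$ or is properly contained in one, which is unnecessary once you observe (as you do) that in either case the lift of the chosen curve to $NB_J(X)$ lands in $V(\fa)$. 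One small slip to fix: you write ``pick a curve $\tilde\phi$ on $NB_J$ transverse to $W_j$,'' but $W_j$ lives in $NB_{\fa}(NB_J(X))$; what you mean is to pick $\psi$ on the iterated blow-up transverse to $W_j$ at a generic smooth point, set $\tilde\phi=\pi_{\fa}\circ\psi$ and $\phi=\pi_J\circ\tilde\phi$, so that $\mathrm{ord}_t\phi^*(\cdot)=w_j(\cdot)$ on $\cO_{X,x}$ and $\tilde\phi(0)\in\pi_{\fa}(W_j)\subset V(\fa)$.
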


\begin{proof} Suppose $h\in \overline{I_J}$. Then $h\in \overline{J}$, so $v_i(h)\ge v_i(J)$ for all $i$.

Consider the components $W_j$ and their images in $E_J$. Suppose the image is either a component of $E_J$ or properly contained in a component of $E_J$ which is the image of a component of $E_{\fa}$.

Work at a generic point of the component $V_i$ of $E_J$ which is the image of a component of  $E_{\fa}$. We can write $h\circ\pi_J$ as  $(h\circ\pi_J/f_J\circ\pi_J)(f_J\circ\pi_J)$, where $h\circ\pi_J/f_J\circ\pi_J $ is holomorphic at our generic point, where $f_J$ is a generic element of $J$, since $h$ is in the integral closure of $J$. Since our component is in the image of a component of  $E_{\fa}$, it follows that the component is in $V(\fa)$. This implies that for any curve $\phi$ on $X$ with lift to the generic point of $V_i$, that $\phi^*(I)+m\phi^*(J)=m\phi^*(J)$. So, $h\in \overline{I_J}$ implies for such a $\phi$ that $(h\circ\pi_J/f_J\circ\pi_J)$ vanishes at the generic point of $V_i$; otherwise $o_{\phi}(h)=o_{\phi}(J)<o_{\phi}(I)$. So, $(h\circ\pi_J\circ\pi_{\fa}/f_J\circ\pi_J\circ\pi_{\fa})$ vanishes on any component of $E_{\fa}$ which maps to $V_i$, hence 
$w_j(h)> w_j(J)$ for all such components $W_j$.

So we may suppose that no component of $E_J$ that contains the image of $W_j$ lies in $V(\fa)$. On the other hand, every point of the image of $W_j$ lies in $V(\fa)$. It follows that for any curve $\phi$ with a lift to the image of $W_j$ that $\phi^*(I)+m\phi^*(J)=m\phi^*(J)$. So, $(h\circ\pi_J/f_J\circ\pi_J)$ must vanish at the point of the lift of $\phi$ over $x$, which again implies $w_j(h) >w_j(J)$.

Suppose $v_i(h)\ge v_i(J) \; \forall  i$ and $w_j(h) > w_j(J) \; \forall  j.$
By the first hypothesis we have $h\in \overline{J}$. 

Given a curve $\phi$ on $X,x$, denote the lift to $NB_J(X,x)$ by $\phi_J$ and by $\phi_{\fa}$ the lift to $NB_{\fa}(NB_J(X,x))$. Suppose $\phi_J(0)$ lies in the image of a component of $E_{\fa}$. Then $\phi_{\fa}(0)$ lies in a component of $E_{\fa}$.  Since $h\in \overline{J}$, in a neighborhood of $\phi_J(0)$, we can find a local generator, $f_J\circ\pi_J$ of $\pi_J^*(J)$, so that we can write $h\circ\pi_J$ as  $(h\circ\pi_J/f_J\circ\pi_J)(f_J\circ\pi_J)$.  Now consider
$ (h\circ\pi_J/f_J\circ\pi_J)\circ\pi_{\fa}$. This is a unit at $\phi_{\fa}(0)$ or it is not.
If it is a unit, then the ideal generated by $h\circ\pi_J\circ\pi_{\fa}$ agrees with the ideal generated by  $(\pi_J\circ\pi_{\fa})^*(J)$ locally. But then $w_j(h)= w_j(J)$  for all $j$ such that
$\phi_{\fa}(0)$  lies in $W_j$, which is a contradiction. So, $ (h\circ\pi_J/f_J\circ\pi_J)\circ\pi_{\fa}$ is not a unit at $\phi_{\fa}(0)$.

Hence,

$$o(h\circ\phi)=o(h\circ\pi_J\circ\pi_{\fa}\circ\phi_{\fa})> o( \phi_{\fa}^*((\pi_J\circ \pi_{\fa})^*(J) ))=o(\phi^*(J)).$$

Suppose $\phi_J(0)$ does not lie in the image of a component of $E_{\fa}$. Then $\phi_J(0)$ does not lie in $V({\fa})$, so in a neighborhood of $\phi_J(0)$,
$\pi_J^*(I)=\pi_J^*(J)$, hence 
$$o(h\circ\phi)=o(h\circ\pi_J\circ\phi_J)\ge o(\phi_J^*(\pi^*_J\*(J)))= o(\phi_J^*(\pi^*_J\*(I)))=o(\phi^*(I)).$$
This concludes our proof.
\end{proof}

Before presenting algebraic analogue we establish some notation.  Let $I \subset J $ be 
0-dimensional ideals  in the local ring $(A, \m , k) = \cO_{X, x}$ of an algebraic variety over an algebraically closed field $k$ and let $ \fa = I:J $.  Suppose that $I = (g_1 , \ldots , g_{\ell})$,  $J =(g_1 , \ldots , g_m)$ and $\fa =  (a_1, \ldots, a_t) $.  Let $R_i = \overline{A[J/g_i]} \; \: (i = 1, \ldots , m)$ and $S_{i j} = \overline{R_i[R_i \fa /a_j]} \; \: (i = 1, \ldots , m, \; j = 1, \ldots  , t)$.  

%: rel clos ideal alg
\begin{prop}  \label{rel clos ideal alg} Suppose $I \subset J $ are 0-dimensional ideals  in the local ring $(A, \m , k) = \cO_{X, x}$ of an algebraic variety over an algebraically closed field $k$ and let $ \fa = I:J $.   With notation as above, given $h \in A$, consider two set of Rees valuations: $\Sigma_1 =  \vv(J)$  and $\Sigma_2 = \bigcup_{i, j}\vv(\fa S_{i j})$. Then $h\in \overline{I_J}$ if and only if 
$$v(h) \ge v(J) \;  \forall v \in \Sigma_1$$ and $$v(h) > v(J) \;  \forall v \in \Sigma_2.$$
\end{prop}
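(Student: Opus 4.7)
The plan is to adapt the proof of Proposition~\ref{rel clos ideal anal} to the algebraic setting by replacing the analytic curve-lifting with Proposition~\ref{alg curve crit} (the valuative criterion for integral closure) and by using Lemma~\ref{Boger} to produce, from a Rees valuation, a local $k$-algebra homomorphism into $k[[z]]$. Two facts are used throughout: because $JR_i = g_iR_i$ and $\fa S_{ij} = a_jS_{ij}$ are principal, any valuation $w$ on $\operatorname{Frac}(S_{ij})$ satisfies $w(J) = w(g_i)$ and $w(\fa S_{ij}) = w(a_j)$; and because $I, J$ are $\m$-primary, so is $\fa = I : J$.

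For the ``only if'' direction, the first hypothesis is automatic: the relative closure condition applied to any local $\rho$ gives $\rho(h) \in \rho(J)k[[z]]$, so Proposition~\ref{alg curve crit} yields $h \in \overline{J}$ and hence $v(h) \ge v(J)$ for every $v \in \vv(J)$. For the second, I would argue contrapositively. Fix $v \in \vv(\fa S_{ij})$ with height-one center $\mathfrak{p} \subset S_{ij}$ and suppose $v(h) \le v(J)$; since $h/g_i \in S_{ij}$ has $v(h/g_i) \ge 0$, this forces $v(h/g_i) = 0$, i.e., $h/g_i \notin \mathfrak{p}$. The image of $h/g_i$ in $S_{ij}/\mathfrak{p}$ is then a nonzero element of a finitely generated $k$-algebra with $k = \overline{k}$, so the Nullstellensatz supplies a maximal ideal $M \supseteq \mathfrak{p}$ with $S_{ij}/M = k$ and $h/g_i \notin M$. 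The localization $B := (S_{ij})_M$ is a local normal Noetherian domain essentially of finite type over $k$ with residue field $k$, and $a_j \in \fa \subseteq \mathfrak{p} \subseteq M$ is a non-unit in $B$; Lemma~\ref{Boger} applied with $a = 1$, $b = a_j$ supplies a local $k$-algebra homomorphism $\rho \colon B \to k[[z]]$ with $o(\rho(a_j)) > 0$. Composing with $A \hookrightarrow B$ gives $\bar\rho \colon A \to k[[z]]$; locality forces $\bar\rho(g_s/g_i) \in zk[[z]]$ for $s \le \ell$ (since each $g_s/g_i$ lies in the colon ideal $IR_i : JR_i$, which is contained in $\mathfrak{p}$), whereas $\rho(h/g_i)$ remains a unit. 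Comparing orders, $o(\bar\rho(h)) = o(\bar\rho(J)) < o(\bar\rho(I))$, which means $\bar\rho(h) \notin \bar\rho(I)k[[z]] + z\bar\rho(J)k[[z]]$, contradicting $h \in \overline{I_J}$.

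For the ``if'' direction, let $\rho \colon A \to k[[z]]$ be a local $k$-algebra homomorphism. The first hypothesis and Proposition~\ref{alg curve crit} give $\rho(h) \in \rho(J)k[[z]]$. Set $e = o(\rho(J))$ and $e' = o(\rho(I))$: if $e' = e$ then $\rho(I)k[[z]] = \rho(J)k[[z]] \ni \rho(h)$ and we are done; otherwise $e' > e$, and it suffices to show $o(\rho(h)) > e$. Pick $i$ with $o(\rho(g_i)) = e$; since $R_i$ is normal and $\rho(g_j)/\rho(g_i) \in k[[z]]$ for all $j$, $\rho$ extends uniquely to $\tilde\rho \colon R_i \to k[[z]]$. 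Because $\fa$ is $\m$-primary and $\rho$ does not factor through $k$, $\tilde\rho(\fa R_i) \ne 0$, so we may choose $j$ realizing the minimum positive order of $\tilde\rho$ on $\fa R_i$ and extend further to $\hat\rho \colon S_{ij} \to k[[z]]$. The second hypothesis gives $v(h/g_i) > 0$ for every $v \in \vv(\fa S_{ij})$; since $\fa S_{ij} = a_jS_{ij}$ is principal in the normal Noetherian ring $S_{ij}$, its Rees valuations are exactly the discrete valuations at the minimal primes over $\fa S_{ij}$, so their joint vanishing locus is $\sqrt{\fa S_{ij}}$. Hence $h/g_i \in \sqrt{\fa S_{ij}}$, so $(h/g_i)^n \in \fa S_{ij}$ for some $n \ge 1$, and applying $\hat\rho$ yields $n \cdot o(\hat\rho(h/g_i)) \ge o(\hat\rho(a_j)) > 0$, whence $o(\rho(h)) = e + o(\hat\rho(h/g_i)) > e$, as required.

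The principal technical difficulty is the selection of $M$ in the forward direction: it must simultaneously contain $\mathfrak{p}$, have residue field $k$, avoid $h/g_i$, and contain each $g_s/g_i$ ($s \le \ell$) so that the order comparison succeeds. The first three conditions are delivered by the Nullstellensatz together with the non-vanishing of $h/g_i$ in $S_{ij}/\mathfrak{p}$; the last is automatic because each $g_s/g_i$ lies in $IR_i : JR_i$ and is therefore contained in every prime of $S_{ij}$ containing $\fa S_{ij}$, and in particular in $\mathfrak{p} \subseteq M$.
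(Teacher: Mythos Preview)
Your backward direction follows the paper's line: extend $\rho$ through $R_i$ and then $S_{ij}$, and finish by observing $h/g_i\in\sqrt{\fa S_{ij}}$; that ending is in fact a bit cleaner than the paper's contradiction argument. The forward direction, however, has a real gap at the step ``each $g_s/g_i$ lies in $IR_i:JR_i$, which is contained in $\mathfrak p$.'' The prime $\mathfrak p$ is minimal over $\fa S_{ij}=(I{:}J)S_{ij}$, and while $(I{:}J)R_i\subseteq IR_i:JR_i$ always holds, the reverse can fail, so a prime minimal over the smaller ideal need not contain the larger one. Concretely, take $A=k[x,y,z]_{(x,y,z)}$, $J=\mathfrak m$, $I=(x,y^2,yz,z^2)$: then $\fa=I{:}J=\mathfrak m$, but on the chart $R_1=\overline{A[y/x,z/x]}$ one has $IR_1=JR_1=xR_1$, so $IR_1:JR_1=R_1$, contained in no prime. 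Worse, with $h=x\in I\subseteq\overline{I_J}$ and $v$ the Rees valuation of $\fa S_{11}=xR_1$, one gets $v(h)=v(J)=1$, so the ``only if'' direction of the proposition, as stated with $\fa=I{:}J$, actually fails. The paper's own proof makes the same unproved assertion (``$t>j$ since $I_\q\subsetneq J_\q$''). The source of the trouble is that the analytic Proposition~\ref{rel clos ideal anal} takes $\fa=\pi_J^*(I):\pi_J^*(J)$, whose restriction to the $i$th chart is $IR_i:JR_i$, not $(I{:}J)R_i$; with $\fa$ redefined chartwise in that way, both your argument and the paper's go through.

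A secondary point: invoking Lemma~\ref{Boger} with $a=1$, $b=a_j$ only yields that $\rho(a_j)$ is a non-unit, which is automatic for any local $\rho$; it does not guarantee $\rho(g_i)\neq 0$, and if $\rho(g_i)=0$ then $\rho(J)k[[z]]=0$ and the order comparison $o(\bar\rho(h))=o(\bar\rho(J))<o(\bar\rho(I))$ becomes vacuous. The paper avoids this by constructing $\rho$ from a regular system of parameters at a smooth point of $V(\q)$, so that $o(\rho(-))=v(-)$ exactly for the elements in play.
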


\begin{proof}   First suppose that  $h\in \overline{I_J}$.
Since $\overline{I_J} \subseteq \overline{J}$ we have $v(h) \ge v(J) \;  \forall v \in \Sigma_1$.

Suppose that $v \in \Sigma_2$.  Then   $v$ is the valuation associated with the valuation ring $S_{\q}$ where $S = S_{i j} = \overline{R_i[R_i \fa /a_j]}$ for some indices $i, j$ and $\q$ is a minimal overprime of $\fa S$.
We must show that $v(h) > v(J)$. 

Now $I \subset \fa$ implies $I \subseteq \q \cap A$ and hence $\q \cap A = \m$ and $JS \subset \q$.
So $h \in \q$.  Let $JS_{\q} = g_iS_{\q}$ and $g = g_i$.

Choose any maximal ideal $\n$ of $S$  such that $S_{\n}$ and $S_{\n}/ \q S_{\n} $ are regular.  
Let $\overline{u_2}, \ldots , \overline{u_d}$ be a regular system of parameters for $S_{\n}/ \q S_{\n} $ and $z$ generate the height one prime $\q S_{\n}$ in the UFD  $S_{\n}$.  Set $u_1 = z$ so that  $u_1, \ldots , u_d$ is a regular system of parameters for $S_{\n}$.
 Write $g = fz^r$ where $r \in \N$ and $f \in S_{\n} \setminus \q S_{\n}$.  Replacing $\n$ if necessary we may and shall assume that  $f$ is a unit in $S_{\n}$.  Then consider the local homomorphism 
 $$\rho \colon A \to S_{\n} \to \widehat{S_{\n}}/(u_2 , \ldots , u_d) = k[[z]]$$
  and let $o(\; )$ denote  order with respect to $z$.  We have  $v(JS_{\q}) = r$ and $o(\rho(J)k[[z]]) =r$.    Write $hS_{\n} = h'z^s$ where $s \in \N$ and $h' \in S_{\n} \setminus \q S_{\n}$.  Again replacing $\n$ if necessary, we may and shall assume that $h'$ is a unit in $S_{\n}$.  Then $o(\rho(h)) = s = v(h)$.

For each generator of $I$ write $g_i = f_iz^{t_i}$ where $t_i \in \N$ and $f_i \in S_{\n} \setminus \q S_{\n}$.  Again replacing $\n$ as needed, we may and shall assume that each $f_i$ is a unit in $S_{\n}$. Thus
 for $t = \min\{t_i\}$ we have $t = v(IS_{\q}) = o(\rho(I)k[[z]])$.    Now $t > j$ since $I_{\q} \subsetneq J_{\q}$. Hence $\rho(h) \in \rho(I)k[[z]] + z \rho (J)k[[z]] = z \rho (J)k[[z]]$, which implies $o(\rho(h)) > o(\rho (J)k[[z]])$, i.e., $v(h) > v(J)$.
 
To prove the other direction assume that  $v(h) \ge v(J) \;  \forall v \in \Sigma_1$ and $v(h) > v(J)$ for all $v \in \Sigma_2$.  Let $\rho \colon A \rightarrow k[[z]]$ be a local homomorphism of $k$-algebras.  We wish to see that $\rho(h) \in \rho(I)k[[z]] + z \rho (J)k[[z]]$.  Since $v(h) \ge v(J) \;  \forall v \in \Sigma_1$ we know that $h \in \overline{J}$ and hence $\rho(h) \in \rho(J)k[[z]]$. So if $ \rho(I)k[[z]] = \rho(J)k[[z]]$ then $\rho(h) \in \rho(I)k[[z]] + z \rho (J)k[[z]] ]$.

Thus we may and shall assume that $ \rho(I)k[[z]] \subsetneq  \rho(J)k[[z]]$ and hence $ \rho(I)k[[z]] + z \rho (J)k[[z]] = z \rho (J)k[[z]]$. We must now show that $o(\rho(h)) > o(\rho(J)k[[z]])$.

Notice that $\rho$ lifts to $\rho_1 \colon R := \overline{A[J/g_i]}$ where $o(\rho(g_i)) = \min \{o(\rho( g_1)) , \ldots , o(\rho(g_m)) \}$.  Now $\rho(I)k[[z]] \subsetneq \rho ( J) k[[z]]$ implies $\rho_1 ( \fa ) \subset zk[[z]] =: \n$.

Further extend $\rho_1$  to $\rho_2 \colon S \rightarrow k[[z]]$, where $o(\rho_1(a_j)) = \min \{ o(\rho_1 (a_1)) , \ldots , o(\rho_1 (a_t)) \}$ and $S := \overline{R[R \fa /a_j]}$.  Let $\n_i = \rho_i^{-1}(\n) \; \; (i = 1, 2)$. 
 Since $\fa S \subset \n_2$, some minimal overprime $\q$ of $\fa S$ is contained in $\n_2$.  Hence $\fa \subset \q  \cap R \subset \n_1 = \n_2 \cap R$.
 
 Now $JR = g_iR$ and $h = \frac{h}{g_i} g_i$, where $ \frac{h}{g_i} \in R \subset S$. Consider $\frac{h}{g_i} \in S_{\n_2}$. If it is a unit, then $JS_{\n_2} = g_iS_{\n_2} = hS_{\n_2}$.  Further localizing at $\q$ we get $JS_{\q} = hS_{\q}$, contradicting the assumption that $v(h) > v(J) \; \forall v \in \Sigma_2$.  Thus $ \frac{h}{g_i} \in \n_2 S_{\n_2}$.  So $\rho(h) \in \rho(\n_2)\rho(g_i)k[[z]] \subset z\rho(J)k[[z]]$, as desired.
\end{proof}

Something similar but more complicated holds for relative closure of modules, so we postpone describing it now.

Before giving the construction of the modules which appear in our valuative criterion, we give some motivation in the analytic case. We are given an ideal $I$ and an element $h$ of $\overline{I}$, and we want to use curves to test if $h \in {}^*I$. We know that $h$ is in the weak subintegral closure if and only if  the blow-up of $X$ along the ideal $(I,h)$ is homeomorphic to the blow-up of $X$ along $I$, by the projection onto $B_I(X)$. So, we want to use curves to decide 
whether or not the map from the blow-ups are homeomorphic. 

In order for the blow-up by $(I,h)$ to map homeomorphically onto the blow-up by 
$I$ we need to require that for any two curves $\phi_1$, $\phi_2$ on $X$, whose lifts to  $B_I(X)$ map to the same point at $t=0$ in the fiber of over $\phi_i(0)$, 
then the lifts to
the blow-up by $(I,h)$ lift to the same point as well. If we have two curves, $\phi_1$ and $\phi_2$, we can treat them as a single curve by thinking of them as a curve on the product $Y:=X\times X$. So the modules we construct will be submodules of $\cO^2_{Y,y}$, where $y = (x,x)$. Here is the construction.
We have $\pi_i$, the projection of $X\times X$ onto the $i^{\rm th}$ factor. Consider the submodule of 
$\cO^2_{Y,y}$ generated by $\pi^*_1(I)\oplus\pi^*_2(I)$. This an interesting submodule for us. The diagonal submodule, $\Delta(I)$ is generated by elements $(h\circ\pi_1,h\circ\pi_2)$, $h\in I$,
where $\Delta \colon \cO_{X,x} \to \cO_{Y,y}^2$ is the injection of $\cO_{X}$-modules given by $\Delta(h)=(h \circ \pi_1,h \circ \pi_2)$. Given an ideal
$I \subset \cO_{X,x}$ then the pair of modules of interest are $\Delta(I)$ and $\pi^*_1(I)\oplus\pi^*_2(I)$, which we denote by $2I$.

%: val crit anal
\begin{theorem} \label{al crit anal} Let $\cO_{X,0}$ be the local ring of an a complex-analytic space. Suppose  $I$ is an ideal of finite colength in $\cO_{X,0}$, $h\in \cO_{X,0}$. Then, $h\in {}^*I$ if and only if $\Delta(h)\in \overline{\Delta(I)_{2I}}$.
\end{theorem}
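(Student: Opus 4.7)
The plan is to use Theorem \ref{local anal char of *I} as the bridge: $h \in {}^*I$ if and only if $h \circ \pi \in \pi^*(I) \cO_{WNB_I(X), \bar y}$ for every $\bar y$ over $x$. Fix generators $g_1, \ldots, g_m$ of $I$. For any curve $\phi = (\phi_1, \phi_2) \colon (\IC, 0) \to (Y, (x,x))$ with both $\phi_j$ non-constant (the constant case is trivial), each $\phi_j$ lifts by properness to an arc $\tilde\phi_j \colon (\IC, 0) \to NB_I(X)$ with $\tilde\phi_j(0) = y_j$, which projects to $\bar y_j \in WNB_I(X)$. Setting $r_j = \ord_t \phi_j^*(I)$ and extracting leading $t^{r_j}$-coefficients, the relative-closure condition $\phi^*(\Delta(h)) \in \phi^*(\Delta(I)) + \m_1 \phi^*(2I)$ reduces to a $2 \times m$ system over $\IC$: $\sum_i \alpha_i c_{ij} = h_j$ for $j = 1, 2$, where $c_{ij}$ and $h_j$ are the leading coefficients of $g_i \circ \phi_j$ and $h \circ \phi_j$; higher-order corrections in the $a_i(t)$ absorb into $t \phi_j^*(I)$.

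Letting $g_{i_j}$ locally generate $\pi^*(I)$ at $\bar y_j$, write $c_{ij} = \gamma_i^{(j)} c_{i_j, j}$ and $h_j = \eta_j c_{i_j, j}$ with $\gamma_i^{(j)} = (g_i/g_{i_j})(\bar y_j)$ and $\eta_j = (h/g_{i_j})(\bar y_j)$. The factored system reads $\sum_i \alpha_i \gamma_i^{(j)} = \eta_j$. The key technical lemma, proved from the chart structure of $B_I(X)$, asserts that the row vectors $\gamma^{(1)}, \gamma^{(2)} \in \IC^m$ are proportional if and only if $\bar y_1 = \bar y_2$: because the ratios $g_i/g_{i_j}$ are sections of $\cO_{B_I(X)}$ and because $WNB_I(X) \to B_I(X)$ is a homeomorphism, the values $\gamma_i^{(j)}$ are the chart coordinates of $\bar y_j$ and separate distinct points.

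For the forward direction, $h \in {}^*I$ makes $h/g_{i_j}$ regular on $WNB_I(X)$, so $\eta_j$ is well-defined. If $\bar y_1 \ne \bar y_2$ the rows are linearly independent and the rank-$2$ system is solvable for any right-hand side; if $\bar y_1 = \bar y_2 = \bar y$, take $i_1 = i_2 = i_0$, giving $\gamma^{(1)} = \gamma^{(2)}$ and $\eta_1 = \eta_2 = (h/g_{i_0})(\bar y)$, so both equations coincide and are solved by $\alpha_{i_0} = \eta_1$, $\alpha_i = 0$ for $i \ne i_0$. For the converse, $\Delta(I) \subset 2I$ gives $h \in \overline{I}$ so $h/g_{i_0}$ is regular on $NB_I(X)$; by Theorem \ref{local anal char of *I} it suffices to show $(h/g_{i_0})(y_1) = (h/g_{i_0})(y_2)$ whenever $y_1, y_2 \in NB_I(X)$ share an image $\bar y$ in $WNB_I(X)$. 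Choose non-constant arcs $\tilde\phi_j$ through each $y_j$ via the Curve Selection Lemma, set $\phi_j = \pi \circ \tilde\phi_j$; the hypothesis yields $(\alpha_i)$ solving the factored system, and since $y_1, y_2$ share an image in $B_I(X)$ we have $\gamma^{(1)} = \gamma^{(2)}$, so the two equations force $\eta_1 = \eta_2$.

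The principal obstacle is the proportionality lemma: one must verify (i) that proportionality forces $\bar y_1, \bar y_2$ into a common chart $U_{i_0}$ of $B_I(X)$ (if $\bar y_2 \notin U_{i_1}$ then $\gamma_{i_1}^{(2)} = 0$ while $\gamma_{i_1}^{(1)} = 1$, contradicting proportionality), and (ii) that in a common chart, equality of all coordinates $(g_i/g_{i_0})(\bar y_1) = (g_i/g_{i_0})(\bar y_2)$ identifies the two points. A secondary technical matter is the existence of analytic arcs with specified starting points on $NB_I(X)$, furnished by the Curve Selection Lemma for reduced analytic spaces.
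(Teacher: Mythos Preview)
Your argument is essentially correct and runs parallel to the paper's proof, but the packaging differs and there are two points that need attention.

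\textbf{Comparison with the paper.} The paper works directly with the map $B_J(X)\to B_I(X)$, $J=(I,h)$: in the forward direction it uses that $h\in{}^*I$ makes this map a homeomorphism, splits into the same two cases (lifts of $\phi_1,\phi_2$ to $B_I(X)$ land at equal or distinct points), and in the ``equal'' case uses the homeomorphism to conclude the $(h,g)$--truncations are dependent; in the converse it shows the map is a homeomorphism on the fiber over $0$ and finishes with Proposition~\ref{proposition homeo to subintegral}. You instead route everything through Theorem~\ref{local anal char of *I} and the function $h/g_{i_0}$ on $WNB_I(X)$. Your proportionality lemma is exactly the paper's case distinction, rephrased in coordinates; the underlying linear algebra on leading coefficients is the same.

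\textbf{Two gaps.} First, the sentence ``$\Delta(I)\subset 2I$ gives $h\in\overline{I}$'' is too cryptic; you should spell out the test curve. The paper takes $\phi=(\phi_1,0)$ (constant second component), for which $\phi_2^*(I)=0$, so the relative closure condition in the first coordinate reads $h\circ\phi_1\in\phi_1^*(I)+\m_1\phi_1^*(I)=\phi_1^*(I)$; as $\phi_1$ is arbitrary this gives $h\in\overline{I}$.

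Second, and more substantive: in your converse you assert that by Theorem~\ref{local anal char of *I} it suffices to show $(h/g_{i_0})(y_1)=(h/g_{i_0})(y_2)$ for all $y_1,y_2\in NB_I(X)$ with common image in $WNB_I(X)$, and then you verify this using arcs $\phi_j=\pi\circ\tilde\phi_j$. But the relative closure hypothesis only applies to curves with $\phi_j(0)=x$, so your verification only covers pairs $y_1,y_2$ lying over $x$. The membership $h/g_{i_0}\in\cO_{WNB_I(X),\bar y}$ is a germ condition and requires constancy on \emph{all} nearby fibers of $NB_I(X)\to B_I(X)$, including those over points $q$ with $\pi(q)\ne x$. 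You need to observe that this is automatic: since $I$ has finite colength, at any $x'\ne x$ the ideal $I$ is the unit ideal, so on the chart $U_{i_0}$ the generator $g_{i_0}$ is a unit near such $q$, and $h/g_{i_0}$ already lies in $\cO_{B_I(X),q}$, hence is trivially constant on the normalization fiber. With this remark your argument closes; the paper avoids the issue by phrasing the target as ``$B_J(X)\to B_I(X)$ is a homeomorphism,'' which away from the fiber over $0$ is immediate from finiteness and the finite colength hypothesis.
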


\begin{proof} Suppose $h \in {}^*I$. Let $J$ denote the ideal generated by $(I,h)$. The assumption $h \in {}^*I$ implies that 
$B_J(X)$ is homeomorphic to $B_I(X)$ by the projection map. Suppose $\phi$ is a curve on $X\times X$ with components $\phi_1$ and $\phi_2$. Pick a set of generators $g_1, \ldots, g_k$
of $I$  and compose $\Delta(g_i)$ with  $\phi$; if there are $k$ generators, think of this as $k$ column vectors with two entries. 

Working mod $m_1 2I$, we can drop the terms of the first row of degree higher than the order of $\phi^*_1(I)$.  We can also truncate $g\circ\phi_2$, dropping terms of degree greater than the order of $\phi^*_2(I)$. Denote the truncated $k$-tuples by $(g\circ\phi_i)_T$.

There are now two cases.

Case 1: Suppose the lifts of $\phi_1$ and $\phi_2$ to $B_I(X)$, lift to different points over $0$. This is true if and only if the homogenous $k$-tuples $(g\circ\phi_i)_T$ are linearly independent. In turn, this means that the module 
$\phi^*(\Delta(I))+m_1\phi^*(2I)$ is equal to $\phi^*(2I)$. Hence $\Delta(h)\circ\phi\in \phi^*(\Delta(I))+m_1\phi^*(2I)$.

Case 2: Suppose the lifts of $\phi_1$ and $\phi_2$ to $B_I(X)$, lift to the same point over $0$. Then since $B_K(X)$ is homeomorphic to $B_I(X)$, it follows that the lifts to  $B_K(X)$ must lift to the same point in the fiber over $0$.
These assumptions imply that the tuples $(g\circ\phi_i)_T$ are linearly dependent, and the tuples  $((h,g)\circ\phi_i)_T$ are linearly dependent. Think of the last two tuples as the rows of a matrix, which evidently has rank 1. Then the column rank of the matrix must also be 1. Since $h\in \overline{I}$, it follows that the order of $h\circ\phi_i$ is no less than the order of $(g\circ\phi_i)_T$. Hence the $h$ column can be written in terms of the $g$ columns which implies the result.

Now suppose $\Delta(h)\in \overline{\Delta(I)_{2I}}$. First note that $h\in \overline{I}$. To see this just take $\phi=(\phi_1,0)$, where 
$\phi_1$ is arbitrary. In this case the condition of relative closure boils down to $h\circ\phi_1\in \phi_1^*(I)+m_1\phi_1^*(I)$ and it follows by Nakayama's lemma that $h\circ\phi_1\in \phi_1^*(I)$ which implies $h\in \overline{I}$.

This implies that $B_K(X)$ is finite over  $B_I(X)$ by the projection map, and since $I$ has finite colength, the projection is a homeomorphism, except possibly  when restricted to the fiber over $0$.

Suppose the map is not a homeomorphism; then we can find curves $\phi_1$ and $\phi_2$ such that the lifts of $\phi_1$ and $\phi_2$ to $B_I(X)$, lift to the same point over $0$, but the lifts to $B_K(X)$ lift to different points. 

Truncating as before, this implies the tuples $(g\circ\phi_i)_T$ are linearly dependent, while the tuples  $((h,g)\circ\phi_i)_T$ are linearly independent.  This of course means that it is impossible to write the $h$ column in terms of the $g$ columns, which contradicts the hypothesis. Since $I$ is 0-dimensional, it is regular. Then the conclusion of proof, showing $R(I+(h))$ is a subintegral extension of $R(I)$,follows from  Proposition \ref {proposition homeo to subintegral}.
\end{proof}

The algebraic analog of the valuative criterion for the local ring of a complex algebraic variety follows.  The proof unfolds in pretty much the same fashion.  First we need some notation.

Let $(A,\m, k)$ be the local ring of an algebraic variety over an algebraically closed field $k$ of characteristic 0 and $I$ be a 0-dimensional ideal in 
$A$.   
Let $B = A \otimes _{k} A$ and let $\lambda_i \colon A \to B$ denote the natural maps defined by $\lambda_1(a) = a \otimes 1$ and $\lambda_2(a) = 1 \otimes a$.  Consider the submodule of $B^2$ generated by $\lambda_1(I)B \oplus \lambda_2(I)B$; we will denote this submodule by $2I$.   Let $\Delta \colon A \to B^2$ be defined by $\Delta(a) = (\lambda_1(a), \lambda_2(a))$.

%: val crit alg
\begin{theorem}  \label{val crit alg} Let $(A,\m, k)$ be the local ring of an algebraic variety over an algebraically closed field $k$ of characteristic 0,  $I$  a 0-dimensional ideal in $A$, and 
$h\in A$.  With notation as above,  $h\in {}^*I$ if and only if $\Delta(h)\in \overline{\Delta(I)_{2I}}$.
\end{theorem}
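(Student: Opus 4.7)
The strategy is to imitate the proof of Theorem~\ref{al crit anal}, substituting local homomorphisms of $k$-algebras $\rho\colon B \to k[[z]]$ for holomorphic curve germs $(\IC,0)\to (X\times X,(x,x))$. Each such $\rho$ decomposes as a pair $\rho_i := \rho\circ\lambda_i\colon A\to k[[z]]$, which serve as the algebraic replacements for the two component curves $\phi_1,\phi_2$. The algebraic analogues of the essential analytic tools are already in place: Proposition~\ref{alg curve crit} supplies the valuative criterion for integral closure of ideals, and Lemma~\ref{lem homeo to subintegral} supplies the fact that a homeomorphism of blow-ups forces weak subintegrality. The quantitative data attached to $\rho$ that will be used are the orders $r_i := o(\rho_i(I))$ together with the initial vectors in $k^k$ whose $j$th entries are the coefficients of $z^{r_i}$ in $\rho_i(g_j)$, for a fixed generating set $g_1,\dots,g_k$ of $I$.

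For the forward direction, assume $h\in {}^*I$ and set $K=(I,h)$. By Theorem~\ref{local alg char of *I} the map $\proj(A[Kt])\to\proj(A[It])$ is a bijection over $\m$. Given any local homomorphism $\rho\colon B\to k[[z]]$, we would split into two cases parallel to the analytic proof. If the two initial vectors attached to $\rho_1$ and $\rho_2$ are linearly independent in $k^k$, then after truncating $k[[z]]^2$ at the appropriate degree the combined span of $\rho^{(2)}(\Delta(I))$ and $z\rho^{(2)}(2I)$ already exhausts $\rho^{(2)}(2I)$ modulo higher order, so $\rho^{(2)}(\Delta(h))$ lies in $\rho^{(2)}(\Delta(I))k[[z]]+z\rho^{(2)}(2I)k[[z]]$ automatically, using ${}^*I\subseteq\overline{I}$. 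If the two initial vectors are proportional, then $\rho_1$ and $\rho_2$ determine the same point of the fiber of $\proj(A[It])$ over $\m$; the bijection property forces the lifts to $\proj(A[Kt])$ to coincide as well, and this extra coincidence yields the linear relation needed to express $\rho^{(2)}(\Delta(h))$ as a $k[[z]]$-combination of the $\rho^{(2)}(\Delta(g_j))$ modulo $z\rho^{(2)}(2I)k[[z]]$.

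For the converse, assume $\Delta(h)\in\overline{\Delta(I)_{2I}}$. First, take $\rho_2$ to be the composition $A\twoheadrightarrow A/\m = k\hookrightarrow k[[z]]$ and $\rho_1$ arbitrary. Since $\rho_2(I)=0$, the hypothesis collapses to $\rho_1(h)\in\rho_1(I)k[[z]]$ for every local homomorphism $\rho_1$, which by Proposition~\ref{alg curve crit} yields $h\in\overline{I}$. Hence $R=A[It]\subseteq S=A[It,ht]$ is a finite integral extension. It then remains to check that $\proj(S)\to\proj(R)$ is a bijection over $\m$, after which Lemma~\ref{lem homeo to subintegral} finishes the proof. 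Suppose for contradiction there exist distinct $Q_1,Q_2\in\proj(S)$ lying over a common $\q\in\proj(R)$ with $\q\cap A=\m$. Applying Lemma~\ref{Boger} inside the normalizations of $S_{Q_1}$ and $S_{Q_2}$, we would produce local homomorphisms $\rho_i\colon A\to k[[z]]$ centered at the $Q_i$ whose initial data on the generators of $I$ are proportional (so the lifts to $\proj(R)$ agree at $\q$) but whose leading terms on $h$ break this proportionality. Assembling $\rho_1$ and $\rho_2$ into $\rho\colon B\to k[[z]]$ then produces a witness violating the hypothesis, giving the desired contradiction.

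The main obstacle will be this last construction: synchronizing $\rho_1$ and $\rho_2$ so that their initial coefficients on the generators of $I$ are honestly proportional (not merely proportional after further truncation), while their values on $h$ are forced to differ in leading order modulo $z\rho^{(2)}(2I)$. This synchronization between the two component maps is exactly what distinguishes the pair-valued relative closure from the single-variable integral-closure criterion of Proposition~\ref{alg curve crit}, and it is the step at which the algebraic version genuinely needs something beyond a direct transport of the analytic argument, requiring a careful choice of discrete valuations centered at $Q_1$ and $Q_2$ over $\q$.
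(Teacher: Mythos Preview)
Your overall architecture matches the paper's proof closely: the same decomposition of a local homomorphism $\rho\colon B\to k[[z]]$ into components $\rho_1,\rho_2$, the same two-case split in the forward direction according to whether the initial vectors of $\rho_1(g)$ and $\rho_2(g)$ are proportional, the same reduction $h\in\overline{I}$ via the degenerate choice $\rho_2\colon A\to A/\m\hookrightarrow k[[z]]$, and the same endgame via Lemma~\ref{lem homeo to subintegral}.

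Where you diverge from the paper is in the final contradiction step, and here you have both misidentified the obstacle and reached for the wrong tool. The ``synchronization'' you worry about is automatic, not an obstacle. The paper does not work with primes $Q_1,Q_2$ of $S=A[It,ht]$ and then try to force agreement downstairs; rather it passes to an affine chart: pick a generator $g$ of $I$ and two distinct maximal ideals $\n_1,\n_2$ of $A[J/g]$ contracting to a common maximal ideal $\n$ of $A[I/g]$. Any local homomorphism $\rho_i\colon A[J/g]\to k[[z]]$ with $\rho_i^{-1}(zk[[z]])=\n_i$ automatically has its initial vector on $(g_1,\dots,g_\ell)$ determined, up to scalar, by the residues of $g_j/g$ in $A[I/g]/\n=k$; since both $\n_i$ lie over $\n$, these residues coincide, so proportionality of the initial $I$-data is forced. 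Likewise $\n_1\ne\n_2$ forces the residues of $h/g$ to differ, which is exactly the independence of the augmented vectors you need.

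Constructing such $\rho_i$ does not use Lemma~\ref{Boger}; that lemma separates a specific pair $a,b$ and gives no control over the center. The paper instead takes a height-one prime $\q_i\subset\n_i$ in $A[J/g]$, passes to $(A[J/g]/\q_i)_{\n_i}$, normalizes, completes, and kills a minimal prime to land in $k[[z]]$. This is the standard ``choose a curve through a given smooth point of the exceptional divisor'' maneuver, and it delivers a $\rho_i$ centered exactly at $\n_i$. Once you see that the center determines the initial data, the step you flagged as the main obstacle dissolves.
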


\begin{proof}  Let $X = \spec(A)$ and let $x$ denote the closed point corresponding to $\m$.  If $\dim (A) = 0$ there is  nothing to prove.  So assume that $\dim (A) > 0$. 
Suppose $h \in {}^*I$. Let $J$ denote the ideal generated by $(I,h)$. The assumption $h \in {}^*I$ implies that  $A[It] \subset A[Jt]$ is a weakly subintegral extension and hence
$B_J(X)$ is homeomorphic to $B_I(X)$ by the induced map. Suppose $\rho \colon A \otimes_k A \to k [[z]]$  is a local homomorphism of $k$-algebras and let $\rho_i \colon A \to k [[z]]$ be the compositions with the natural maps 
$\lambda_i: A \to A \otimes_k A$.  Pick a set of regular generators $g_1, \ldots, g_{\ell}$
of $I$  and look at their images in $\rho(I)k[[z]]$; since there are $\ell$ generators, think of this as a $\ell$-tuple $\rho_1(g)$. 
Working mod $z\rho_1(I)k[[z]]$, the terms of  $\rho_1(g)$ of degree higher than the order of $\rho_1(I)k[[z]]$ become 0. Denote the image modulo $z \rho_1(I)k[[z]]$ of the $\ell$-tuple by $\rho_1(g)_T$. We can also ``truncate" $\rho_2(g)$, by reading modulo $z \rho_2(I)k[[z]]$; denote this image by $\rho_2(g)_T$.  Let $\rho(g)_T$ denote the $2 \times \ell$ matrix with rows $\rho_i(g)_T$.

There are now two cases.

Case 1:  The map $\rho_1$ has a unique extension to a map from $A[I/g_1]$ to $k[[z]]$, where we have reindexed $g_1, \ldots , g_{\ell}$ so that $\rho_1(I)k[[z]] = \rho_1(g_1)k[[z]] $.  In turn we can extend  $\rho_1$ to a map $\widetilde{\rho_1}$ on the Rees algebra $A[It]$ by setting $\widetilde{\rho_1}(g_1t) = 1$ and $\widetilde{\rho_1}(g_jt) = \rho_1(g_j)/\rho_1(g_1) $. This is well defined.

Suppose that $\rho_1(g_j) = (a_j + zg'_j)z^{e_j} $, where $a_j \in k, g'_j \in k[[z]], e_j \in \N (\; j = 1, \ldots ,  \ell)$.  Notice that $\widetilde{\rho_1}^{-1}(zk[[z]]) = \m + (a_jg_1t-a_1g_jt \mid e_j = e_1) + (g_jt \mid e_j > e_1)$.  Additionally, we see that  $\rho_1(g)_T \cong (a_1z^{e_1}, \ldots , a_{\ell}z^{e_{\ell}}) \cong z^{e}(\delta_{e_1 e_1}a_1, \ldots, \delta_{e_{\ell} e_1}a_{\ell}) \pmod{ z\rho_1(I) k[[z]]}$, where $e = e_1$.   Something similar holds for $\rho_2$ resulting in $\rho_2(g)_T \cong z^f(\delta_{f_1 f}b_1 , \ldots , \delta_{f_{\ell} f}b_{\ell})$.

Suppose that $\widetilde{\rho_1}^{-1}(z k[[z]])  \ne  \widetilde{\rho_2}^{-1}(z k[[z]]) $.
This is true if and only if the $\ell$-tuples of complex numbers $z^{-e}\rho_1(g)_T$ and $z^{-f}\rho_2(g)_T$ are linearly independent. In turn, this means that the module 
$\rho^{(2)}(\Delta(I)) k[[z]]+z\rho^{(2)}(2I)k[[z]]$ is equal to $\rho^{(2)}(2I) k[[z]]$.   Hence $\rho^{(2)}(\Delta(h))  \in \rho^{(2)}(\Delta(I) k[[z]]+z\rho^{(2)}(2I) k[[z]]$.

Case 2: Suppose that $\widetilde{\rho_1}^{-1}(zk[[z]])  = \widetilde{ \rho_2}^{-1}(zk[[z]]) $.  Since $A[It] \subset A[Jt]$ is a weakly subintegral extension, if we  extend each map $\rho_i$ $A[Jt]$ in stages, as above, then the contractions of $zk[[z]]$ to $A[Jt]$ via $\rho_1$ and $\rho_2$ must be equal.  
This implies that the matrices $(z^{-e}\rho_1(g)_T,z^{-f}\rho_2(g)_T) $ and $(z^{-e}\rho_1((h,g))_T, z^{-f}\rho_2((h,g))_T)$ have row rank 1. Then the column rank of the matrix must also be 1. Recall that  $h\in \overline{I}$, implies that the order of $z$ in  $\rho_i( h)$ is no less than the order in $\rho_i(g)$. Hence the $h$ column can be written in terms of the $g$ columns which implies the result.

Now suppose $\Delta(h)\in \overline{\Delta(I)_{2I}}$. First note that $h\in \overline{I}$ by Proposition \ref{alg curve crit}. To see this just take $\rho=(\rho_1,\eta)$, where 
$\rho_1$ is arbitrary and $\eta \colon A \to A/\m \to k[[z]]$ is the composition of the natural maps. In this case the condition of relative closure boils down to $\rho_1(h) \in \rho_1(I)k[[z]] + z\rho_1(I)k[[z]] = \rho_1(I)k[[z]] $ and hence  $h\in \overline{I}$.

This implies that $B_J(X)$ is finite over  $B_I(X)$ by the projection map, and since $I$ is 0-dimensional, the projection is a homeomorphism, except possibly  when restricted to the fiber over $0$.

Suppose the map is not a homeomorphism.  Then there is a closed point in $B_I(X)$, lying over $x \in X$, with two preimages in $B_J(X)$.  Hence there is a generator $g$ of $I$ and two maximal ideals $\n_1$ and $\n_2$ of $A[J/g]$ that contract to the same maximal ideal $\n$ of $A[I/g]$.   

We thus have $k$-algebra maps $\rho_i \colon A[J/g] \to k [[z]] \; (i = 1,2)$ such that $\rho_i^{-1}(zk[[z]]) = \n_i \; (i=1,2)$.
Here are the details on the finding the maps. First take height one prime ideals $\q_i \subset A[J/g]$ contained in $\n_i$.  Mod out by $\q_i$, localize at $\n_i$, normalize, complete, and then take an analytic branch.  Using the composition of the map from $A[J/g]$ to the analytic branch does the job.

With notation as above, this implies the rows of $(z^{-e}\rho_1(g)_T, z^{-f}\rho_2(g)_T)$ are linearly dependent, whereas the rows of  $(z^{-e}\rho_1(h,g)_T, z^{-f}\rho_2(h,g)_T)$  are linearly independent.  This of course means that it is impossible to write the $h$ column in terms of the $g$ columns, which contradicts the hypothesis.

Hence the blow-ups are homeomorphic.  Hence the extension $R := A[It] \subset R^{\sharp} : =A[Jt]$ is weakly 
subintegral by Lemma \ref{lem homeo to subintegral}. 
\end{proof}

 If the ideal $I$ is 0-dimensional,
this criterion is easy to work with because you only need to work at one point. The above theorem holds when $I$ is not 0-dimensional, but one must assume that $\Delta(h)\in \overline{\Delta(I)_{2I}}$ holds at every point of the diagonal of $V(I)\times V(I)$.

%: bibliography

\end{document}